\newcommand{\ds}{\displaystyle}
\newcommand{\NN}{\mathbb N}
\newcommand{\CC}{\mathbb C}
\newcommand{\RR}{\mathbb R}
\newcommand{\ZZ}{\mathbb Z}
\newcommand{\EE}{\mathcal E}
\newcommand{\DD}{\mathcal D}
\newcommand{\SSS}{\mathcal S}
\newcommand{\ssum}{\mbox{$\sum_j\,$}}
\newtheorem{theorem}{Theorem}[section]
\newtheorem{proposition}[theorem]{Proposition}
\newtheorem{lemma}[theorem]{Lemma}
\newtheorem{corollary}[theorem]{Corollary}
\theoremstyle{remark}
\newtheorem{remark}[theorem]{Remark}
\theoremstyle{definition}
\newtheorem{definition}[theorem]{Definition}
\newtheorem{example}[theorem]{Example}
\numberwithin{equation}{section}
\newcommand{\beq}{\begin{eqnarray}}
\newcommand{\eeq}{\end{eqnarray}}
\newcommand{\beqs}{\begin{eqnarray*}}
\newcommand{\eeqs}{\end{eqnarray*}}
\newcommand{\Op}{\mathrm{Op}}
\begin{document}

\author[S. Pilipovi\' c]{Stevan Pilipovi\' c}
\thanks{The work of S. Pilipovi\'c was supported by the Ministry of Sciences of Serbia, project 174024, and by the Serbian Academy of Sciences and Arts F10}
\address{Department of Mathematics and Informatics,
University of Novi Sad, Trg Dositeja Obradovi\'{c}a 4, 21000 Novi Sad, Serbia}
\email{stevan.pilipovic@dmi.uns.ac.rs}

\author[B. Prangoski]{Bojan Prangoski}
\thanks{The work of B. Prangoski was partially supported by the bilateral project ``Microlocal analysis and applications'' funded by the Macedonian and Serbian academies of sciences and arts.}
\address{Department of Mathematics, Faculty of Mechanical
Engineering, Ss. Cyril and Methodius University in Skopje, Karposh 2 b.b., 1000 Skopje, Macedonia}
\email{bprangoski@yahoo.com}

\author[J. Vindas]{Jasson Vindas}
\thanks{The work of J. Vindas was supported by Ghent University, through the BOF-grants 01J11615 and 01J04017.}
\address{Department of Mathematics: Analysis, Logic and Discrete Mathematics, Ghent University, Krijgslaan 281,
9000 Ghent, Belgium}
\email{jasson.vindas@UGent.be}

\title[Infinite order $\Psi$DOs]{Infinite order $\Psi$DOs: Composition with entire functions, new Shubin-Sobolev spaces, and index theorem}

\keywords{Infinite order pseudo-differential operators, Shubin type operators, power series of operators, Gelfand-Shilov regularity, Shubin-Sobolev spaces, index theorems}

\subjclass[2010]{35S05, 47G30, 46E35, 35P20, 46F05}

\frenchspacing

\begin{abstract}
	We study global regularity and spectral properties of power series of the Weyl quantisation $a^w$, where  $a(x,\xi) $ is a classical elliptic Shubin polynomial. For a suitable entire function $P$, we associate two natural infinite order operators to $a^{w}$,  $P(a^w)$ and  $(P\circ a)^{w},$ and prove that these operators and their lower order perturbations are globally Gelfand-Shilov regular. They have spectra consisting of real isolated eigenvalues diverging to $\infty$ for which we find the asymptotic behaviour of their eigenvalue counting function. In the second part of the article, we introduce Shubin-Sobolev type spaces by means of $f$-$\Gamma^{*,\infty}_{A_p,\rho}$-elliptic symbols, where $f $ is a function of ultrapolynomial growth and
	$\Gamma^{*,\infty}_{A_p,\rho}$ is a class of symbols of infinite order studied in this and our previous  papers.
	We study the regularity properties of these spaces, and show that the pseudo-differential operators under consideration are Fredholm operators on them. Their indices are independent on the order of the Shubin-Sobolev spaces; finally, we show that the index can be expressed via a Fedosov-H\" ormander integral formula.
\end{abstract}
\maketitle

\section{Introduction}

The Wigner-Weyl quantisation of quantum mechanics relates functions on the time frequency space and the pseudo-differential operators on various Sobolev spaces. The spectral analysis with the Weyl formula involved in this theory and the whole framework of quantisation are enormously extended in an extensive literature, part of which is given in the bibliography. In this article we continue with the extension of the framework and obtain a number of new results which may be relevant for the general concept of quantisation.

The aim of this article is twofold. On the one hand, we study the global regularity and spectral properties of operators defined via Weyl quantisation of power series of classical elliptic Shubin polynomials \cite{NR,Shubin}. Operators having such a form can be thought of as pseudo-differential operators of infinite order. On the other hand, our second goal is to introduce a new scale of infinite order Shubin-Sobolev type spaces describing global regularity properties of solutions of elliptic pseudo-differential equations in the infinite order context.

The determination of spectral properties for infinite order pseudo-differential operators has attracted some recent attention and there are several contributions in the literature devoted to particular instances \cite{KM,LST,TF}, usually arising from mathematical physics problems. We restrict our attention here to compositions of Shubin polynomials with entire functions of sub-exponential growth as well as their lower order perturbations. Our goal is then to prove global hypoellipticity and obtain Weyl asymptotic formulae for the operators associated to a large class of such entire functions. We also mention that in \cite{HelfferRobert} the composition of a function and a pseudo-differential operator is treated in a completely different way in comparison with the approach we shall employ in this article. In fact, the symbolic calculus developed in \cite{CPP,CPP1,PP,PP1,PPV-BMS,BojanP} (see also \cite{c1,c2}) provides a tool for studying the power series of Shubin operators under consideration.

The $\Psi$DO-calculus tools and the corresponding infinite order symbol classes, denoted as $\Gamma^{*,\infty}_{A_p,\rho}$, needed in this work are explained in the preliminary Section \ref{section preli}. The natural functional analytic framework for these symbol classes is the (generalised) Gelfand-Shilov spaces and their duals, the spaces of tempered ultradistributions. The symbols from the $\Gamma^{*,\infty}_{A_p,\rho}$-classes are allowed to grow sub-exponentially (i.e., ultrapolynomially), going beyond the classical Weyl-H\"ormander calculus. This gives us a way of fitting power series of elliptic Shubin differential operators within the $\Psi$DO-calculus for the $\Gamma^{*,\infty}_{A_p,\rho}$-classes, which, in turn, provides an effective device for answering questions about their global regularity properties. Indeed, we will prove that such operators and their ``lower order'' perturbations are hypoelliptic in $\Gamma^{*,\infty}_{A_p,\rho}$ sense under certain condition on the coefficients in the power series, which yields global regularity in Gelfand-Shilov sense. This is of independent interest since recently establishing Gelfand-Shilov, and Gevrey regularity in general, arises as an important problem in fluid dynamics (in studying the Gelfand-Shilov and Gevrey regularity of solutions of the Boltzmann equation); see for example \cite{BHRV,DFT,LMPSX,ZY} and the references therein.

As we have recently shown in \cite{PPV-BMS} and we explain in Subsection \ref{spec1}, $\Gamma^{*,\infty}_{A_p,\rho}$-hypoellipticity, plus suitable elliptic type bounds on the symbols, plays an important role for the validity of Weyl asymptotic formulae in the context of infinity order $\Psi$DOs. In order to motivate our results from Section \ref{powerseriesofop}, let $H=|x|^2-\Delta$ be the harmonic oscillator and consider the infinite order operator $$A=\sum_{n=0}^{\infty}\frac{h^nH^{n}}{n^{2sn}}+\mbox{``lower order pertubations''},$$ where $h>0$ and $s>1$ are suitable constants. Then our results immediately imply that $A$ is $\Gamma^{*,\infty}_{A_p,\rho}$-hypoelliptic and hence globally Gelfand-Shilov regular. Furthermore, writing $N(\lambda)$ for the spectral counting function of $A$, it will follow from our analysis that
$$
N(\lambda) \sim \frac{ e^{2ds}}{h^{d}s^{2ds} 2^{d(2s+1)} d!}\:(\ln \lambda)^{2ds}, \quad \lambda\to\infty.
$$
\indent The second part of this article is devoted to the introduction of infinite order Shubin-Sobolev spaces as an effective apparatus for fine-scale measurement of the Gelfand-Shilov regularity of solutions of elliptic equations. The results on the spectral properties (and spectral asymptotics) of power series of Shubin differential operators that we obtain in Section \ref{powerseriesofop} will play a key role in establishing the ``degree'' of Gelfand-Shilov regularity of the elements of the Shubin-Sobolev spaces we introduce in Section \ref{Elliptic Shubin} (see Proposition \ref{ktr991101} and Corollary \ref{corollaryontheregofsob}). The elliptic operators in our class are Fredholm when acting on appropriate Shubin-Sobolev spaces and, employing a classical argument, one derives that their indices are independent of the order of the Shubin-Sobolev spaces. We also prove that this remains true for matrix valued $\Psi$DOs of infinite order.

Furthermore, we prove in Section \ref{integral formula index} that the index of such operator with symbol $\mathbf{a}$ is given (up to a constant) by the integral of the form $\mathrm{tr}\,(\mathbf{a}^{-1}d\mathbf{a})^{2d-1}$ over the boundary of a large enough ball outside of which $\mathbf{a}$ is invertible. This expresses the index of the operator by only using its symbol and coincides with the Fedosov-H\"ormander integral formula for finite order operators (see \cite{fedosov1,fedosov2,hor-1,pil-pra1}).\footnote{In fact, Fedosov \cite[p. 312]{fedosov3} points out that a formula of this type was previously presented by Dynin on a conference but was never published.} Of course, this also agrees with the Atiyah-Singer index theorem \cite{as3} (see \cite{fedosov3} for its ``translation'' into the language of differential forms).

\section{Preliminaries}\label{section preli}

\subsection{Notation}\label{not}

As standard, $\langle x\rangle $ stands for $(1+|x|^2)^{1/2}$, $x\in\RR^d$. When $\alpha\in\NN^d$ we use the notation $D^{\alpha}=D^{\alpha_1}_1\ldots D^{\alpha_d}_d$, where $D^{\alpha_j}_j=i^{-|\alpha_j|}\partial^{\alpha_j}/\partial x_j^{\alpha_j}$. Following \cite{Komatsu1} (see also \cite{Petzsche88}), let $M_p$, $p\in\NN$, be a weight sequence of positive numbers which satisfies some of the conditions $(M.1)$, $(M.2)$, $(M.3)$, $(M.3)'$; we always assume $M_0=1$. Often we will impose the following additional condition:
 $(M.4)$ $M_p^2/p!^2\leq (M_{p-1}/(p-1)!)\cdot (M_{p+1}/(p+1)!)$, $p\in\ZZ_+$.
The best example of a sequence satisfying all of the above conditions is  $p!^s$, $p\in\NN$, with $s>1$. If $M_p$ and $\tilde{M}_p$ are two weight sequences, then the notation $M_p\subset \tilde{M}_p$ means that there are $C,L>0$ such that $M_p\leq CL^p\tilde{M}_p$, $\forall p\in \NN$, while $M_p\prec\tilde{M}_p$ means that this inequality holds true for each $L>0$ and a corresponding $C=C_L>0$. We will use the notation $M_{\alpha}$ for $M_{|\alpha|}$, where $\alpha\in\NN^d$, $|\alpha|=\alpha_1+...+\alpha_d$. Next (see \cite[Section 3]{Komatsu1}), $m_p=M_p/M_{p-1}$, $p\in\ZZ_+$, and the associated function to $M_p$ is $M(\rho)=\sup_{p\in\NN}\ln_+ \rho^{p}/M_{p}$, $\rho > 0$. When $M_p=p!^{s}$, with $s>0$,  we have $M(\rho)\asymp \rho^{1/s}$.\\
\indent Let $K$ be a regular compact subset of an open set $U\subseteq\RR^d$ and $h>0$. Then $\EE^{\{M_p\},h}(K)$ is the
Banach space ($(B)$-space) of all
$\varphi\in C^{\infty}(\mathrm{int}\, K)$ whose derivatives extend to continuous functions on $K$ and satisfy
$\sup_{\alpha\in\NN^d}\sup_{x\in
K}|D^{\alpha}\varphi(x)|/(h^{\alpha}M_{\alpha})<\infty$; $\DD^{\{M_p\},h}_K$ denotes its subspace consisting of the functions supported by $K$. We work with the locally convex spaces (l.c.s.)
$
\EE^{(M_p)}(U),$
 $\EE^{\{M_p\}}(U),$
 $\DD^{(M_p)}(U),$
 $\DD^{\{M_p\}}(U)$ and their strong duals, the corresponding spaces of ultradistributions  of Beurling and Roumieu type, cf. \cite{Komatsu1,Komatsu2,Komatsu3}.\\
\indent We denote by $\mathfrak{R}$ the set of all positive sequences which monotonically increase to infinity. It is a directed set under the partial order defined by $(r_p)\leq (k_p)$ if $r_p\leq k_p$, $\forall p\in\ZZ_+$.
 Let $(r_p)\in\mathfrak{R}$,  $N_0=1$, $N_p=M_p\prod_{j=1}^{p}r_j$, $p\in\ZZ_+$. Then, $N_p$  satisfies $(M.1)$ and $(M.3)'$ when $M_p$ does so and its associated function will be denoted by $N_{r_p}(\rho)$, i.e. $N_{r_{p}}(\rho)=\sup_{p\in\NN} \ln_+ \rho^{p}/(M_p\prod_{j=1}^{p}r_j)$, $\rho > 0$.\footnote{Here and throughout the rest of the article we use the principle of vacuous (empty) product, i.e. $\prod_{j=1}^0 r_j=1$.} Note that for $(r_{p})\in\mathfrak{R}$ and $k > 0 $ there is $\rho _{0} > 0$ such that $N_{r_{p}} (\rho ) \leq M(k \rho )$, for $\rho > \rho _{0}$.\\
\indent A measurable function $f$ on $\RR^d$ is said to have
ultrapolynomial growth of class $(M_p)$ (resp. of class $\{M_p\}$)
if $\|e^{-M(h|\cdot|)}f\|_{L^{\infty}(\RR^d)}<\infty$ for some
$h>0$ (resp. for every $h>0$). It is useful  to have in mind the following fact (cf. \cite[Lemma 4.5]{DVV} and \cite[Lemma 2.1]{PP1}):
 Let $B\subseteq C(\RR^d)$. Then,
 $$(\forall h>0)(\exists C>0)(|f(x)|\leq Ce^{M(h|x|)},x\in \mathbb R^d,f\in B )\Leftrightarrow$$
 $$(\exists(r_p)\in\mathfrak{R})(\exists C>0)(|f(x)|\leq Ce^{N_{r_p}(|x|)}, x\in\RR^d, f\in B).$$
 \indent An entire function $P(z) =\sum _{\alpha \in \NN^d}c_{\alpha } z^{\alpha}$, $z \in \CC^d$, is called an ultrapolynomial of class $(M_{p})$ (resp. of class $\{M_{p}\}$), if $c_{\alpha }$ satisfy: $|c_{\alpha }|  \leq C L^{|\alpha| }/M_{\alpha}$, $\alpha \in \NN^d$, for some $L > 0$ and $C>0$ (resp. for every $L > 0 $ and some $C=C(L) > 0$). The corresponding operator $P(D)=\sum_{\alpha} c_{\alpha}D^{\alpha}$ is called an ultradifferential operator of  class $(M_{p})$ (resp. of class $\{M_{p}\}$) and, when $M_p$ satisfies $(M.2)$, it acts continuously on $\EE^{(M_p)}(U)$ and $\DD^{(M_p)}(U)$ (resp. on $\EE^{\{M_p\}}(U)$ and $\DD^{\{M_p\}}(U)$) and on the corresponding duals.\\
\indent If $M_p$ satisfies $(M.1)$ and $(M.3)'$, for $m>0$, we
denote by $\SSS^{M_p,m}_{\infty}(\RR^d)$ the $(B)$-space of all
$\varphi\in C^{\infty}(\RR^d)$ for which the norm
 $\sup_{\alpha\in
\NN^d}m^{|\alpha|}\|e^{M(m|\cdot|)}D^{\alpha}\varphi\|_{L^{\infty}(\RR^d)}/M_{\alpha}$
is finite. The spaces of sub-exponentially decreasing
ultradifferentiable function of Beurling and Roumieu type are
defined as
 $$
\SSS^{(M_{p})}(\RR^d)=\lim_{\substack{\longleftarrow\\
m\rightarrow\infty}}\SSS^{M_{p},m}_{\infty}\left(\RR^d\right)\quad
\mbox{and}\quad
\SSS^{\{M_{p}\}}(\RR^d)=\lim_{\substack{\longrightarrow\\
m\rightarrow 0}}\SSS^{M_{p},m}_{\infty}\left(\RR^d\right).
$$
Their strong duals $\SSS'^{(M_{p})}(\RR^d)$ and $\SSS'^{\{M_{p}\}}(\RR^d)$ are the spaces of tempered ultradistributions of Beurling and Roumieu type, respectively. When $M_p=p!^{s}$, $s>1$, the Roumieu space is the well-known Gelfand-Shilov space $\SSS^{s}_{s}(\RR^d)$ \cite{NR}. If $M_p$ additionally satisfies $(M.2)$, then the ultradifferential operators of class $*$ (we use $*$ as a common notation for the Beurling and Roumieu case; cf. \cite{Komatsu1}) act continuously on $\SSS^*(\RR^d)$ and $\SSS'^*(\RR^d)$; these spaces are nuclear and the Fourier transform is a topological isomorphism on them (\cite{PilipovicK,PilipovicU,PPV-JMPA}). Moreover, the space $\SSS^{\{M_p\}}(\RR^d)$ is topologically isomorphic to $\ds\lim_{\substack{\longleftarrow\\ (r_p)\in\mathfrak{R}}}\SSS^{M_p, (r_p)}_{\infty}(\RR^d)$, where the projective limit is taken with respect to the natural order on $\mathfrak{R}$ defined above and $\SSS^{M_p, (r_p)}_{\infty}(\RR^d)$ is the $(B)$-space of all $\varphi\in C^{\infty}(\RR^d)$ for which the norm\\
$\sup_{\alpha\in \NN^d}\|e^{N_{r_p}(|\cdot|)}D^{\alpha}\varphi\|_{L^{\infty}(\RR^d)}/ (M_{\alpha}\prod_{j=1}^{|\alpha|}r_j)$ is finite (see \cite{PilipovicK}).

\subsection{Symbol classes and symbolic calculus}\label{sub calculus}
Let $A_p$ and $M_p$ be two weight sequences of positive numbers such that $A_0=A_1=M_0=M_1=1$. We assume that $M_p$ satisfies $(M.1)$, $(M.2)$ and $(M.3)$,  that $A_p$ satisfies $(M.1)$, $(M.2)$, $(M.3)'$, $(M.4)$ and that $A_p\subset M_p$. Without losing generality, we can assume the constants $c_0$ and $H$ appearing in $(M.2)$ are the same for both sequences $M_p$ and $A_p$. Let $\rho_0=\inf\{\rho\in\RR_+|\,A_p\subset M_p^{\rho}\}$; clearly $0<\rho_0\leq 1$. In the sequel $\rho$ is a fixed number satisfying $\rho_0\leq \rho\leq1$, if the infimum is reached, or, otherwise $\rho_0< \rho\leq1$.\\
\indent Let $h,m>0$. As in \cite{BojanP}, we denote by $\Gamma_{A_p,\rho}^{M_p,\infty}(\RR^{2d};h,m)$ the $(B)$-space of all $a\in C^{\infty}(\RR^{2d})$ for which the norm
\beqs
\sup_{\alpha,\beta\in\NN^d}\sup_{(x,\xi)\in\RR^{2d}}\frac{\left|D^{\alpha}_{\xi}D^{\beta}_x
a(x,\xi)\right| \langle
(x,\xi)\rangle^{\rho|\alpha|+\rho|\beta|}e^{-M(m|\xi|)}e^{-M(m|x|)}}
{h^{|\alpha|+|\beta|}A_{\alpha}A_{\beta}}
\eeqs
is finite. As l.c.s., we define (see \cite{BojanP})
$$
\Gamma_{A_p,\rho}^{(M_p),\infty}(\RR^{2d};m)=
\lim_{\substack{\longleftarrow\\h\rightarrow 0}}
\Gamma_{A_p,\rho}^{M_p,\infty}(\RR^{2d};h,m);
\quad
\Gamma_{A_p,\rho}^{(M_p),\infty}(\RR^{2d})=
\lim_{\substack{\longrightarrow\\m\rightarrow\infty}}
\Gamma_{A_p,\rho}^{(M_p),\infty}(\RR^{2d};m);
$$
$$
\Gamma_{A_p,\rho}^{\{M_p\},\infty}(\RR^{2d};h)=
\lim_{\substack{\longleftarrow\\m\rightarrow 0}}
\Gamma_{A_p,\rho}^{M_p,\infty}(\RR^{2d};h,m);
\quad
\Gamma_{A_p,\rho}^{\{M_p\},\infty}(\RR^{2d})=
\lim_{\substack{\longrightarrow\\h\rightarrow\infty}}
\Gamma_{A_p,\rho}^{\{M_p\},\infty}(\RR^{2d};h).
$$
Then,
$\Gamma_{A_p,\rho}^{(M_p),\infty}(\RR^{2d};m)$ and
$\Gamma_{A_p,\rho}^{\{M_p\},\infty}(\RR^{2d};h)$ are $(F)$-spaces.
The spaces $\Gamma_{A_p,\rho}^{*,\infty}(\RR^{2d})$ are barrelled
and bornological.
 For $\tau\in\RR$ and
$a\in\Gamma_{A_p,\rho}^{*,\infty}(\RR^{2d})$, the
$\tau$-quantisation of $a$ is the operator $\Op_{\tau}(a)$, continuous on
 $\SSS^*(\RR^d)$,
given by the  iterated integral:
\beqs
\left(\Op_{\tau}(a)u\right)(x)=\frac{1}{(2\pi)^d}\int_{\RR^d}\int_{\RR^d}e^{i(x-y)\xi}a((1-\tau)x+\tau
y,\xi) u(y)dyd\xi.
\eeqs
Of course, $\Op_{\tau}(a)$ extends to a continuous operator on $\SSS'^*(\RR^d)$ (\cite[Proposition 3.1]{PP1}).\\
\indent Let $t\geq0$. We denote $Q_t=\left\{(x,\xi)\in\RR^{2d}|\,\langle
x\rangle<t, \langle \xi\rangle<t\right\}$ and
$Q_t^c=\RR^{2d}\backslash Q_t$. If $0\leq t\leq 1$, then
$Q_t=\emptyset$ and $Q_t^c=\RR^{2d}$. Let $B\geq 0$ and $h,m>0$.
Denote by $FS_{A_p,\rho}^{M_p,\infty}(\RR^{2d};B,h,m)$ the vector
space of all formal series $\sum_{j=0}^{\infty}a_j(x,\xi)$ such
that $a_j\in  C^{\infty}(\mathrm{int\,}Q^c_{Bm_j})$,
$D^{\alpha}_{\xi} D^{\beta}_x a_j(x,\xi)$ can be extended to a
continuous function on $Q^c_{Bm_j}$ for all $\alpha,\beta\in\NN^d$
and
\beqs
\sup_{j\in\NN}\sup_{\alpha,\beta\in\NN^d}\sup_{(x,\xi)\in
Q_{Bm_j}^c}\frac{\left|D^{\alpha}_{\xi}D^{\beta}_x
a_j(x,\xi)\right| \langle
(x,\xi)\rangle^{\rho|\alpha|+\rho|\beta|+2j\rho}e^{-M(m|\xi|)}e^{-M(m|x|)}}
{h^{|\alpha|+|\beta|+2j}A_{\alpha}A_{\beta}A_jA_j}<\infty.
\eeqs
We use the convention $m_0=0$ and hence, $Q^c_{Bm_0}=\RR^{2d}$. With this norm,
$FS_{A_p,\rho}^{M_p,\infty}\left(\RR^{2d};B,h,m\right)$ becomes a
$(B)$-space. As l.c.s., we define (see \cite{BojanP,PP1}),
$$
FS_{A_p,\rho}^{(M_p),\infty}(\RR^{2d};B)=
\lim_{\substack{\longrightarrow\\m\rightarrow\infty}}
\lim_{\substack{\longleftarrow\\h\rightarrow
0}}
FS_{A_p,\rho}^{M_p,\infty}(\RR^{2d};B,h,m),
$$
$$
FS_{A_p,\rho}^{\{M_p\},\infty}(\RR^{2d};B)=\lim_{\substack{\longrightarrow\\h\rightarrow
\infty}} \lim_{\substack{\longleftarrow\\m\rightarrow 0}}
FS_{A_p,\rho}^{M_p,\infty}(\RR^{2d};B,h,m).
$$
Then, $FS_{A_p,\rho}^{(M_p),\infty}(\RR^{2d};B)$ and $FS_{A_p,\rho}^{\{M_p\},\infty}(\RR^{2d};B)$ are $(LF)$-spaces; both are  barrelled and bornological. The inclusion mapping $\Gamma_{A_p,\rho}^{*,\infty}(\RR^{2d})\rightarrow$ $ FS_{A_p,\rho}^{*,\infty}(\RR^{2d};B)$,
defined as $a\mapsto\sum_{j\in\NN}a_j$, where $a_0=a$ and $a_j=0$, $j\geq 1$, is continuous. We call this inclusion the canonical one. When $B_1\leq B_2$, the mapping $\sum_j a_j\mapsto \sum_j a_{j|_{Q_{B_2m_j}^c}}$, $FS_{A_p,\rho}^{*,\infty}(\RR^{2d};B_1)\rightarrow FS_{A_p,\rho}^{*,\infty}(\RR^{2d};B_2)$, is continuous. We also denote $\ds FS_{A_p,\rho}^{*,\infty}(\RR^{2d})=\lim_{\substack{\longrightarrow \\ B\rightarrow \infty}} FS_{A_p,\rho}^{*,\infty}(\RR^{2d};B)$ where the inductive limit is taken in an algebraic sense; clearly $FS_{A_p,\rho}^{*,\infty}(\RR^{2d})$ is nontrivial.\\
\indent If $\sum_j a_j\in FS_{A_p,\rho}^{*,\infty}(\RR^{2d};B)$
and $n\in\NN$, then $(\sum_j a_j)_n=a_n\in C^{\infty}(Q_{Bm_n}^c)$, while
$(\sum_j a_j)_{<n}=\sum_{j=0}^{n-1}
a_j\in C^{\infty}(Q_{Bm_{n-1}}^c)$;
$\mathbf{1}=\sum_j a_j\in
FS_{A_p,\rho}^{*,\infty}(\RR^{2d};B)$, where $a_0(x,\xi)=1$ and
$a_j(x,\xi)=0$, $\forall j\in\ZZ_+$.

\begin{definition}[{\cite[Definition 3]{BojanP}}]
Two sums $\sum_{j\in\NN}a_j,\,\sum_{j\in\NN}b_j\in
FS_{A_p,\rho}^{*,\infty}(\RR^{2d})$ are said to be equivalent, in
notation $\sum_{j\in\NN}a_j\sim\sum_{j\in\NN}b_j$, if there exist
$m>0$ and $B>0$ (resp. there exist $h>0$ and $B>0$), such that for
every $h>0$ (resp. for every $m>0$),
\beqs
\sup_{n\in\ZZ_+}\sup_{\alpha,\beta}\sup_{(x,\xi)\in
Q_{Bm_n}^c}\frac{\left|D^{\alpha}_{\xi}D^{\beta}_x
\sum_{j<n}\left(a_j(x,\xi)-b_j(x,\xi)\right)\right| \langle
(x,\xi)\rangle^{\rho|\alpha|+\rho|\beta|+2n\rho}}
{h^{|\alpha|+|\beta|+2n}A_{\alpha}A_{\beta}A_nA_ne^{M(m|\xi|)}e^{M(m|x|)}}<\infty.
\eeqs
\end{definition}

In what follows, we will often use the shorthand $w=(x,\xi)\in\RR^{2d}$.\\
\indent Let $f$ be a  positive continuous functions on $\RR^{2d}$ with ultrapolynomial growth of class $*$. Then \cite{PP1}, $U\subseteq FS_{A_p,\rho}^{*,\infty}(\RR^{2d};B')$ is subordinated to $f$ in $FS_{A_p,\rho}^{*,\infty}(\RR^{2d})$, in notation $U\precsim f$, if the following estimate holds: there exists $B\geq B'$ such that for every $h>0$ there
exists $C>0$ (resp. there exist $h,C>0$) such that
\beqs
\sup_{j\in\NN}\sup_{\alpha\in\NN^{2d}}\sup_{w\in
Q_{Bm_j}^c}\frac{\left|D^{\alpha}_w
a_j(w)\right|\langle
w\rangle ^{\rho(|\alpha|+2j)}}{h^{|\alpha|+2j}A_{|\alpha|+2j}f(w)}\leq
C,\,\,\, \mbox{for all}\,\, \ssum a_j\in U.
\eeqs
Given $U\subseteq FS_{A_p,\rho}^{*,\infty}(\RR^{2d};B_1)$ with $U\precsim f$, we say that a bounded set $V$ in $\Gamma_{A_p,\rho}^{(M_p),\infty}(\RR^{2d};m)$ for some $m>0$ (resp. in $\Gamma_{A_p,\rho}^{\{M_p\},\infty}(\RR^{2d};h)$ for some $h>0$) is subordinated to $U$ under $f$, in notations $V\precsim_f U$, if there exists a surjective mapping $\Sigma:U\rightarrow V$ such that the following estimate holds: there exists $B\geq B_1$ such that for every $h>0$ there
exists $C>0$ (resp. there exist $h,C>0$) such that for all $\sum_j
a_j\in U$ and the corresponding $\Sigma(\sum_j a_j)=a\in V$
\beqs
\sup_{n\in\ZZ_+}\sup_{\alpha\in\NN^{2d}}\sup_{w\in
Q_{Bm_n}^c}\frac{\left|D^{\alpha}_w\left(a(w)-
\sum_{j<n}a_j(w)\right)\right|\langle
w\rangle^{\rho(|\alpha|+2n)}}{h^{|\alpha|+2n}A_{|\alpha|+2n}f(w)}\leq
C.
\eeqs
If $V\precsim_f U$ and if
we denote by $\tilde{V}$ the image of $V$ under the canonical
inclusion $\Gamma_{A_p,\rho}^{*,\infty}(\RR^{2d})\rightarrow
FS_{A_p,\rho}^{*,\infty}(\RR^{2d};0)$, $a\mapsto
a+\sum_{j\in\ZZ_+}0$, then $\tilde{V}\precsim f$ in $FS_{A_p,\rho}^{*,\infty}(\RR^{2d};0)$ (see \cite[Section 3.1]{PP1}).
In such a case, we slightly abuse notation and write $V\precsim f$. If $V$ contains only one element $a$ we will often write $a\precsim f$ instead.
The above estimate also implies $\Sigma(\sum_j a_j)\sim\sum_j a_j$. To
see that for a given $U\subseteq
FS_{A_p,\rho}^{*,\infty}(\RR^{2d};B)$ there always exists
$V\precsim_f U$, we can proceed as follows. Let
$\psi\in\DD^{(A_p)}(\RR^{d})$ in the $(M_p)$ case and
$\psi\in\DD^{\{A_p\}}(\RR^{d})$ in the $\{M_p\}$ case
respectively, such that $0\leq \psi\leq 1$, $\psi(\xi)=1$ when
$\langle\xi\rangle\leq 2$ and $\psi(\xi)=0$ when
$\langle\xi\rangle\geq 3$. Set $\chi(x,\xi)=\psi(x)\psi(\xi)$,
$\chi_{n,R}(w)=\chi(w/(Rm_n))$ for $n\in\ZZ_+$ and $R>0$ and put
$\chi_{0,R}(w)=0$. Given $U\subseteq
FS_{A_p,\rho}^{*,\infty}(\RR^{2d};B)$ as above, for $\sum_j a_j\in
U$ denote $R(\sum_j a_j)(w)= \sum_{j=0}^{\infty}
(1-\chi_{j,R}(w))a_j(w)$. If $R> B$, this is a well defined smooth function on $\RR^{2d}$ since the series is locally finite.

\begin{proposition}[{\cite[Proposition 3.3]{PP1}}]\label{subexistest}
Let $U$ and $f$ be as above. There exists $R_0>B$ such that for each $R\geq R_0$ the set $V_R=\{R(\sum_j a_j)|\, \sum_j a_j\in U\}$ is bounded in $\Gamma_{A_p,\rho}^{(M_p),\infty}(\RR^{2d};m)$ for some $m>0$ (resp. in $\Gamma_{A_p,\rho}^{\{M_p\},\infty}(\RR^{2d};h)$ for some $h>0$) and $V_R\precsim_f U$; in this case the surjective mapping $\Sigma$ is $R:U\rightarrow V_R$.
\end{proposition}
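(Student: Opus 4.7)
The plan is to verify well-definedness and smoothness of $R(\ssum a_j)$, and then derive both conclusions from a single Leibniz expansion. For each fixed $w\in\RR^{2d}$, since $m_j\to\infty$ we have $\chi_{j,R}(w)=\chi(w/(Rm_j))=1$ for all $j$ large enough, so $\sum_{j}(1-\chi_{j,R}(w))a_j(w)$ is locally finite. Moreover, for $R>B$ the factor $1-\chi_{j,R}$ vanishes on a neighborhood of $Q_{Bm_j}$, so each term $(1-\chi_{j,R})a_j$ extends smoothly to all of $\RR^{2d}$ by $0$ inside $Q_{Bm_j}$.

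The central estimate is the Leibniz expansion
$$
D^{\alpha}_w\bigl[(1-\chi_{j,R})a_j\bigr]=(1-\chi_{j,R})D^{\alpha}_w a_j-\sum_{0<\gamma\leq\alpha}\binom{\alpha}{\gamma}(D^{\gamma}_w\chi_{j,R})(D^{\alpha-\gamma}_w a_j).
$$
Since $\chi=\psi\otimes\psi\in\DD^{*}(\RR^{2d})$ lives in the class associated with $A_p$, rescaling yields $|D^{\gamma}_w\chi_{j,R}(w)|\leq CL^{|\gamma|}A_{\gamma}/(Rm_j)^{|\gamma|}$; moreover, both $(1-\chi_{j,R})$ and $D^{\gamma}_w\chi_{j,R}$ (for $\gamma\neq 0$) are supported in the region $\{\langle w\rangle\geq\sqrt{3}Rm_j\}$. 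Combined with the hypothesis $U\precsim f$, which gives $|D^{\alpha}_w a_j(w)|\leq C_1 h^{|\alpha|+2j}A_{|\alpha|+2j}f(w)/\langle w\rangle^{\rho(|\alpha|+2j)}$ on $Q^c_{Bm_j}$, this controls every summand uniformly in $\ssum a_j\in U$.

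For boundedness of $V_R$ in $\Gamma_{A_p,\rho}^{*,\infty}$, I would apply $(M.2)$ for $A_p$ to split $A_{|\alpha|+2j}\leq c_0 H^{|\alpha|+2j}A_{\alpha}A_{2j}$, then use $A_p\subset M_p^{\rho}$ together with $(M.2)$ for $M_p$ to get $A_{2j}\leq C_2 L_2^{2j}M_j^{2\rho}$, and finally the log-convexity of $M_p$ (i.e., $m_p$ non-decreasing) to obtain $M_j\leq m_j^{j}$. On the relevant supports, $\langle w\rangle^{-2j\rho}\leq(\sqrt{3}Rm_j)^{-2j\rho}$, so the $j$-dependence collapses into a geometric factor bounded by $(L_3/R^{\rho})^{2j}$. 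Choosing $R_0$ with $L_3/R_0^{\rho}<1$, the series converges absolutely and produces a bound of the required form $|D^{\alpha}_w R(\ssum a_j)(w)|\leq C' h'^{|\alpha|}A_{\alpha}f(w)/\langle w\rangle^{\rho|\alpha|}$.

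For the subordination $V_R\precsim_f U$, decompose
$$
R(\ssum a_j)(w)-\sum_{j<n}a_j(w)=\sum_{j\geq n}(1-\chi_{j,R}(w))a_j(w)-\sum_{j<n}\chi_{j,R}(w)a_j(w).
$$
The tail sum is estimated exactly as in the previous paragraph, but now the support condition $\langle w\rangle\geq\sqrt{3}Rm_j\geq\sqrt{3}Rm_n$ together with the splitting $A_{|\alpha|+2j}\leq c_0 H^{|\alpha|+2j}A_{|\alpha|+2n}A_{2(j-n)}$ (by $(M.2)$) converts the excess powers of $\langle w\rangle$ into the desired $A_{|\alpha|+2n}/\langle w\rangle^{\rho(|\alpha|+2n)}$ factor plus a geometrically summable residue in $j-n$. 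The harder piece is the truncated sum $\sum_{j<n}\chi_{j,R}a_j$: on $\supp\chi_{j,R}$ one has $\langle w\rangle\leq C_3Rm_j$, so the intersection of $\supp\chi_{j,R}$ with $Q^c_{Bm_n}$ is empty unless $Bm_n\leq C_3Rm_j$. This support constraint, together with the Leibniz factors $(Rm_j)^{-|\gamma|}$, repeated applications of $(M.2)$ for both $A_p$ and $M_p$, and the conversion of ratios $m_n/m_j$ into constants depending only on $R/B$, is exactly what brings the $A_{|\alpha|+2j}$ coming from the $a_j$-estimate into the required $A_{|\alpha|+2n}$ form. The main technical obstacle is balancing these competing factors so that the length-$n$ truncated sum yields a bound uniform in $n$; this is secured by first enlarging $R_0$ to force geometric decay in the tail, and then exploiting the surplus in $R$ to absorb the contribution from the finite sum.
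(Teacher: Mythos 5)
Your first three steps (local finiteness and smoothness of $R(\ssum a_j)$, the Leibniz expansion, the rescaled bounds $|D^{\gamma}\chi_{j,R}|\leq CL^{|\gamma|}A_{\gamma}(Rm_j)^{-|\gamma|}$, the localisation of the supports at $\langle w\rangle\geq\sqrt{3}Rm_j$, the splitting by $(M.2)$, and the use of $A_p\subset M_p^{\rho}$ with log-convexity to turn $A_{2j}\langle w\rangle^{-2\rho j}$ into a geometric factor $(L_3/R^{\rho})^{2j}$, absorbed by taking $R_0$ large) are the right mechanism for the boundedness of $V_R$ and for the tail $\sum_{j\geq n}(1-\chi_{j,R})a_j$ in the subordination estimate. (One small omission there: for the Leibniz terms with $\gamma\neq 0$ you also need the upper bound $\langle w\rangle\leq C_3Rm_j$ on $\supp D^{\gamma}\chi_{j,R}$ to convert $(Rm_j)^{-|\gamma|}$ into $C^{|\gamma|}\langle w\rangle^{-\rho|\gamma|}$; the lower bound $\langle w\rangle\geq\sqrt{3}Rm_j$ alone goes the wrong way. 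You state this upper bound only later, and only for the truncated piece.)

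The genuine gap is your treatment of the truncated sum $\sum_{j<n}\chi_{j,R}a_j$. You keep the region $Q^c_{Bm_n}$ with the original $B$ and try to convert $A_{|\alpha|+2j}$ into $A_{|\alpha|+2n}$ by balancing $(M.2)$, the factors $(Rm_j)^{-|\gamma|}$ and the bounded ratio $m_n/m_j\leq C_3R/B$; and your final remark proposes to absorb this piece by "the surplus in $R$", which goes in the wrong direction (enlarging $R$ enlarges $\supp\chi_{j,R}$ and worsens the overlap with $Q^c_{Bm_n}$). The balancing cannot work in general: on $\supp\chi_{j,R}\cap Q^c_{Bm_n}$ one has $\langle w\rangle\asymp m_j\asymp m_n$, so passing from $\langle w\rangle^{-\rho(|\alpha|+2j)}$ to the required $\langle w\rangle^{-\rho(|\alpha|+2n)}$ costs a factor $\approx m_n^{2\rho(n-j)}$, and the only structural gain available is $A_{|\alpha|+2n}/A_{|\alpha|+2j}$. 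Since the hypotheses give only the upper bound $A_p\subset M_p^{\rho}$ (no lower bound of $A_p$ by $M_p^{\rho}$), this ratio need not compensate: e.g.\ $A_p=p!$, $M_p=p!^{4}$, $\rho=1/2$ satisfy all standing assumptions, $m_p\approx p^{4}$, $a_p=p$, so each unit of $n-j$ leaves an uncompensated factor $\sim n^{2}$, while the constraint $Bm_n\leq C_3Rm_j$ permits $n-j$ to grow linearly in $n$; the total deficit is then super-exponential in $n$ and cannot be absorbed by any geometric slack in $h$, $L$ or constants (and a family $U$ whose $a_j$ saturate the $\precsim f$ bounds on these annuli is admissible, so the estimate with the original $B$ genuinely fails). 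The missing idea is to use the freedom built into the definition of $\precsim_f$ ("there exists $B\geq B_1$"): choose the constant there larger than roughly $3R$. Then for $j<n$ one has $\supp\chi_{j,R}\subseteq\{\langle x\rangle\leq 3Rm_j,\ \langle\xi\rangle\leq 3Rm_j\}\subseteq Q_{Bm_n}$, so the whole truncated sum vanishes identically on $Q^c_{Bm_n}$ and no estimate for it is needed; only the tail, which you already handle correctly, remains.
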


If $a\in \Gamma_{A_p,\rho}^{*,\infty}(\RR^{2d})$ satisfies $a\sim 0$, then $\Op_{\tau}(a)\in\mathcal{L}(\SSS'^*(\RR^d),\SSS^*(\RR^d))$ for each $\tau\in\RR$ (see \cite[Theorem 3]{BojanP}). Moreover, we have the following result.

\begin{proposition}[{\cite[Proposition 3.4]{PP1}}]\label{eqsse}
Let $V$ be a bounded subset of $\Gamma_{A_p,\rho}^{(M_p),\infty}(\RR^{2d};\tilde{m})$ for some $\tilde{m}>0$ (resp. of $\Gamma_{A_p,\rho}^{\{M_p\},\infty}(\RR^{2d};\tilde{h})$ for some $\tilde{h}>0$). Assume that there exist $B,m>0$ such that for every $h>0$ there exists $C>0$ (resp. there exist $B,h>0$ such that for every $m>0$ there exists $C>0$) such that
\beqs
\sup_{a\in V}\sup_{N\in\ZZ_+}\sup_{\alpha\in\NN^{2d}}\sup_{w\in Q_{Bm_N}^c}\frac{\left|D^{\alpha}_w a(w)\right|\langle w\rangle^{\rho(|\alpha|+2N)}}{h^{|\alpha|+2N}A_{|\alpha|+2N}e^{M(m|w|)}}\leq C.
\eeqs
Then, for each $\tau\in\RR$, $\{\mathrm{Op}_{\tau}(a)|\, a\in U\}$ is an equicontinuous subset of $\mathcal{L}(\SSS'^*(\RR^d),\SSS^*(\RR^d))$.
\end{proposition}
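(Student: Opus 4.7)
The plan is to realize $\{K_a:a\in V\}$, where $K_a$ is the Schwartz kernel of $\Op_{\tau}(a)$, as a bounded subset of $\SSS^*(\RR^{2d})$. Since $\SSS^*(\RR^d)$ is nuclear (Subsection~\ref{not}), the Schwartz kernel theorem identifies $\mathcal L(\SSS'^*(\RR^d),\SSS^*(\RR^d))$ topologically with $\SSS^*(\RR^{2d})$, so a bounded family of kernels corresponds to an equicontinuous family of operators. The kernel is the oscillatory integral
\[
K_a(x,y)=\frac{1}{(2\pi)^d}\int_{\RR^d}e^{i(x-y)\xi}a(z,\xi)\,d\xi,\qquad z=(1-\tau)x+\tau y,
\]
and the task is to control the relevant $\SSS^*(\RR^{2d})$-seminorms of $D^{\mu}_xD^{\nu}_y K_a$ uniformly in $a\in V$.

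The main technical step is a standard oscillatory-integral estimate. After applying Leibniz to the derivatives, matters reduce to bounding integrals of the form $\int_{\RR^d}e^{i(x-y)\xi}\xi^{\gamma}D^{\beta}_z a(z,\xi)\,d\xi$ for $|\beta|,|\gamma|\le|\mu|+|\nu|$. For a parameter $N\in\ZZ_+$ to be optimized below, I use the identity $\langle x-y\rangle^{-2N}(1-\Delta_\xi)^N e^{i(x-y)\xi}=e^{i(x-y)\xi}$ and integrate by parts in $\xi$, producing a prefactor $\langle x-y\rangle^{-2N}$. The $\xi$-integration is then split at $\langle(z,\xi)\rangle=Bm_N$: on the outer region the hypothesis of the proposition applies and gives, for every term produced by Leibniz, a bound of the form $C h^{2N}A_{|\beta|+2N}\langle(z,\xi)\rangle^{-\rho(|\beta|+2N)+|\gamma|}e^{M(m|z|)+M(m|\xi|)}$; on the inner region $\xi$ is restricted to a ball of radius $\asymp Bm_N$ and the pointwise bound coming from boundedness of $V$ in $\Gamma_{A_p,\rho}^{*,\infty}$, together with $(M.1)$ and $(M.2)$ for $A_p$, yields an estimate of the same structure.

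Optimizing $N$ as a function of $\langle x-y\rangle\langle z\rangle^{\rho}$ through the associated function of $A_p$ collapses the combination $\langle x-y\rangle^{-2N}\langle(z,\xi)\rangle^{-2\rho N}h^{2N}A_{2N}$ into a sub-exponential decay factor; because $A_p\subset M_p^{\rho}$ by the choice of $\rho$, this factor dominates the ambient growth $e^{M(m|z|)}$ and leaves genuine decay in $\langle x-y\rangle$ and $\langle z\rangle$. In the Beurling case the resulting bound has the form $e^{-M(k|x-y|)-M(k|z|)}$ for every $k>0$; in the Roumieu case one invokes the $\SSS^{M_p,(r_p)}_{\infty}$-description of $\SSS^{\{M_p\}}$ and the inequality $N_{r_p}(\rho)\le M(k\rho)$ recorded in Subsection~\ref{not} to obtain a bound with $e^{-N_{r_p}(\,\cdot\,)}$ for a suitable $(r_p)\in\mathfrak R$. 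The elementary estimate $\langle x\rangle+\langle y\rangle\le C(\langle z\rangle+\langle x-y\rangle)$ together with subadditivity of $M$ from $(M.2)$ redistributes this decay into $e^{-M(k'|x|)-M(k'|y|)}$, while the residual $A$-factors absorb into $M_{\mu}M_{\nu}$ via $A_p\subset M_p$ and $(M.2)$.

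The main obstacle is the bookkeeping of the quantifier alternation in the Roumieu case: the hypothesis provides $B,h>0$ uniformly while $m$ is arbitrary, whereas the seminorms on $\SSS^{\{M_p\}}(\RR^{2d})$ are indexed by sequences $(r_p)\in\mathfrak R$. The sequence $(r_p)$ has to be chosen in tandem with the optimization over $N$ so that all constants remain independent of $a\in V$; since the constant $C$ in the hypothesis is already uniform on $V$, the argument then yields the asserted equicontinuity.
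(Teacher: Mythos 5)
Your overall strategy---reduce via nuclearity and the kernel theorem to showing that the kernels $K_a$ form a bounded subset of $\SSS^*(\RR^{2d})$, then estimate the oscillatory integral by integration by parts in $\xi$ and by exploiting the $N$-indexed hypothesis through associated functions and $A_p\subset M_p^{\rho}$---is in outline the same as the proof of the cited result \cite[Proposition 3.4]{PP1} (the present paper gives no proof, it only quotes it). However, your execution has a genuine gap: you tie the number of integrations by parts, the splitting radius $Bm_N$, and the use of the hypothesis to a \emph{single} parameter $N$ chosen as a function of $\langle x-y\rangle\langle z\rangle^{\rho}$, hence independent of $\xi$. With $N$ so fixed, your outer-region integrand is only of size $\langle (z,\xi)\rangle^{-\rho(|\beta|+2N)+|\gamma|}e^{M(m|z|)+M(m|\xi|)}$, and since $e^{M(m|\xi|)}$ grows faster than every polynomial (these symbols are of infinite order, and in the Beurling case $m$ is a fixed positive number, not at your disposal), the $\xi$-integral over $\{\langle(z,\xi)\rangle\geq Bm_N\}$ diverges for every fixed $N$; no subsequent optimisation in $N$ can cure an integral that is infinite for each $N$. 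The same coupling also fails near the diagonal: when $\langle x-y\rangle\lesssim 1$ the prefactor $\langle x-y\rangle^{-2N}$ gives nothing, while the inner-region bound coming from boundedness of $V$ in $\Gamma_{A_p,\rho}^{*,\infty}$ (whose growth parameter is likewise fixed in the Beurling case) is of order $(Bm_N)^{d}e^{M(\tilde m Bm_N)}$ and blows up with $N$, so it is not ``an estimate of the same structure''.

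The missing idea is to decouple the two roles of $N$. First use the hypothesis pointwise in $w=(z,\xi)$: decompose $Q_B^c$ into the regions $Q_{Bm_{N+1}}\setminus Q_{Bm_N}$ and apply the estimate of index $N$ on the $N$-th region (equivalently, take the infimum over all admissible $N$ at each $w$). Via the associated function of $A_p$, the inclusion $A_p\subset M_p^{\rho}$ and iterated use of $(M.2)$, this converts the family of estimates into a single bound of the form $|D^{\alpha}_w a(w)|\leq C h^{|\alpha|}A_{\alpha}\langle w\rangle^{-\rho|\alpha|}e^{M(m|w|)}e^{-2M(k|w|)}$ with a genuinely sub-exponential decay factor ($k$ large for small $h$ in the Beurling case; $k$ fixed but $m$ arbitrarily small in the Roumieu case), which both swallows $e^{M(m|\xi|)}$ and makes the $\xi$-integral converge. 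Then, separately, perform the integration by parts with a number $2N'$ of derivatives optimised only against $\langle x-y\rangle$ (again through the associated function of $A_p$) to produce $e^{-M(k'|x-y|)}$-type decay. With this decoupling the remaining steps of your outline---redistribution of the decay from $(z,x-y)$ to $(x,y)$, absorption of the $A$-factors into $M_{\mu}M_{\nu}$, and the Roumieu bookkeeping via the $(r_p)$-description---do go through, and uniformity in $a\in V$ is preserved because all constants come from the hypothesis and from the boundedness of $V$.
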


We will often call the elements of $\mathcal{L}(\SSS'^*(\RR^d),\SSS^*(\RR^d))$ $*$-regularising operators.\\
\indent The class of $\Psi$DOs with symbols in $\Gamma_{A_p,\rho}^{*,\infty}(\RR^{2d})$ is closed modulo $*$-regularising operators with respect to composition, adjoints and change of quantisation, cf. \cite{BojanP}.

\subsection{Weyl quantisation. The sharp product and ring
structure of $FS_{A_p,\rho}^{*,\infty}(\RR^{2d};B)$} \label{sub Weyl quantasation}

In the sequel, we will be particularly interested in the Weyl quantisation, i.e. the quantisation obtained for $\tau=1/2$. As standard, we use $a^w$ as shorthand for $\Op_{1/2}(a)$. We recall few necessary results from \cite{PP1}.\\
\indent Let $\sum_j a_j,\sum_j b_j\in
FS_{A_p,\rho}^{*,\infty}(\RR^{2d};B)$. We define their sharp
product $\sum_j a_j \# \sum_j b_j$, via the formal series $\sum_j
c_j=\sum_j a_j \# \sum_j b_j$, where
\beqs
c_j(x,\xi)=\sum_{s+k+l=j}\sum_{|\alpha+\beta|=l}\frac{(-1)^{|\beta|}} {\alpha!\beta!2^l}\partial^{\alpha}_{\xi}D^{\beta}_x
a_s(x,\xi)\partial^{\beta}_{\xi} D^{\alpha}_x b_k(x,\xi),\,\,
(x,\xi)\in Q^c_{Bm_j}.
\eeqs
It is easy to verify that $\sum_j c_j$
is a well defined element of $FS_{A_p,\rho}^{*,\infty}(\RR^{2d};B)$.
If $a\in \Gamma_{A_p,\rho}^{*,\infty}(\RR^{2d})$, then $a\#\sum_j
b_j$ will denote the $\#$ product of the image of $a$ under the
canonical inclusion
$\Gamma_{A_p,\rho}^{*,\infty}(\RR^{2d})\rightarrow
FS_{A_p,\rho}^{*,\infty}(\RR^{2d};B)$ and $\sum_j b_j$. The same convention applies if $b\in \Gamma_{A_p,\rho}^{*,\infty}(\RR^{2d})$ or if both
$a,b\in \Gamma_{A_p,\rho}^{*,\infty}(\RR^{2d})$.

\begin{remark}\label{real-valuedsym}
If $\sum_j a_j,\sum_j b_j\in FS_{A_p,\rho}^{*,\infty}(\RR^{2d};B)$ and $\sum_j c_j=\sum_j a_j\#\sum_jb_j$, then $\sum_j\overline{c_j}=\sum_j\overline{b_j}\#\sum_j\overline{a_j}$. In particular, if $a_j$ and $b_j$ are real-valued for all $j\in\NN$ and $\sum_ja_j\#\sum_jb_j=\sum_jb_j\#\sum_ja_j$, then $c_j$ are real-valued for all $j\in\NN$.
\end{remark}

As one might expect, the $\#$-product corresponds to the composition of two Weyl quantisations: for $a,b\in \Gamma_{A_p,\rho}^{*,\infty}(\RR^{2d})$, $a^wb^w-c^w$ is $*$-regularising where $c\in \Gamma_{A_p,\rho}^{*,\infty}(\RR^{2d})$ has asymptotic expansion $a\# b$; i.e. $c\sim a\# b$. We have the following more precise result.

\begin{theorem}[{\cite[Theorem 4.2]{PP1}, \cite[Corollary 4.3]{PP1}}]\label{weylq}
Let $U_1,U_2\subseteq FS_{A_p,\rho}^{*,\infty}(\RR^{2d};B)$ be
such that $U_1\precsim f_1$ and $U_2\precsim f_2$ in
$FS_{A_p,\rho}^{*,\infty}(\RR^{2d};B)$ for some continuous
positive functions $f_1$ and $f_2$ with ultrapolynomial growth of
class $*$. Then:
\begin{itemize}
\item[$(i)$] $U_1\#U_2\precsim f_1f_2$ in $FS_{A_p,\rho}^{*,\infty}(\RR^{2d};B)$.
\item[$(ii)$] For $\sum_j a_j\in U_1$ and $\sum_j b_j\in U_2$ denote $\sum_j c_{j,a,b}=\sum_j a_j\#\sum_j b_j\in U_1\# U_2$. Then, there exists $R>0$, which can be chosen arbitrarily large, such that
\beqs
\left\{a^wb^w-c^w\big|\, a=R(\ssum a_j),\, b=R(\ssum b_j),\, c=R(\ssum c_{j,a,b})\right\}
\eeqs
is an equicontinuous subset of $\mathcal{L}(\SSS'^*(\RR^d),\SSS^*(\RR^d))$ and
    \beq\label{krh1791}
    \left\{R(\ssum a_j\# \ssum b_j)\big|\, \ssum a_j\in U_1,\, \ssum b_j\in U_2\right\}\precsim_{f_1f_2}U_1\#U_2.
    \eeq
\end{itemize}
\end{theorem}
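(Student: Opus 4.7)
The proof splits into a formal-series estimate (i) and its operator-level consequence (ii).

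For (i), fix $\gamma\in\NN^{2d}$, $j\in\NN$, $w\in Q^c_{Bm_j}$. Since $p\mapsto m_p$ is non-decreasing, $Q^c_{Bm_j}\subseteq Q^c_{Bm_s}\cap Q^c_{Bm_k}$ for every $s,k\leq j$, so (after enlarging $B$ if necessary) the hypotheses $U_1\precsim f_1$ and $U_2\precsim f_2$ apply to $a_s$ and $b_k$ at $w$. Expanding $D^\gamma_w c_j$ by Leibniz from the definition of the sharp product gives a finite sum indexed by $(s,k,l,\alpha,\beta,\gamma_1,\gamma_2)$ with $s+k+l=j$, $|\alpha+\beta|=l$, $\gamma_1+\gamma_2=\gamma$. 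The subordination bounds supply
\[
\bigl|D^{\gamma_1}\partial_\xi^\alpha D_x^\beta a_s(w)\bigr| \cdot \bigl|D^{\gamma_2}\partial_\xi^\beta D_x^\alpha b_k(w)\bigr| \leq C^2 (h')^{|\gamma|+2j}\, A_{|\gamma_1|+l+2s}A_{|\gamma_2|+l+2k}\, f_1(w)f_2(w)\, \langle w\rangle^{-\rho(|\gamma|+2j)},
\]
where the crucial exponent identity $(|\gamma_1|+l+2s)+(|\gamma_2|+l+2k)=|\gamma|+2j$ produces the correct $\langle w\rangle$-power and $h'$-power. A short Komatsu-type cascade consolidates the $A_p$-factors: $(M.2)$ splits off the $l$-derivatives, yielding $A_{|\gamma_1|+2s}A_{|\gamma_2|+2k}A_l^2$ times $c_0^2 H^{|\gamma|+2j}$; then $(M.1)$ gives both $A_{|\gamma_1|+2s}A_{|\gamma_2|+2k}\leq A_{|\gamma|+2(j-l)}$ and $A_l^2\leq A_{2l}$; and a final $(M.1)$ telescopes $A_{|\gamma|+2(j-l)}A_{2l}\leq A_{|\gamma|+2j}$. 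The multinomial identities $\sum_{|\alpha+\beta|=l}(\alpha!\beta!)^{-1}=(2d)^l/l!$ and $\sum_{\gamma_1+\gamma_2=\gamma}\binom{\gamma}{\gamma_1}=2^{|\gamma|}$, together with $|\{(s,k):s+k=j-l\}|=j-l+1$, reduce the outer sum to $2^{|\gamma|}\sum_{l\leq j}(j-l+1)d^l/l!\leq 2^{|\gamma|+j}e^d$, which is absorbed into a single geometric factor $(2Hh')^{|\gamma|+2j}$. Choosing $h'$ small (Beurling) or noting that the existing $h'$ is fixed (Roumieu) gives exactly the inequality defining $U_1\#U_2\precsim f_1f_2$.

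For (ii), I apply Proposition \ref{subexistest} simultaneously to $U_1$, $U_2$, and $U_1\#U_2$ with a single threshold $R$. Since the threshold $R_0$ in that proposition depends only on the subordinating weight and on $B$, a common $R$ (arbitrarily large) works for all three families; the subordination \eqref{krh1791} is then a direct corollary of Proposition \ref{subexistest} combined with part (i). To establish the uniform regularising property of $a^w b^w-c^w$, I would write $a^w b^w$ via its oscillatory integral and Taylor-expand the $a\#b$-integrand in the off-diagonal variables; the $l$-th Taylor coefficient is precisely the $l$-th term of the sharp product. Truncating at an arbitrary order $N$, integrating by parts in the remainder, and re-applying the estimate from (i) to the truncation error, one shows that the symbol of $a^w b^w-c^w$ satisfies the hypotheses of Proposition \ref{eqsse} uniformly in $(\sum_j a_j,\sum_j b_j)\in U_1\times U_2$; equicontinuity in $\mathcal{L}(\SSS'^*(\RR^d),\SSS^*(\RR^d))$ then follows from that proposition.

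The principal difficulty lies not in the algebra of (i) --- which, once the exponent identity and the $(M.2)\!\to\!(M.1)\!\to\!(M.1)$ cascade are identified, is almost automatic --- but in the uniformity required in (ii): ensuring that one $R$ serves the three families, that the constants arising from the truncated oscillatory-integral remainder depend only on $f_1,f_2,B$ and the symbol-class data (not on the individual symbols), and that the Beurling and Roumieu quantifier orders on $h$ and $m$ are consistently respected when converting symbol subordination into operator equicontinuity via Proposition \ref{eqsse}. The surjective map $\Sigma$ of the definition of $\precsim_{f_1f_2}$ must be taken to be $R(\cdot)$ itself, which is compatible with the $R$-truncation provided $R$ is chosen large enough to exceed every relevant threshold simultaneously.
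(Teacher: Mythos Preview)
The paper does not prove this theorem: it is quoted verbatim from \cite[Theorem 4.2, Corollary 4.3]{PP1}, with no argument given here. So there is no ``paper's own proof'' to compare your sketch against, and the relevant question is whether your outline stands on its own.

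Your treatment of (i) is essentially complete and correct. The exponent identity $(|\gamma_1|+l+2s)+(|\gamma_2|+l+2k)=|\gamma|+2j$ is the heart of the matter, and your $(M.2)\to(M.1)\to(M.1)$ cascade for the $A_p$-factors is right: log-convexity of $A_p$ with $A_0=1$ gives $A_pA_q\leq A_{p+q}$, which is exactly what you use. The combinatorial bookkeeping is accurate. One small point: when you say ``after enlarging $B$ if necessary'', make sure you track that the \emph{conclusion} $U_1\#U_2\precsim f_1f_2$ is allowed to hold for a possibly larger $B$ than the one for which $\sum_j c_j\in FS^{*,\infty}_{A_p,\rho}(\RR^{2d};B)$; the definition of $\precsim$ permits this.

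Your treatment of (ii) is an outline rather than a proof, and you yourself flag this. The subordination \eqref{krh1791} does follow from Proposition~\ref{subexistest} applied to $U_1\#U_2$ once (i) is in hand. The genuine work is the equicontinuity of $\{a^wb^w-c^w\}$: writing $a^wb^w$ as an oscillatory integral, Taylor-expanding, and bounding the remainder uniformly in $U_1\times U_2$ is precisely the content of \cite[Theorem 4.2]{PP1}, and it is not short. Your invocation of Proposition~\ref{eqsse} at the end is correct in spirit, but getting the remainder symbol into the form that proposition requires --- with the right $Q^c_{Bm_N}$-localisation and the right $h,m$ quantifier order in each case --- is where all the analysis lives. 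As a roadmap your paragraph is accurate; as a proof it defers the hard step to the cited reference, which is exactly what the paper itself does.
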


\begin{remark}
Theorem \ref{weylq} $(ii)$ is applicable when $U_1$ and $U_2$ are bounded subsets of $\Gamma_{A_p,\rho}^{(M_p),\infty}(\RR^{2d};m)$ for some $m>0$ (resp. of $\Gamma_{A_p,\rho}^{\{M_p\},\infty}(\RR^{2d};h)$ for some $h>0$). In this case, the theorem reads: there exists $R>0$, which can be chosen arbitrary large, such that $\{a^wb^w-\Op_{1/2}(R(a\# b))|\, a\in U_1,\, b\in U_2\}$ is an equicontinuous $*$-regularising set and $\{R(a\# b)|\, a\in U_1,\, b\in U_2\}$ is bounded in $\Gamma_{A_p,\rho}^{(M_p),\infty}(\RR^{2d};m)$ for some $m>0$ (resp. of $\Gamma_{A_p,\rho}^{\{M_p\},\infty}(\RR^{2d};h)$ for some $h>0$).
\end{remark}

\begin{proposition}[{\cite[Proposition 4.5]{PP1}}]\label{hyporingg}
For each $B\geq 0$, $FS_{A_p,\rho}^{*,\infty}(\RR^{2d};B)$ is a
ring with the pointwise addition and multiplication given by $\#$.
Moreover, the multiplication
$\#:FS_{A_p,\rho}^{*,\infty}(\RR^{2d};B)\times
FS_{A_p,\rho}^{*,\infty}(\RR^{2d};B)\rightarrow
FS_{A_p,\rho}^{*,\infty}(\RR^{2d};B)$ is hypocontinuous.
\end{proposition}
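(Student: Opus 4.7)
The plan is to dispatch the ring axioms by direct computation and then derive hypocontinuity from Theorem \ref{weylq}$(i)$. Distributivity of $\#$ over pointwise addition is immediate from the defining formula for $c_j$, which depends bilinearly on the pairs $(a_s)_s$ and $(b_k)_k$. The element $\mathbf{1}$ clearly belongs to $FS_{A_p,\rho}^{*,\infty}(\RR^{2d};B)$ (its defining seminorms are trivially bounded). Computing $\mathbf{1}\#\ssum b_k$ forces $s=0$ in the inner sum; since $\partial^\alpha_\xi D^\beta_x(1)=0$ for $(\alpha,\beta)\neq(0,0)$, the sum collapses to $\alpha=\beta=0$, $l=0$, giving $c_j=b_j$. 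The symmetric computation yields $\ssum b_k\#\mathbf{1}=\ssum b_k$.

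The main algebraic point is associativity. Expanding $(\ssum a_j\#\ssum b_j)\#\ssum c_j$ by applying the definition of $\#$ twice and then redistributing the outer derivatives $\partial^\alpha_\xi D^\beta_x$ across the inner product $(\partial^{\alpha'}_\xi D^{\beta'}_x a_{s_1})(\partial^{\beta'}_\xi D^{\alpha'}_x b_{s_2})$ via the Leibniz rule, the $n$-th coefficient reindexes as a triple sum over tuples $(s_1,s_2,s_3;\alpha_{12},\beta_{12},\alpha_{13},\beta_{13},\alpha_{23},\beta_{23})$ subject to $s_1+s_2+s_3+\sum_{i<j}(|\alpha_{ij}|+|\beta_{ij}|)=n$, with Moyal-type signs and factorial weights. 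The same manipulation applied to $\ssum a_j\#(\ssum b_j\#\ssum c_j)$ produces the identical expression, which is the formal associativity of the Moyal star product.

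For hypocontinuity, since $FS_{A_p,\rho}^{*,\infty}(\RR^{2d};B)$ is an $(LF)$-space, hence barrelled and bornological, it suffices to verify separate continuity together with boundedness of the image of products of bounded sets. Let $U_1,U_2\subseteq FS_{A_p,\rho}^{*,\infty}(\RR^{2d};B)$ be bounded; each lies bounded in some Banach step $FS_{A_p,\rho}^{M_p,\infty}(\RR^{2d};B,h_i,m_i)$. Unpacking the defining seminorm and using $(M.2)$ to absorb the factor $A_{|\alpha'|+|\beta'|+2j}$ into bounds involving $A_{\alpha'}A_{\beta'}A_jA_j$ produces estimates $U_i\precsim f_i$ with $f_i$ continuous positive functions of ultrapolynomial growth of class $*$ (of the form $C_ie^{M(m_i'|w|)}$). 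Theorem \ref{weylq}$(i)$ then gives $U_1\#U_2\precsim f_1f_2$, so $U_1\#U_2$ is bounded in $FS_{A_p,\rho}^{*,\infty}(\RR^{2d};B)$. Applied with $U_1=\{a\}$ fixed, this shows that the linear map $b\mapsto a\#b$ sends bounded sets to bounded sets; bornologicity upgrades this to continuity, and symmetrically in the other variable. Separate continuity on barrelled spaces yields hypocontinuity.

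The main obstacle is the associativity step. A more elegant route via operator composition — using $a^w\circ(b^w\circ c^w)=(a^w\circ b^w)\circ c^w$ and Theorem \ref{weylq}$(ii)$ — only secures associativity modulo the equivalence relation $\sim$, which is insufficient for a literal ring structure on $FS_{A_p,\rho}^{*,\infty}(\RR^{2d};B)$ itself. The combinatorial reindexing of the quadruple sum into a symmetric triple sum is therefore unavoidable, and the careful tracking of the alternating signs, factorials, and Leibniz binomials forms the technical core of the proof.
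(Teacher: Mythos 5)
The paper itself contains no proof of this proposition; it is quoted from \cite[Proposition 4.5]{PP1}, so your argument can only be measured against the route that reference (and the machinery recalled in Subsection \ref{sub calculus}) suggests, namely a direct estimate of the defining seminorms of the coefficients $c_j$ plus the formal Moyal-type computation for the algebraic identities. Your handling of the unit and distributivity is correct, and the reduction of hypocontinuity to separate continuity (barrelledness for equicontinuity on bounded sets, bornologicity to turn boundedness of images of bounded sets into continuity) is a legitimate abstract scheme. Your remark that associativity cannot be obtained from $a^wb^wc^w$, since that only gives equality modulo $\sim$, is also well taken; but the combinatorial reindexing you describe, which you yourself identify as the technical core, is only asserted, and the closure statement — that $\ssum a_j\#\ssum b_j$ has finite $FS$-seminorms at all — is never checked directly.

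The concrete gap is the step where you deduce boundedness in $FS_{A_p,\rho}^{*,\infty}(\RR^{2d};B)$ from Theorem \ref{weylq}$(i)$. The conclusion $U_1\# U_2\precsim f_1f_2$ only guarantees the estimates on $Q^c_{B''m_j}$ for some $B''\geq B$: the definition of $\precsim$ explicitly allows enlarging the constant $B$. Membership and boundedness in the fixed space $FS_{A_p,\rho}^{*,\infty}(\RR^{2d};B)$, however, require the suprema in the defining seminorms to be taken over $Q^c_{Bm_j}$, and the uncovered regions $Q^c_{Bm_j}\cap Q_{B''m_j}$ grow with $j$ (since $m_j\to\infty$), so no abstract argument closes this discrepancy. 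The honest route — and presumably that of \cite{PP1} — is to estimate $D^{\alpha}_{\xi}D^{\beta}_x c_j$ on $Q^c_{Bm_j}$ directly from the defining formula for $\#$, using the Leibniz rule, conditions $(M.1)$ and $(M.2)$ for $A_p$, and a counting identity of the type (\ref{sst57}); this single computation simultaneously gives closure, shows that $\#$ maps products of bounded sets to bounded sets of the same fixed-$B$ space (which then feeds your barrelled/bornological argument, or indeed yields the required equicontinuity estimates outright), and involves essentially the same bookkeeping you would need anyway to carry out the associativity reindexing. With that estimate supplied and the Moyal reindexing actually performed, your plan goes through.
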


The multiplicative unity of the ring $FS_{A_p,\rho}^{*,\infty}(\RR^{2d};B)$ is $\mathbf{1}=1+0+0+\ldots$.

\begin{remark}\label{ktrsln159}
For $k\in\ZZ_+$ and $\sum_j a_j\in FS_{A_p,\rho}^{*,\infty}(\RR^{2d};B)$, we will use $(\sum_j a_j)^{\# k}$ as a shorthand for $\ds\underbrace{\ssum a_j\#\ldots\# \ssum a_j}_{k}$. Additionally, $(\sum_j a_j)^{\# 0}$ will just mean $\mathbf{1}$.
\end{remark}

\subsection{Realisations on $L^2(\RR^d)$. Hypoelliptic operators of infinite order} \label{sub realisations}

We start by recalling the notion of hypoellipticity (\cite[Definition 1.1]{CPP}). A symbol $a\in\Gamma^{*,\infty}_{A_p,\rho}(\RR^{2d})$ is hypoelliptic if
\begin{itemize}
\item[$i$)] there exists $B>0$ such that there are $c,m>0$ (resp. for every $m>0$ there is $c>0$) such that
\beq\label{dd1}
|a(x,\xi)|\geq c e^{-M(m|x|)-M(m|\xi|)},\quad
(x,\xi)\in Q^c_B,
\eeq
\item[$ii$)] there exists $B>0$ such that for every $h>0$ there is $C>0$ (resp. there are $h,C>0$) such that
\beq\label{dd2}
\left|D^{\alpha}_{\xi}D^{\beta}_x
a(x,\xi)\right|\leq
C\frac{h^{|\alpha|+|\beta|}|a(x,\xi)|A_{\alpha}A_{\beta}} {\langle(x,\xi)\rangle^{\rho(|\alpha|+|\beta|)}},\,\,
\alpha,\beta\in\NN^d,\, (x,\xi)\in Q^c_B.
\eeq
\end{itemize}

We can explicitly write the asymptotic expansion of a parametrix of a hypoelliptic Weyl quantisation.

\begin{proposition}[{\cite[Proposition 5.2]{PP1}}]\label{parametweyl}
Let $a\in\Gamma^{*,\infty}_{A_p,\rho}(\RR^{2d})$ be hypoelliptic. Define $q_0(w)=a(w)^{-1}$ on $Q^c_B$ and inductively, for $j\in\ZZ_+$,
\beqs
q_j(x,\xi)=-q_0(x,\xi)\sum_{s=1}^j\sum_{|\alpha+\beta|=s} \frac{(-1)^{|\beta|}}{\alpha!\beta!2^s}\partial^{\alpha}_{\xi} D^{\beta}_x q_{j-s}(x,\xi) \partial^{\beta}_{\xi} D^{\alpha}_x a(x,\xi),\,\, (x,\xi)\in Q^c_B.
\eeqs
Then, for every $h>0$ there exists $C>0$ (resp. there exist $h,C>0$) such that
\beq\label{estofpara}
\left|D^{\alpha}_w q_j(w)\right|\leq C\frac{h^{|\alpha|+2j}A_{|\alpha|+2j}}{|a(w)|\langle w\rangle^{\rho(|\alpha|+2j)}},\,\, w\in Q^c_B,\,\alpha\in\NN^{2d},\, j\in\NN.
\eeq
If $B\leq 1$, then $(\sum_j q_j)\# a= \mathbf{1}$ in $FS_{A_p,\rho}^{*,\infty}(\RR^{2d};0)$. If $B>1$, one can extend $q_0$ to an element of $\Gamma^{*,\infty}_{A_p,\rho}(\RR^{2d})$ by modifying it on $Q_{B'}\backslash Q_B$, for $B'>B$. In this case $\sum_j q_j\in FS_{A_p,\rho}^{*,\infty}(\RR^{2d};B')$, $((\sum_j q_j)\# a)_k=0$ on $Q^c_{B'}$, $\forall k\in\ZZ_+$, and $((\sum_j q_j)\#a)_0-1=q_0a-1$ belongs to $\DD^{(A_p)}(\RR^{2d})$ (resp. $\DD^{\{A_p\}}(\RR^{2d})$).\\
\indent In particular, for $q\sim\sum_j q_j$ there exists $*$-regularising operator $T$ such that $q^wa^w=\mathrm{Id}+T$.
\end{proposition}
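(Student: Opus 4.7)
The heart of the argument is the bound \eqref{estofpara}, which I would establish by induction on $j$. The base case $j=0$ concerns $q_0=1/a$ on $Q^c_B$: an auxiliary induction on $|\alpha|$, starting from the identity $a\cdot q_0=1$ and the Leibniz rule, expresses $D^\alpha_w q_0$ as a linear combination of $q_0$ times products of derivatives of $a$ and lower-order derivatives of $q_0$. Substituting the hypoellipticity estimate \eqref{dd2} and consolidating the $A_p$-factors via $(M.2)$ yields $|D^\alpha_w q_0|\leq C h^{|\alpha|}A_{|\alpha|}|a(w)|^{-1}\langle w\rangle^{-\rho|\alpha|}$. For the inductive step in $j$ I would differentiate the recursive definition of $q_j$ by Leibniz; each resulting term is a product of a derivative of $q_0$ (controlled by the $j=0$ bound), a derivative of $q_{j-s}$ with $s\geq 1$ (controlled by the inductive hypothesis), and a derivative of $a$ (controlled by \eqref{dd2}). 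The $\langle w\rangle^{-\rho}$-powers add to $\rho(|\alpha|+2j)$ automatically, and $(M.2)$ together with $(M.4)$ allow one to collapse the various $A_p$-factors into a single $A_{|\alpha|+2j}$; summing over the decompositions absorbs a combinatorial factor into an enlargement of $h$. This bookkeeping is the main technical obstacle and closely parallels the finite-order Shubin calculus adapted to the ultradifferentiable setting.

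The identity $(\sum_j q_j)\#a=\mathbf{1}$ when $B\leq 1$ is then purely algebraic: from the sharp product formula the zeroth term is $q_0 a=1$, while for $j\geq 1$ a reindexing $s\mapsto j-s$ shows that the definition of $q_j$ is engineered so that the contribution $q_j a$ cancels the remaining sum, giving $c_j=0$. When $B>1$, I would pick a smooth cutoff supported in $Q_{B'}$ and equal to $1$ on $Q_B$, and use it to extend $q_0$ from $Q^c_B$ to an element of $\Gamma^{*,\infty}_{A_p,\rho}(\RR^{2d})$, modifying it only on $Q_{B'}\setminus Q_B$. The same recursion then defines $q_j$ on $Q^c_{B'}$ for $j\geq 1$, and the algebraic cancellation is unaffected on $Q^c_{B'}$, so $((\sum_j q_j)\#a)_k$ vanishes there for $k\geq 1$, while $((\sum_j q_j)\#a)_0-1=q_0 a-1$ is supported on $Q_{B'}\setminus Q_B$ and hence belongs to $\DD^{(A_p)}(\RR^{2d})$, respectively $\DD^{\{A_p\}}(\RR^{2d})$.

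For the final assertion, observe that $f=1/|a|$ has ultrapolynomial growth of class $*$ by hypoellipticity condition $i)$, and \eqref{estofpara} is precisely the statement $\{\sum_j q_j\}\precsim f$ in $FS^{*,\infty}_{A_p,\rho}(\RR^{2d};B')$. Proposition \ref{subexistest} then produces $q=R(\sum_j q_j)\in \Gamma^{*,\infty}_{A_p,\rho}(\RR^{2d})$ with $q\sim\sum_j q_j$. Theorem \ref{weylq}$(ii)$, applied to $U_1=\{q\}$ and $U_2=\{a\}$, yields a symbol $c$ with $c\sim q\#a\sim\bigl(\sum_j q_j\bigr)\#a\sim\mathbf{1}$ and such that $q^w a^w-c^w$ is $*$-regularising. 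Since $c-1\sim 0$, \cite[Theorem 3]{BojanP} ensures that $(c-1)^w=c^w-\mathrm{Id}$ is itself $*$-regularising, and we conclude $q^w a^w=\mathrm{Id}+T$ with $T\in\mathcal{L}(\SSS'^*(\RR^d),\SSS^*(\RR^d))$.
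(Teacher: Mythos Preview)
The paper does not actually prove this proposition: it is quoted verbatim from \cite[Proposition 5.2]{PP1} as part of the preliminary material, so there is no ``paper's own proof'' to compare against directly. That said, your outline is correct and is precisely the route the authors follow in the closely related vector-valued analogue, Lemma \ref{parametrixvvalue}, whose proof in this paper sketches the same two-layer induction (on $|\alpha|$ for $j=0$ via differentiation of $q_0a=1$, then on $j$ via the recursion) and defers the bookkeeping to \cite[Lemmas 3.3 and 3.4]{CPP}. Your handling of the algebraic identity, the cutoff extension for $B>1$, and the final passage from $q\sim\sum_j q_j$ to $q^wa^w=\mathrm{Id}+T$ via Proposition \ref{subexistest} and Theorem \ref{weylq} are all standard and in line with how the paper uses these results elsewhere.
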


Since hypoelliptic operators have parametrices, they are globally $\SSS^*$-regular, i.e. globally ultra-regular of class $*$, meaning if $a^wf\in \SSS^*(\RR^d)$ then $f\in\SSS^*(\RR^d)$.\footnote{When $a$ is hypoelliptic, then $\Op_{\tau}(a)$ is $\SSS^*$-regular for any $\tau\in\RR$ since we can always change the quantisation modulo $*$-regularising operator, see \cite[Proposition 3.5]{PP1}.}

\begin{remark}\label{kts951307}
A similar construction yields $\tilde{q}\in\Gamma^{*,\infty}_{A_p,\rho}(\RR^{2d})$ such that $a^w\tilde{q}^w=\mathrm{Id}+\tilde{T}$ with $\tilde{T}\in\mathcal{L}(\SSS'^*(\RR^d),\SSS^*(\RR^d))$ (see \cite[Subsection 6.2.1]{PP1} for more details). Knowing this, it is easy to prove that we can use the left parametrix $q^w$ as a right one as well, i.e. there exists $T_1,T_2\in\mathcal{L}(\SSS'^*(\RR^d),\SSS^*(\RR^d))$ such that $q^wa^w=\mathrm{Id}+T_1$ and $a^wq^w=\mathrm{Id}+T_2$.
\end{remark}

\begin{remark}\label{ktv957939}
For hypoelliptic $a\in\Gamma^{*,\infty}_{A_p,\rho}(\RR^{2d})$, we can construct a parametrix $q$ out of $\sum_j q_j\in FS_{A_p,\rho}^{*,\infty}(\RR^{2d};B')$ in a specific way. Namely (see \cite[Remark 8.7]{PPV-BMS} for the details), there exists $R>0$ and a $*$-regularising operator $T$ such that $q^wa^w=\mathrm{Id}+T$, where $q=R(\sum_j q_j)\in \Gamma^{*,\infty}_{A_p,\rho}(\RR^{2d})$ satisfies the following conditions: there exist $B''\geq B'$ and $c'',C''>0$ such that
\beq\label{ktr991509}
c''/|a(w)|\leq |q(w)|\leq C''/|a(w)|,\,\, \forall w\in Q^c_{B''}.
\eeq
Moreover, for every $h>0$ there exists $C>0$ (resp. there exist $h,C>0$) such that
\beq\label{ktl997133}
\left|D^{\alpha}_w q(w)\right|\leq Ch^{|\alpha|}A_{\alpha}|a(w)|^{-1}\langle w\rangle^{-\rho|\alpha|},\,\, w\in Q^c_{B''},\,\alpha\in\NN^{2d}.
\eeq
Thus, $q$ is hypoelliptic. If we additionally assume that $|a(w)|\rightarrow \infty$ as $|w|\rightarrow\infty$, then it follows that $q^w$ is compact operator on $L^2(\RR^d)$ (see \cite[Remark 8.7]{PPV-BMS}).
\end{remark}

\begin{remark}\label{kth995559}
Let $b\in\Gamma^{*,\infty}_{A_p,\rho}(\RR^{2d})$ be hypoelliptic and $\tau\in\RR$. Applying \cite[Proposition 3.5]{PP1}, Remark \ref{kts951307} and Remark \ref{ktv957939} one can find a hypoelliptic $\tilde{q}\in\Gamma^{*,\infty}_{A_p,\rho}(\RR^{2d})$ such that $c'_1/|b(w)|\leq |\tilde{q}(w)|\leq c'_2/|b(w)|$, $\forall w\in Q^c_{B'_1}$, for some $c'_1,c'_2,B'_1>0$, and $\Op_{\tau}(\tilde{q})\Op_{\tau}(b)-\mathrm{Id}$ and $\Op_{\tau}(b)\Op_{\tau}(\tilde{q})-\mathrm{Id}$ are $*$-regularising. This immediately yields that if $\tilde{q}_1\in\Gamma^{*,\infty}_{A_p,\rho}(\RR^{2d})$ is any other left $\tau$-parametrix of $b$, i.e. $\Op_{\tau}(\tilde{q}_1)\Op_{\tau}(b)-\mathrm{Id}\in \mathcal{L}(\SSS'^*(\RR^d),\SSS^*(\RR^d))$, then $\Op_{\tau}(\tilde{q}_1)-\Op_{\tau}(\tilde{q})$ is $*$-regularising, which, in turn, yields that we can use $\Op_{\tau}(\tilde{q}_1)$ as a right parametrix as well.
\end{remark}

Let $a\in\Gamma^{*,\infty}_{A_p,\rho}(\RR^{2d})$ and $A$
be the corresponding unbounded operator on $L^2(\RR^d)$ with domain $\SSS^*(\RR^d)$
defined as $A \varphi=a^w\varphi$, $\varphi\in\SSS^*(\RR^d)$. Considering
$a^w$ as a mapping on  $\SSS'^*(\RR^d)$, its restriction to the subspace $\{g\in L^2(\RR^d)|\, a^wg\in
L^2(\RR^d)\}$ defines a closed extension of $A$ which is called
the maximal realisation of $A$. As standard, we denote by $\overline{A}$ the
closure of $A$, also called the minimal realisation of $A$. When $a$ is hypoelliptic, the minimal and maximal realisations coincide and they are given by the restriction of $a^w$ on the domain of $\overline{A}$; if additionally $a$ is real-valued, then $\overline{A}$ is a self-adjoint operator on $L^2(\RR^d)$ (see \cite[Proposition 4.4]{PPV-BMS}).

\subsection{Spectrum and the asymptotics of the eigenvalue counting function for infinite order $\Psi$DOs}\label{spec1}
We start by pointing out that the spectrum of the closure of a hypoelliptic operator with real-valued symbol whose absolute value tends to infinity solely consists of a sequence of unbounded eigenvalues, as stated in the ensuing proposition.
\begin{proposition}[{\cite[Proposition 4.6]{PPV-BMS}}]\label{discretness_of_spe}
Let $a\in\Gamma_{A_p,\rho}^{*,\infty}(\RR^{2d})$ be a hypoelliptic
real-valued symbol such that $|a(w)|\rightarrow \infty$ as
$|w|\rightarrow \infty$ and let $A$ be the unbounded
operator on $L^2(\RR^d)$ defined by $a^w$. Then the closure $\overline{A}$ of $A$
is a self-adjoint operator having spectrum given by a sequence of
real eigenvalues either diverging to $+\infty$ or to $-\infty$
according to the sign of $a$ at infinity. The eigenvalues have
finite multiplicities and the eigenfunctions belong to
$\SSS^*(\RR^d)$. Moreover, $L^2(\RR^d)$ has an orthonormal basis
consisting of eigenfunctions of $\overline{A}$.
\end{proposition}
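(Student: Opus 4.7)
The plan is to follow the classical strategy for hypoelliptic self-adjoint operators, carrying out all arguments within the $\Gamma^{*,\infty}_{A_p,\rho}$-calculus established above. Self-adjointness of $\overline{A}$ is already given by the discussion at the end of Subsection \ref{sub realisations}, so the main task is to show that $(\overline{A}-\lambda)^{-1}$ is compact on $L^2(\RR^d)$ for one (hence every) $\lambda\in\CC\setminus\RR$; the spectral theorem for self-adjoint operators with compact resolvent will then supply the orthonormal eigenbasis, the discreteness of the spectrum, and the finite multiplicities.

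To obtain the compact resolvent, I would invoke Remark \ref{ktv957939}: because $|a(w)|\to\infty$, there exists $q\in\Gamma^{*,\infty}_{A_p,\rho}(\RR^{2d})$ with $q^w$ compact on $L^2$ and with $q^wa^w=\mathrm{Id}+T_1$, $a^wq^w=\mathrm{Id}+T_2$ for $*$-regularising $T_1,T_2$. Since $\SSS^*(\RR^d)$ is a Montel space continuously embedded in $L^2(\RR^d)$, every $*$-regularising operator sends the unit ball of $L^2$ to a bounded (hence relatively compact) subset of $\SSS^*(\RR^d)$, and therefore $T_1,T_2$ are compact on $L^2(\RR^d)$. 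Applying $q^w$ to $(\overline{A}-\lambda)(\overline{A}-\lambda)^{-1}=\mathrm{Id}$ (whose range lies in $\mathrm{Dom}(\overline{A})$) and using $q^w\overline{A}=\mathrm{Id}+T_1$ on that domain yields
$$
(\overline{A}-\lambda)^{-1}=q^w-T_1(\overline{A}-\lambda)^{-1}+\lambda\,q^w(\overline{A}-\lambda)^{-1},
$$
which is a sum of compositions of compact and bounded operators; hence $(\overline{A}-\lambda)^{-1}$ is compact.

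Regularity of the eigenfunctions then follows from the $\SSS^*$-regularity of hypoelliptic operators: if $u\in L^2(\RR^d)\subseteq\SSS'^*(\RR^d)$ with $\overline{A}u=\lambda u$, then $(a-\lambda)^wu=0\in\SSS^*(\RR^d)$. The symbol $a-\lambda$ still satisfies the hypoellipticity bounds (\ref{dd1})--(\ref{dd2}) on a suitably enlarged $Q^c_{B'}$ (since $|a(w)|\to\infty$ forces $|a(w)-\lambda|\geq|a(w)|/2$ for large $|w|$, while the derivative estimates of order $\geq 1$ are unchanged), so global $\SSS^*$-regularity delivers $u\in\SSS^*(\RR^d)$.

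For the direction of divergence, the lower bound $|a(w)|\geq c\,e^{-M(m|w|)}>0$ on $Q^c_B$ shows that $a$ is nonvanishing on the connected set $\RR^{2d}\setminus Q_B$, so $a$ has constant sign there; combined with $|a(w)|\to\infty$ this forces $a(w)\to+\infty$ or $a(w)\to-\infty$. Assume $a\to+\infty$ without loss of generality; then $a$ is bounded below by some $-C_0$ on $\RR^{2d}$, and for $\mu<-C_0$ the shifted symbol $a-\mu$ is pointwise positive and still hypoelliptic. One then applies a sharp G\aa rding inequality within the $\Gamma^{*,\infty}_{A_p,\rho}$-calculus (or, equivalently, constructs a positive square-root symbol modulo a $*$-regularising remainder) to deduce that $\overline{A}$ is bounded below on $L^2(\RR^d)$, which, together with the unbounded discreteness of the spectrum, forces $\lambda_n\to+\infty$. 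I expect the main technical obstacle to be precisely this G\aa rding/square-root step in the infinite order, sub-exponentially growing symbol setting, which has no immediate counterpart in the classical finite order Shubin calculus.
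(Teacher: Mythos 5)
You should first note that this paper never proves Proposition \ref{discretness_of_spe}: it is quoted verbatim from \cite[Proposition 4.6]{PPV-BMS}, so your argument can only be judged against the toolkit recalled in Section \ref{section preli}. Measured that way, the bulk of your proposal is correct and is the natural route: self-adjointness is exactly the statement closing Subsection \ref{sub realisations}; the compactness on $L^2(\RR^d)$ of $q^w$ and of the $*$-regularising remainders (the unit ball of $L^2$ is bounded in $\SSS'^*(\RR^d)$, its image is bounded, hence relatively compact, in the Montel space $\SSS^*(\RR^d)$) is legitimate; your resolvent identity is algebraically correct on $\mathrm{Dom}(\overline{A})$, since $\overline{A}$ is the restriction of $a^w$ acting on $\SSS'^*(\RR^d)$; and the $\SSS^*$-regularity of eigenfunctions via hypoellipticity of $a-\lambda$ (using $|a(w)|\to\infty$ to keep the lower bound) is fine, as is the sign-constancy argument on the connected set $Q_B^c$.

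The genuine gap is the final step. The assertion that the eigenvalues diverge to $+\infty$ (when $a\to+\infty$) and not merely $|\lambda_j|\to\infty$ is equivalent to semiboundedness of $\overline{A}$ from below, and you rest it on a ``sharp G\aa rding inequality in the $\Gamma^{*,\infty}_{A_p,\rho}$-calculus'' which is proved nowhere in this paper or in the results it recalls, and which you yourself flag as the main obstacle without resolving it. The parenthetical alternative, ``construct a positive square-root symbol modulo a $*$-regularising remainder,'' does not work in one step as phrased: with $c_0=(a-\mu)^{1/2}$ one only gets $c_0\#c_0-(a-\mu)$ subordinate to $(a-\mu)\langle\cdot\rangle^{-2\rho}$, an in general unbounded perturbation, so no lower bound follows directly from positivity of $c_0$. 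What would actually close the gap within the recalled machinery is the full iterative construction: the symbol estimates for $(a-\mu)^{1/2}$ follow as in Lemma \ref{lskvpc135} (via \cite[Remark 7.6]{PP1}); one then solves recursively for real-valued corrections $c_j$ (real by Remark \ref{real-valuedsym}) so that $(\sum_j c_j)\#(\sum_j c_j)\sim a-\mu$ with $\sum_j c_j\precsim (a-\mu)^{1/2}$, realises a real symbol $c=R(\sum_j c_j)$ by Proposition \ref{subexistest}, and applies Theorem \ref{weylq} to obtain $(c^w)^2=(a-\mu)^w+T$ with $T$ bounded on $L^2(\RR^d)$, whence $((a-\mu)^w\varphi,\varphi)\geq -C\|\varphi\|^2_{L^2}$ for $\varphi\in\SSS^*(\RR^d)$ and the spectrum is bounded below; alternatively, one can show that all sufficiently negative real $\lambda$ lie in the resolvent set by running the parametrix construction for $a-\lambda$ with bounds uniform in $\lambda$ and remainder of small $L^2$-norm as $\lambda\to-\infty$. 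Until one of these arguments is carried out, the direction of divergence of the eigenvalues remains unproven in your write-up.
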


For such $a$ tending to $+\infty$ as $|w|\rightarrow\infty$, under some additional hypothesis on its growth, there is an explicit formula for the asymptotical behaviour of the eigenvalue counting function of $\bar{A}$,
$
N(\lambda)=\sum_{\lambda_j\leq \lambda}1=\#\{j\in\mathbb{N}|\,\lambda_{j}\leq \lambda\},
$
where $\lambda_{0}\leq \lambda_{1}\leq \lambda_{2}\leq \dots\leq \lambda_{j}\leq \dots$ are the eigenvalues of $\bar{A}$ with multiplicities taken into account. To be precise, let $f:\RR\rightarrow\RR$ be positive, strictly increasing, of ultrapolynomial growth of class $*$ on some interval $[Y,\infty)$, for some $Y>0$, and absolutely continuous on each compact subinterval of $[Y,\infty)$. Denote $\sigma(\lambda)=(f^{-1}(\lambda))^{2d}$ for large $\lambda>0$.

\begin{theorem}[{\cite[Theorem 5.1]{PPV-BMS}}]\label{Weylth1}
Let $a\in\Gamma_{A_p,\rho}^{*,\infty}(\RR^{2d})$ be hypoelliptic, let
 $f$ satisfy
\beq
\label{weyleq2}
\lim_{y\to\infty} \frac{yf'(y)}{f(y)}=\infty,
\eeq
and let $\Phi$ be a positive continuous function on the sphere $\mathbb{S}^{2d-1}$.
Suppose that for each $\varepsilon\in (0,1)$ there are positive constants $c_{\epsilon},C_{\epsilon},B_{\epsilon}>0$ such that
\beq
\label{weyleq3}
c_{\varepsilon}f((1-\varepsilon) r \Phi(\vartheta))\leq a(r\vartheta)\leq C_{\varepsilon}f((1+\varepsilon) r \Phi(\vartheta)),
\eeq
for all $r\geq B_{\varepsilon}$ and $\vartheta\in\mathbb{S}^{2d-1}$. Then, $\lambda_{j}=f\left(\gamma j^{\frac{1}{2d}}(1+o(1))\right)$, $j\to\infty$, and
\beqs
\lim_{\lambda\to\infty}\frac{N(\lambda)}{\sigma(\lambda)}= \frac{\pi}{(2\pi)^{d+1}d}\int_{\mathbb{S}^{2d-1}}\frac{d\vartheta}{(\Phi(\vartheta))^{2d}}\:,
\eeqs
with $\gamma=\sqrt{2\pi}\cdot (2d/\int_{\mathbb{S}^{2d-1}}(\Phi(\vartheta))^{-2d}d\vartheta)^{\frac{1}{2d}}$.
Moreover, for each $h'<\gamma<h$,
\beqs
\lim_{j\to\infty}\frac{\lambda_{j}}{f(h' j^{\frac{1}{2d}})}=\infty
\quad \mbox{and} \quad \lim_{j\to\infty}\frac{\lambda_{j}}{f(h j^{\frac{1}{2d}})}=0.
\eeqs
\end{theorem}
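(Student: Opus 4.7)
The plan is to reduce the counting of eigenvalues to a phase-space volume computation. By (\ref{weyleq3}), $a(w)\to+\infty$ as $|w|\to\infty$, so Proposition~\ref{discretness_of_spe} applies: $\overline{A}$ has purely discrete real spectrum $\lambda_{0}\le\lambda_{1}\le\dots\to+\infty$, and it suffices to obtain the asymptotic of $N(\lambda)$.

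The first step is to compute the phase-space volume $V(\lambda)=|\{w\in\RR^{2d}:a(w)\le\lambda\}|$. Writing $w=r\vartheta$ in polar coordinates and inverting (\ref{weyleq3}) (using that $f$ is strictly increasing on $[Y,\infty)$), the level set $\{a\le\lambda\}$ is squeezed between two sets of the form $\{r\Phi(\vartheta)\le(1\pm\varepsilon)^{-1}f^{-1}(\lambda/c)\}$. Direct integration then gives
\[
V(\lambda) \sim \frac{\sigma(\lambda)}{2d}\int_{\mathbb{S}^{2d-1}}\frac{d\vartheta}{\Phi(\vartheta)^{2d}},\qquad \lambda\to\infty,
\]
the crucial input being the rapid-growth hypothesis (\ref{weyleq2}): it forces $f^{-1}(c\lambda)/f^{-1}(\lambda)\to 1$ for every $c>0$, so the constants $c_{\varepsilon},C_{\varepsilon}$ of (\ref{weyleq3}) collapse into a $(1+o(1))$-correction as $\varepsilon\to 0$ after $\lambda\to\infty$.

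The heart of the argument is then the Weyl volume identity $N(\lambda)\sim(2\pi)^{-d}V(\lambda)$, which combined with the first step yields the claimed limit for $N(\lambda)/\sigma(\lambda)$. I would establish it by a Courant--Fischer sandwich: for each small $\varepsilon>0$, construct hypoelliptic symbols $a_{\varepsilon}^{\pm}\in\Gamma^{*,\infty}_{A_p,\rho}(\RR^{2d})$ that are smooth extensions of $f((1\mp\varepsilon)r\Phi(\vartheta))$ outside a large $Q_{B_{\varepsilon}}$ and satisfy $a_{\varepsilon}^{-}\le a\le a_{\varepsilon}^{+}$ there. Theorem~\ref{weylq}, Proposition~\ref{parametweyl} and Remark~\ref{kth995559} then furnish parametrices whose differences are $*$-regularising, and a min-max argument transfers the pointwise sandwich into $N_{a_{\varepsilon}^{-}}(\lambda)(1-o(1))\le N(\lambda)\le N_{a_{\varepsilon}^{+}}(\lambda)(1+o(1))$. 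For the comparison operators $a_{\varepsilon}^{\pm}$ the Weyl volume law is already available (from the classical polynomially-bounded Shubin setting together with its subexponential extension in the authors' earlier work), yielding $N_{a_{\varepsilon}^{\pm}}(\lambda)\sim(2\pi)^{-d}V_{a_{\varepsilon}^{\pm}}(\lambda)$; letting $\varepsilon\to 0$ closes the estimate. The eigenvalue asymptotic $\lambda_{j}=f(\gamma j^{1/(2d)}(1+o(1)))$ then follows by inverting $N(\lambda_{j})\sim j$, and the two strict limits $\lambda_{j}/f(h'j^{1/(2d)})\to\infty$ and $\lambda_{j}/f(hj^{1/(2d)})\to 0$ for $h'<\gamma<h$ follow from monotonicity of $f$ together with (\ref{weyleq2}), which forces $f(hj^{1/(2d)})/f(h'j^{1/(2d)})\to\infty$ whenever $h>h'$.

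The main obstacle is the construction of $a_{\varepsilon}^{\pm}$ inside $\Gamma^{*,\infty}_{A_p,\rho}$ together with the transfer of the pointwise sandwich into a spectral one: the bounds (\ref{weyleq3}) are pointwise only and carry no derivative information, so the symbols must be mollified using the ring and power-series machinery sketched in the introduction, and the resulting $*$-regularising remainders produced by the parametrix construction (Proposition~\ref{parametweyl} and Remark~\ref{ktv957939}) must be shown to be lower order with respect to $\sigma(\lambda)$. This is precisely where (\ref{weyleq2}) is decisive: since $\sigma(\lambda)=(f^{-1}(\lambda))^{2d}$ grows faster than any polynomial in $\lambda$, any polynomial loss coming from the symbolic calculus gets absorbed into the $(1+o(1))$-correction, and the leading asymptotic survives intact.
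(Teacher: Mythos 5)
Before anything else, note that this paper does not actually prove Theorem \ref{Weylth1}: it is quoted verbatim from \cite[Theorem 5.1]{PPV-BMS} and used as a black box (e.g.\ in Theorem \ref{thExWeyl}), so there is no in-paper proof to compare against and your argument has to stand on its own. Its outer layers do: the volume computation is correct, since (\ref{weyleq2}) gives $f(y_2)/f(y_1)\geq (y_2/y_1)^k$ for every $k$ and large $y_1<y_2$, hence $f^{-1}(c\lambda)/f^{-1}(\lambda)\to1$ for each $c>0$, and the same estimate yields $f(hy)/f(h'y)\to\infty$ for $h>h'$; likewise the final inversion of $N(\lambda_j)\sim j$ and the absorption of bounded or $*$-regularising perturbations (because $f^{-1}(\lambda+C)/f^{-1}(\lambda)\to1$) are routine. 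The problem is that the step you yourself call the heart of the argument is either circular or unsupported. Your comparison symbols $a^{\pm}_{\varepsilon}\approx f((1\mp\varepsilon)r\Phi(\vartheta))$ are themselves of infinite order --- by (\ref{weyleq2}), $f$ outgrows every polynomial --- so their Weyl law is not ``already available from the classical polynomially-bounded Shubin setting'', and the ``subexponential extension in the authors' earlier work'' you appeal to is precisely \cite[Theorem 5.1]{PPV-BMS}, i.e.\ the statement being proved. Proving the Weyl asymptotics for this special radial family is the main content of the theorem; a legitimate route would be to realise the comparison operators through the functional calculus of a fixed finite-order elliptic Shubin operator $Q$ with symbol $\asymp r\Phi(\vartheta)$, so that $N_{f(cQ)}(\lambda)=N_{Q}(f^{-1}(\lambda)/c)$ by spectral mapping, and then show that the discrepancy between $f(cQ)$ and the Weyl quantisation of $f(c\,r\Phi)$ is spectrally negligible --- but that comparison is a substantial piece of analysis (Section \ref{powerseriesofop} of this paper carries out an analogue only for $P\circ a$ with $a$ a polynomial), not something one can cite away.

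The second gap is the transfer of the pointwise sandwich $a^{-}_{\varepsilon}\leq a\leq a^{+}_{\varepsilon}$ (valid only outside a compact set, with no derivative information) into an inequality between counting functions. The tools you invoke --- Theorem \ref{weylq}, Proposition \ref{parametweyl}, Remark \ref{kth995559} --- provide composition formulas and parametrices modulo $*$-regularising operators; they contain no positivity statement, and pointwise domination of Weyl symbols does not imply operator ordering, so Courant--Fischer cannot be applied as written. One needs an actual positivity mechanism in these infinite-order classes: an anti-Wick (positive) quantisation together with control of the Weyl/anti-Wick difference, a sharp G\aa rding-type inequality, or a quadratic-form comparison through the functional calculus of the finite-order operator $Q$ above. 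Until both of these points --- the Weyl law for the comparison family and the symbol-to-operator monotonicity --- are supplied, the proposal is a plausible plan rather than a proof.
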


Note that Theorem \ref{Weylth1} deals with operators which are truly of infinite order because integration of (\ref{weyleq2}) gives that $\langle w \rangle^{\beta}=o(a(w))$ for any $\beta>0$.

\section{Power series of Shubin type differential operators}\label{powerseriesofop}

Our main goal in this section is to study operators given as power series of elliptic Shubin differential operators and, more importantly, their ``lower order'' perturbations. The symbolic calculus which is recalled above is an effective tool for studying such operators. In fact, under certain conditions on the coefficients appearing in the power series, we will prove that these are in fact $\Gamma_{A_p,\rho}^{*,\infty}$-hypoelliptic which in turn yields their global ultra-regularity. Furthermore, by applying Theorem \ref{Weylth1}, we will explicitly give the asymptotic behaviour of their eigenvalue counting function. The motivating example presented in the introduction is the case of power series of the harmonic oscillator.

\subsection{The iterated $\#$-product of polynomial symbols}

\begin{lemma}\label{shproduct}
Let $a\in\Gamma^{*,\infty}_{A_p,\rho}(\RR^{2d})$. Then, for each $n\in\ZZ_+$, $n\geq 2$, and $j\in\NN$, we have\footnote{See Remark \ref{ktrsln159} for the meaning of $a^{\# n}$.}
\beqs
(a^{\# n})_j(w)&=&\sum_{s_1+\ldots+s_{n-1}=j}\,\, \sum_{|\boldsymbol{\alpha}^{1}+\boldsymbol{\beta}^{1}|=s_1,\ldots, |\boldsymbol{\alpha}^{n-1}+\boldsymbol{\beta}^{n-1}|=s_{n-1}} \frac{(-1)^{|\tilde{\boldsymbol{\beta}}|}} {\tilde{\boldsymbol{\alpha}}!\tilde{\boldsymbol{\beta}}!(2i)^j}\\
&{}&\cdot\prod_{l=1}^n \partial^{\beta^{l-1,1}+\ldots+\beta^{l-1,l-1}+\alpha^{l,l}+\ldots+\alpha^{n-1,l}}_{\xi} \partial^{\alpha^{l-1,1}+\ldots+\alpha^{l-1,l-1}+\beta^{l,l}+\ldots+\beta^{n-1,l}}_x a(w),
\eeqs
where $\boldsymbol{\alpha}^l=(\alpha^{l,1},\ldots,\alpha^{l,l})\in\NN^{dl}$, $\boldsymbol{\beta}^l=(\beta^{l,1},\ldots,\beta^{l,l})\in\NN^{dl}$, for $l=1,\ldots,n-1$, and $\tilde{\boldsymbol{\alpha}}=(\boldsymbol{\alpha}^1,\ldots,\boldsymbol{\alpha}^{n-1})\in \NN^{dn(n-1)/2}$, $\tilde{\boldsymbol{\beta}}=(\boldsymbol{\beta}^1,\ldots,\boldsymbol{\beta}^{n-1})\in \NN^{dn(n-1)/2}$. Furthermore, $(a^{\# n})_0(w)=(a(w))^n$. If $a$ is real-valued, then $(a^{\#n})_j$ are real-valued for all $j\in\NN$.\\
\indent If $a$ is a polynomial of degree $m\in\ZZ_+$ in the variable $w$, then $(a^{\# n})_j=0$ for all $j>[nm/2]$, and the polynomial $(a^{\# n})_j$ has degree at most $nm-2j$, $j=1,\ldots, [nm/2]$.
\end{lemma}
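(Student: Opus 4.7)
The natural strategy is induction on $n$, with base case $n=2$ being an immediate rewrite of the definition of $\#$ (noting that $D^\beta_x = i^{-|\beta|}\partial^\beta_x$ converts the factor $1/2^l$ into $1/(2i)^l$). For the inductive step, use the associativity of $\#$ from Proposition \ref{hyporingg} to write $a^{\# (n+1)} = a\# a^{\# n}$, apply the definition of $\#$ to the outer product, and insert the formula for $(a^{\# n})_k$ given by the inductive hypothesis. The Leibniz rule must then be applied to the two $\partial^\alpha_\xi$ and $\partial^\beta_x$ derivatives that hit the product of $n$ factors comprising $(a^{\# n})_k$, and the resulting summation must be reindexed to match the lemma's formula. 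Concretely, if the outer $\#$ contributes multi-indices $\gamma,\delta$ (with $|\gamma+\delta|=l$, $k+l=j$), one distributes $\gamma = \gamma_1+\cdots+\gamma_n$ among the $n$ factors using Leibniz; the identifications $\alpha^{n,1},\ldots,\alpha^{n,n}$ correspond to these distributed pieces, while $\boldsymbol{\alpha}^1,\ldots,\boldsymbol{\alpha}^{n-1}$ come from the inductive formula. Verifying that the $\gamma!/(\gamma_1!\cdots\gamma_n!)$ multinomial coefficients combine with the $1/\gamma!$ from the outer $\#$ to yield the stated $1/(\tilde{\boldsymbol{\alpha}}!\tilde{\boldsymbol{\beta}}!)$ factor, and similarly that $(-1)^{|\delta|}\cdot(-1)^{|\tilde{\boldsymbol{\beta}}|_{\text{old}}} = (-1)^{|\tilde{\boldsymbol{\beta}}|_{\text{new}}}$ and $(2i)^l(2i)^k=(2i)^j$, is the combinatorial heart of the argument and the main obstacle — everything else is straightforward bookkeeping. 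The identity $(a^{\# n})_0 = a^n$ is immediate from the formula by taking all multi-indices zero (or by a trivial induction since the zeroth term of $\#$ is pointwise multiplication on the leading terms).

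For the real-valuedness claim, the cleanest route avoids inspecting the formula (where the factor $1/(2i)^j$ obscures reality for odd $j$) and instead argues by induction using Remark \ref{real-valuedsym}. Since $a \# a = a \# a$ trivially, the remark immediately gives that $(a^{\# 2})_j$ is real-valued for all $j$. Assuming $(a^{\# n})_j$ is real-valued, associativity gives $a \# a^{\# n} = a^{\# n} \# a$, so a second application of Remark \ref{real-valuedsym} (with $a_0=a$, $b_k=(a^{\# n})_k$, all real-valued) yields that $(a^{\# (n+1)})_j$ is real-valued.

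For the polynomial claim, induct once more on $n$. If $a$ has degree at most $m$, then $\partial^\alpha_\xi\partial^\beta_x a$ has degree at most $m-|\alpha|-|\beta|$ (and is zero if $|\alpha|+|\beta|>m$). The base case $n=2$ follows directly: in $(a^{\# 2})_j$ we have $|\alpha|+|\beta|=j$, so each of the two factors has degree at most $m-j$, hence the product has degree at most $2m-2j$, which forces $(a^{\# 2})_j=0$ when $j>m=[2m/2]$. For the inductive step, in the expression $(a^{\# (n+1)})_j = (a\# a^{\# n})_j$ we sum over $k+l=j$ and $|\gamma+\delta|=l$; by the inductive hypothesis $(a^{\# n})_k$ is a polynomial of degree at most $nm-2k$, so $\partial^\delta_\xi\partial^\gamma_x (a^{\# n})_k$ has degree at most $nm-2k-l$, while $\partial^\gamma_\xi\partial^\delta_x a$ has degree at most $m-l$. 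The product thus has degree at most $(n+1)m-2(k+l)=(n+1)m-2j$, which is negative and hence forces vanishing precisely when $j>[(n+1)m/2]$, completing the induction.
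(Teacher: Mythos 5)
Your proposal is correct and follows essentially the same route as the paper: the formula is obtained by induction on $n$ starting from the definition of $\#$ (with the $D_x\to\partial_x$ conversion producing the $(2i)^j$), the identity $(a^{\# n})_0=a^n$ is read off the formula, real-valuedness comes from Remark \ref{real-valuedsym} combined with the associativity supplied by Proposition \ref{hyporingg} (so that $a\# a^{\# n}=a^{\# n}\# a$), and the degree bound $nm-2j$ forces the vanishing for $j>[nm/2]$. The only cosmetic deviation is that you derive the degree bound by induction through the decomposition $a^{\#(n+1)}=a\# a^{\# n}$, whereas the paper gets it directly from the explicit formula by observing that each multi-index $\alpha^{l,k}$, $\beta^{l,k}$ occurs exactly twice in the product; both arguments are equally valid.
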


\begin{remark}
In the above formula we apply the principle of vacuous summation in the summation of the multi-indices $\alpha$ and $\beta$ in the derivatives in $x$ and $\xi$. This means that when $l=1$ the corresponding term is just $\partial^{\alpha^{1,1}+\ldots+\alpha^{n-1,1}}_{\xi} \partial^{\beta^{1,1}+\ldots+\beta^{n-1,1}}_x a(w)$; similarly when $l=n$.
\end{remark}

\begin{proof} To prove the formula for $(a^{\# n})_j$, notice that when $n=2$ this is nothing else but the definition of $(a\# a)_j$, $j\in\NN$. The proof can be done by induction on $n$. The fact $(a^{\# n})_0(w)=(a(w))^n$ follows directly from the formula. The fact that $(a^{\# n})_j$, $j\in\NN$, are real-valued when such is $a$ readily follows from Remark \ref{real-valuedsym} and Proposition \ref{hyporingg}.\\
\indent Assume that $a$ is a polynomial of degree $m\in\ZZ_+$. Fix $j\in\ZZ_+$ and notice that the degree of each term in the product that appears in $(a^{\# n})_j$ is at most
\begin{multline*}
m-|\beta^{l-1,1}+\ldots+\beta^{l-1,l-1}+\alpha^{l,l}+\ldots+\alpha^{n-1,l}|\\
-|\alpha^{l-1,1}+\ldots+\alpha^{l-1,l-1}+\beta^{l,l}+\ldots+\beta^{n-1,l}|,\quad \forall l=1,\ldots, n.
\end{multline*}
If we sum these quantities and notice that each multi-index $\alpha^{l,k}$ and $\beta^{l,k}$, $1\leq k\leq l\leq n-1$, appears exactly twice, we obtain that the degree of $(a^{\# n})_j$ is at most $nm-2j$. This implies the second part of the lemma.
\end{proof}

Let $a$ be a polynomial in $w$ of degree $m\in\ZZ_+$. Defining
\beq\label{kktlt17}
a^{(\# n)}(w)=\sum_{k=0}^{\infty}(a^{\# n})_k(w)=\sum_{k=0}^{[nm/2]}(a^{\# n})_k(w),
\eeq
Lemma \ref{shproduct} proves that $a^{(\# n)}$ is a polynomial in $w$ of degree $nm$. As a simple, but important, consequence of Lemma \ref{shproduct} we deduce that $\sum_{k=0}^{\infty}(a^{(\# n)}\# a)_k=a^{(\# (n+1))}$, $n\in\ZZ_+$, $n\geq 2$. Of course, this formula remains valid even for $n=1$ if we put $a^{(\# 1)}=a^{\# 1}=a$ (cf. Remark \ref{ktrsln159}). The importance of this observation lies in the following fact.

\begin{lemma}\label{ktn557713}
Let $a$ be a polynomial in $w$ of degree $m\in\ZZ_+$. With the same notations as above, we have $(a^w)^n=(a^{(\#n)})^w$, $\forall n\in\ZZ_+$. Furthermore, if $a$ is real-valued, then the same holds for $a^{(\# n)}$ for all $n\in\ZZ_+$.
\end{lemma}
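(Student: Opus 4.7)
The plan is to prove $(a^w)^n = (a^{(\#n)})^w$ by induction on $n$, with the base case $n=1$ being the tautology $a^w = a^w$. For the inductive step I would write
$$
(a^w)^{n+1} = (a^{(\#n)})^w \cdot a^w
$$
using the induction hypothesis, and then identify the right-hand side with $(a^{(\#(n+1))})^w$ via the symbolic calculus.

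First, Lemma~\ref{shproduct} guarantees that $a^{(\#n)}$, being a finite sum of polynomials, is itself a polynomial in $w$ of degree at most $nm$, so both $a^{(\#n)}$ and $a$ lie in $\Gamma^{*,\infty}_{A_p,\rho}(\RR^{2d})$. Applying Theorem~\ref{weylq} to these two symbols yields that $(a^{(\#n)})^w \cdot a^w - c^w$ is $*$-regularising for any $c \in \Gamma^{*,\infty}_{A_p,\rho}(\RR^{2d})$ satisfying $c \sim a^{(\#n)} \# a$. By the consequence of Lemma~\ref{shproduct} already recorded in the paper, $\sum_{k} (a^{(\#n)} \# a)_k = a^{(\#(n+1))}$, and the sum is in fact finite since all entries are polynomials; so I may take $c = a^{(\#(n+1))}$. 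Thus the operator
$$
T := (a^w)^{n+1} - (a^{(\#(n+1))})^w
$$
is $*$-regularising.

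The key step is then to argue that $T = 0$. Note that $T$ is a difference of two differential operators with polynomial coefficients (each factor $(a^{(\#k)})^w$ is of this form because $a^{(\#k)}$ is a polynomial), hence itself a polynomial-coefficient differential operator. If $T$ were nonzero, applying it to suitably chosen monomials $x^\beta$ of increasing degree would produce a nonzero polynomial; yet $*$-regularity forces the output to lie in $\SSS^*(\RR^d)$, and a nonzero polynomial is never an element of $\SSS^*(\RR^d)$. Therefore $T = 0$, giving $(a^w)^{n+1} = (a^{(\#(n+1))})^w$. The real-valuedness of $a^{(\#n)}$ when $a$ is real-valued follows immediately from the corresponding statement in Lemma~\ref{shproduct}, so no separate induction is needed there.

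I expect the main (though fairly mild) obstacle to be the rigorous justification that a polynomial-coefficient differential operator cannot be $*$-regularising unless it vanishes; the rest is a direct combination of the inductive hypothesis, Theorem~\ref{weylq}, and the identity $\sum_k (a^{(\#n)} \# a)_k = a^{(\#(n+1))}$ already available from Lemma~\ref{shproduct}. An alternative to the non-regularising argument would be to invoke the classical fact that the Moyal expansion is exact for polynomial symbols, which gives the identity directly without passing through an error term; but the approach above has the advantage of staying entirely within the calculus developed in the preliminaries.
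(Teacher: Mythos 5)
Your proposal is correct, but it takes a genuinely different route from the paper. The paper settles the inductive step in one stroke by invoking the exactness of the Weyl composition formula for polynomial symbols (\cite[Theorem 1.2.17, p.~34]{NR}): for polynomials $b_1,b_2$ one has $b_1^wb_2^w=\bigl(\sum_{k}(b_1\# b_2)_k\bigr)^w$ with no remainder, so $(a^{(\#n)})^wa^w=(a^{(\#(n+1))})^w$ follows at once and the induction closes; this is exactly the ``alternative'' you mention in your last sentence. You instead stay inside the infinite-order calculus: the composition statement preceding Theorem~\ref{weylq} (that $a^wb^w-c^w$ is $*$-regularising whenever $c\sim a\# b$) gives that $T=(a^w)^{n+1}-(a^{(\#(n+1))})^w$ belongs to $\mathcal{L}(\SSS'^*(\RR^d),\SSS^*(\RR^d))$, and you then kill $T$ by the rigidity observation that a nonzero differential operator with polynomial coefficients cannot map all monomials into $\SSS^*(\RR^d)$ (choosing $\alpha_0$ minimal with $c_{\alpha_0}\neq 0$ and testing on $x^{\alpha_0}$ makes this precise, since a nonzero polynomial never lies in $\SSS^*(\RR^d)$). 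The paper's route is shorter because it borrows an exact classical identity; yours buys self-containedness within the calculus of Section~\ref{section preli} at the price of the extra rigidity lemma. To make your version airtight you should record two small points you currently pass over: (i) that $a^{(\#(n+1))}$, viewed through the canonical inclusion, is indeed equivalent, in the sense of the relation $\sim$ recalled in Subsection~\ref{sub calculus}, to the formal series $a^{(\#n)}\#a$ --- immediate, since the $N$-th tail $\sum_{j\geq N}(a^{(\#n)}\#a)_j$ is a polynomial of degree at most $(n+1)m-2N$ which vanishes for $N>[(n+1)m/2]$, so only finitely many of the required estimates are non-trivial and each is a polynomial-against-exponential bound; and (ii) that the extension of these quantisations to $\SSS'^*(\RR^d)$ coincides on polynomials with the classical action of the corresponding polynomial-coefficient differential operators, so that $Tx^\beta$ is legitimately computed by the differential expression. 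The real-valuedness statement is handled the same way in both arguments, via Lemma~\ref{shproduct} and \eqref{kktlt17}.
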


\begin{proof} If $b_1$ and $b_2$ are polynomials in $w$, then \cite[Theorem 1.2.17, p. 34]{NR}, especially its proof, implies $b_1^wb_2^w=(\sum_{k=0}^{\infty} (b_1\# b_2)_k)^w$. Notice the sum is finite and it is a polynomial. For $n=2$, the claim in the lemma directly follows from this fact. Arguing by induction on $n$, and using the same fact, we conclude $(a^{(\# n)})^wa^w=(\sum_{k=0}^{\infty} (a^{(\# n)}\# a)_k)^w=(a^{(\#(n+1))})^w$. The very last equality follows from the fact we just proved before this lemma. Notice that the claim in the lemma is trivial for $n=1$. To finish the proof, when $a$ is real-valued $a^{(\# n)}$ is also real-valued by Lemma \ref{shproduct} and (\ref{kktlt17}).
\end{proof}

\subsection{Series of polynomial symbols}\label{power series subsection}

Let $a_n$, $n\in\ZZ_+$, be polynomials in $w\in\RR^{2d}$ all of them of degree at most $m\in\ZZ_+$ satisfying the following estimate: there exists $C_1\geq 1$ such that
\beq\label{obk77155311}
|D^{\alpha}a_n(w)|\leq C_1\langle w\rangle^{m-|\alpha|},\,\, \forall w\in\RR^{2d},\, \forall\alpha\in\NN^{2d},\, \forall n\in\ZZ_+.
\eeq
For each $a_n$, $n\in\ZZ_+$, let $a^{(\# n)}_n$ denote the polynomial (\ref{kktlt17}); the degree of $a^{(\# n)}_n$ is at most $nm$. We additionally define $a_0^{(\# 0)}(w)=1$, $\forall w\in\RR^{2d}$.

\begin{proposition}\label{lemfor135}
Let $a_n$, $n\in\ZZ_+$, and $a^{(\# n)}_n$, $n\in\NN$, be as above. Let $\zeta_n$, $n\in\NN$, be a sequence of complex numbers such that $\zeta_0=1$ and there exist $C',L'>0$ (resp. for every $L'>0$ there exists $C'>0$) such that $|\zeta_n|\leq C'L'^n/M^m_n$, $\forall n\in\NN$. Then the following statements hold true.
\begin{itemize}
\item[$(i)$] The function $w\mapsto \sum_{n=0}^{\infty}|\zeta_n||a_n(w)|^n$, $\RR^{2d}\rightarrow [0,\infty)$, is continuous and of ultrapolynomial growth of class $*$.
\item[$(ii)$] The series $\sum_{n=0}^{\infty}\zeta_na^{(\# n)}_n(w)$ and $\sum_{n=0}^{\infty}\zeta_n(a_n(w))^n$ absolutely converge in $\Gamma^{*,\infty}_{A_p,1}(\RR^{2d})$.
\item[$(iii)$] The series $\sum_{n=0}^{\infty}\zeta_n(a_n^w)^n$ absolutely converges in the spaces $\mathcal{L}_b(\SSS^*(\RR^d),\SSS^*(\RR^d))$ and $\mathcal{L}_b(\SSS'^*(\RR^d),\SSS'^*(\RR^d))$. It is a pseudo-differential operator with Weyl symbol $\sum_{n=0}^{\infty}\zeta_n a^{(\# n)}\in \Gamma^{*,\infty}_{A_p,1}(\RR^{2d})$; that is
    \beqs
    \left(\sum_{n=0}^{\infty}\zeta_na^{(\# n)}_n\right)^w=\sum_{n=0}^{\infty}\zeta_n(a_n^w)^n.
    \eeqs
\end{itemize}
\end{proposition}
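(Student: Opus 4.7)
My plan is to reduce all three parts to uniform polynomial-derivative bounds on $a_n^{(\#n)}$ and $(a_n)^n$ that, when paired with the decay $|\zeta_n|\leq C'L'^n/M_n^m$, fit into the seminorms of $\Gamma^{*,\infty}_{A_p,1}(\RR^{2d})$. For part $(iii)$, Lemma \ref{ktn557713} identifies $(a_n^w)^n$ with $(a_n^{(\#n)})^w$, so the operator series becomes the Weyl quantisation of the symbol series of $(ii)$; continuity of the Weyl quantisation from $\Gamma^{*,\infty}_{A_p,1}(\RR^{2d})$ into $\mathcal{L}_b(\SSS^*(\RR^d),\SSS^*(\RR^d))$ and into $\mathcal{L}_b(\SSS'^*(\RR^d),\SSS'^*(\RR^d))$ --- a standard consequence of the symbolic calculus reviewed in Section \ref{section preli} --- will then transfer absolute convergence from symbols to operators.

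The key technical step I would first carry out is the bound
\begin{equation*}
|D^\gamma (a_n^{\#n})_j(w)| \leq C_2^n C_3^{|\gamma|+j} A_j A_\gamma \langle w\rangle^{nm-2j-|\gamma|},
\end{equation*}
valid for $|\gamma|+2j\leq nm$, with $C_2,C_3>0$ depending only on $C_1$, $d$, and the $(M.2),(M.4)$ constants for $A_p$. Starting from the explicit formula in Lemma \ref{shproduct}, each factor $\partial_\xi^{\bullet}\partial_x^{\bullet}a_n(w)$ is bounded via (\ref{obk77155311}), producing the $\langle w\rangle^{nm-2j-|\gamma|}$-piece with an overall $C_1^n$; the combinatorial sum over the $n(n-1)/2$ pairs $\boldsymbol\alpha^l,\boldsymbol\beta^l$ and the compositions $s_1+\ldots+s_{n-1}=j$, weighted by $1/(\tilde{\boldsymbol\alpha}!\tilde{\boldsymbol\beta}!)$, is then redistributed into the factors $A_jA_\gamma$ using multinomial identities together with the log-convexity $(M.4)$ of $A_p/p!$. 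Summing over $j\leq[nm/2]$ yields $|D^\gamma a_n^{(\#n)}(w)|\leq C_4^n C_5^{|\gamma|}A_\gamma\langle w\rangle^{nm-|\gamma|}$; the analogous estimate $|D^\gamma (a_n)^n|\leq n^{|\gamma|}C_1^n\langle w\rangle^{nm-|\gamma|}$ is immediate from the multi-index Leibniz rule.

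Given these estimates, $(i)$ follows from $|a_n(w)|^n\leq C_1^n\langle w\rangle^{nm}$ together with the bound on $\zeta_n$, iterating $(M.2)$ for $M_p$ to replace $M_n^m$ by $M_{nm}$ up to an exponential factor in $n$, and using the classical bound $\sum_k\rho^k/M_k\leq Ce^{M(H'\rho)}$. The resulting inequality $\sum_n|\zeta_n||a_n(w)|^n\leq C_0 e^{M(K|w|)}$, with $K$ fixed (Beurling) or arbitrarily small as $L'\to 0$ (Roumieu), yields continuity by local uniform convergence and ultrapolynomial growth of class $*$. For $(ii)$, the same geometric argument applied to the derivative bound gives $\langle w\rangle^{|\gamma|}\sum_n|\zeta_n||D^\gamma a_n^{(\#n)}(w)|\leq C'' C_5^{|\gamma|}A_\gamma e^{M(K|w|)}$; using $A_\gamma\leq c_0 H^{|\gamma|}A_\alpha A_\beta$ from $(M.2)$ for $A_p$, this gives finiteness of the $\Gamma^{M_p,\infty}_{A_p,1}$-seminorms for the parameter choices dictated by the Beurling/Roumieu case. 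For $\sum\zeta_n(a_n)^n$, the analogous argument absorbs the extra $n^{|\gamma|}$ using $A_p\geq p!$ (a consequence of $(M.4)$) and the sub-exponential growth of the associated function of $A_p$ (implied by $(M.3)'$).

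The hard part will be the polynomial derivative estimate above: Lemma \ref{shproduct}'s formula is combinatorially intricate, involving $n-1$ compositions of $j$ and $n(n-1)/2$ pairs of multi-indices, and extracting clean factors $A_jA_\gamma$ --- as opposed to crude alternatives such as $|\gamma|!\,j!$ or $(nm)!$ --- requires careful redistribution of the $1/(\tilde{\boldsymbol\alpha}!\tilde{\boldsymbol\beta}!)$-weights using $(M.4)$ for $A_p$. Once this estimate is in hand, the remaining steps are standard weight-sequence manipulations combined with the Weyl symbolic calculus of Section \ref{section preli}.
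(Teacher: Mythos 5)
Your reduction of $(iii)$ to $(ii)$ via Lemma \ref{ktn557713} and the continuity of the Weyl quantisation is exactly the paper's route, and your treatment of $(i)$ and of the series $\sum_n\zeta_n(a_n)^n$ is sound. The gap is in the key consolidation step for $\sum_n\zeta_n a_n^{(\#n)}$: the claimed estimate $|D^\gamma a_n^{(\#n)}(w)|\leq C_4^nC_5^{|\gamma|}A_\gamma\langle w\rangle^{nm-|\gamma|}$, obtained by summing your per-$j$ bound over $j\leq[nm/2]$, is false. Your per-$j$ bound carries a factor $A_j$ (of size at least $j!$) with $j$ running up to $nm/2$, and this factorial growth is intrinsic, not an artefact of the estimate: for $a_n(w)=|w|^2$ (so $a_n^w=H$, the harmonic oscillator, $m=2$) Mehler's formula gives that the Weyl symbol of $H^n$ evaluated at $w=0$ is $\pm E_n$ (Euler numbers), of size about $n!(2/\pi)^n$, so no bound of the form $C_4^n\langle w\rangle^{nm}$ can hold at $w=0$, $\gamma=0$. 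Consequently the subsequent derivation of $(ii)$ for the $\#$-series, which feeds this false bound into the geometric summation against $|\zeta_n|\leq C'L'^n/M_n^m$, does not go through.

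The correct mechanism, and the heart of the paper's proof, is that the factorial growth in $j$ must be cancelled against $\zeta_n$ \emph{before} summing over $j$, and the weight $\langle w\rangle^{nm-2j}$ must be played against $M_{nm-2j}$ rather than crudely bounded by $\langle w\rangle^{nm}$. Concretely: keep the $1/j!$ coming from the combinatorial identity $\sum\frac{1}{\tilde{\boldsymbol\alpha}!\tilde{\boldsymbol\beta}!2^j}=\frac{d^jn^j(n-1)^j}{2^jj!}$ (do not replace $n^{2j}/j!$ by $e^{2n}A_j$); use $(M.2)$ to get $M_{nm}\leq c_0^{m-1}H^{nm(m+1)/2}M_n^m$, so $|\zeta_n|\lesssim \tilde L^n/M_{nm}$, and then $n^{|\gamma|+2j}\leq e^n(|\gamma|+2j)!\leq e^n2^{|\gamma|+2j}|\gamma|!(2j)!$ together with $(2j)!M_{nm-2j}\lesssim M_{2j}M_{nm-2j}\leq M_{nm}$ (log-convexity) to produce a factor $1/M_{nm-2j}$. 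The $j$-sum is then dominated by $e^{M(c L'^{1/m}\langle w\rangle)}$ times a convergent series (thanks to the retained $1/j!$), with an explicit $2^{-nm}$ left over to sum in $n$; the Beurling/Roumieu quantifiers come out of how $L'$ enters the constant $c$. With your organisation this cancellation is unavailable, so as written the proof of $(ii)$ — and hence of $(iii)$ — for the $a_n^{(\#n)}$ series fails.
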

\begin{remark}
The assumption on the sequence $\zeta_n$ is equivalent \cite[Proposition 4.5]{Komatsu1} to the growth estimate
$\left|\sum_{n=0}^{\infty}\zeta_n\lambda^n\right|\leq C_{h} e^{M(h |\lambda|^{1/m})}$, $\lambda\in \mathbb{C}$, for some $h>0$ (for each $h>0$).
\end{remark}
\begin{proof} Part $(i)$ follows directly from (\ref{obk77155311}). We prove $(ii)$. By Lemma \ref{shproduct}, for $n\in\ZZ_+$, $n\geq 2$, we have
\begin{align*}
D^{\gamma}_w& a^{(\# n)}_n(w)\\
&=\sum_{j=0}^{[nm/2]}\sum_{s_1+\ldots+s_{n-1}=j}\,\, \sum_{|\boldsymbol{\alpha}^{1}+\boldsymbol{\beta}^{1}|=s_1,\ldots, |\boldsymbol{\alpha}^{n-1}+\boldsymbol{\beta}^{n-1}|=s_{n-1}}\,\,  \sum_{\gamma^1+\ldots+\gamma^n=\gamma}\frac{(-1)^{|\tilde{\boldsymbol{\beta}}|}} {\tilde{\boldsymbol{\alpha}}!\tilde{\boldsymbol{\beta}}!(2i)^j}\cdot \frac{\gamma!}{\gamma^1!\ldots\gamma^n!}\\
&{}\quad\cdot\prod_{l=1}^n D^{\gamma^l}_w \partial^{\beta^{l-1,1}+\ldots+\beta^{l-1,l-1}+\alpha^{l,l}+\ldots+\alpha^{n-1,l}}_{\xi} \partial^{\alpha^{l-1,1}+\ldots+\alpha^{l-1,l-1}+\beta^{l,l}+\ldots+\beta^{n-1,l}}_x a_n(w).
\end{align*}
Each term in the above product is a polynomial with degree at most
\begin{multline*}
m-|\gamma^l|-|\beta^{l-1,1}+\ldots+\beta^{l-1,l-1}+\alpha^{l,l}+\ldots+\alpha^{n-1,l}|\\
- |\alpha^{l-1,1}+\ldots+\alpha^{l-1,l-1}+\beta^{l,l}+\ldots+\beta^{n-1,l}|,\quad \forall l=1,\ldots, n.
\end{multline*}
Summing these inequalities and noticing that each multi-index $\alpha^{l,k}$ and $\beta^{l,k}$, $1\leq k\leq l\leq n-1$, appears exactly twice, we obtain that the product has degree at most $nm-|\gamma|-2j$. Hence all the summands for $j> [(nm-|\gamma|)/2]$ are identically equal to zero. Thus, application of (\ref{obk77155311}) yields
\beqs
|D^{\gamma}_wa^{(\# n)}_n(w)|&\leq&\sum_{j=0}^{[(nm-|\gamma|)/2]}\sum_{s_1+\ldots+s_{n-1}=j}\,\, \sum_{|\boldsymbol{\alpha}^{1}+\boldsymbol{\beta}^{1}|=s_1,\ldots, |\boldsymbol{\alpha}^{n-1}+\boldsymbol{\beta}^{n-1}|=s_{n-1}}\,\,  \sum_{\gamma^1+\ldots+\gamma^n=\gamma}\\
&{}&\cdot \frac{1} {\tilde{\boldsymbol{\alpha}}!\tilde{\boldsymbol{\beta}}!2^j}\cdot \frac{\gamma!}{\gamma^1!\ldots\gamma^n!} C_1^n\langle w\rangle^{nm-|\gamma|-2j},
\eeqs
where we employed the principle of vacuous summation if $[(nm-|\gamma|)/2]<0$ (i.e. if $|\gamma|>nm$). Notice that
\beqs
\sum_{|\boldsymbol{\alpha}^l+\boldsymbol{\beta}^l|=s_l} \frac{1}{\boldsymbol{\alpha}^l!\boldsymbol{\beta}^l!}=\frac{(2dl)^{s_l}}{s_l!},\,\, l=1,\ldots,n-1.
\eeqs
Hence,
\beq\label{sst57}
\sum_{s_1+\ldots+s_{n-1}=j}\,\, \sum_{|\boldsymbol{\alpha}^{1}+\boldsymbol{\beta}^{1}|=s_1,\ldots, |\boldsymbol{\alpha}^{n-1}+\boldsymbol{\beta}^{n-1}|=s_{n-1}}\frac{1} {\tilde{\boldsymbol{\alpha}}!\tilde{\boldsymbol{\beta}}!2^j}= \frac{d^j}{j!}\cdot\frac{n^j(n-1)^j}{2^j}.
\eeq
As $\sum_{\gamma^1+\ldots+\gamma^n=\gamma}\gamma!/(\gamma^1!\ldots\gamma^n!)=n^{|\gamma|}$, we conclude
\beqs
|D^{\gamma}_wa^{(\# n)}_n(w)|\leq \frac{C_1^n}{\langle w\rangle^{|\gamma|}}\sum_{j=0}^{[(nm-|\gamma|)/2]}\frac{d^jn^{|\gamma|+2j}\langle w\rangle^{nm-2j}}{2^jj!}.
\eeqs
The condition $(M.2)$ on $M_p$ gives $M_{nm}\leq c^{m-1}_0H^{nm(m+1)/2}M^m_n$, $\forall n\in\NN$, and thus
\beqs
|\zeta_n|n^{|\gamma|+2j}\leq \frac{C'c^{m-1}_0H^{nm(m+1)/2}(L'e)^n(|\gamma|+2j)!}{M_{nm}}\leq \frac{C''H^{nm(m+1)/2}(L'e)^n2^{|\gamma|+2j}|\gamma|!}{M_{nm-2j}},
\eeqs
where, the very last inequality follows from the boundedness of $p!/M_p$. We infer
\beqs
|\zeta_n||D^{\gamma}_wa^{(\# n)}_n(w)|&\leq& \frac{C''2^{|\gamma|}|\gamma|!}{2^{nm}\langle w\rangle^{|\gamma|}}\sum_{j=0}^{[(nm-|\gamma|)/2]}\frac{(4eC_1dH^{\frac{m+1}{2}}L'^{1/m})^{2j} (2eC_1H^{\frac{m+1}{2}}L'^{1/m}\langle w\rangle)^{nm-2j}}{2^jj!M_{nm-2j}}\\
&\leq&\frac{C'''2^{|\gamma|}|\gamma|!e^{M(2eC_1H^{(m+1)/2}L'^{1/m}\langle w\rangle)}}{2^{nm}\langle w\rangle^{|\gamma|}}.
\eeqs
Notice that this estimate trivially holds when $n=0,1$. We deduce that $\sum_{n=0}^{\infty}\zeta_na^{(\# n)}_n(w)$ absolutely converges in $\Gamma^{(M_p),\infty}_{A_p,1}(\RR^{2d};\tilde{m})$ for some $\tilde{m}>0$ in the Beurling case and in $\Gamma^{\{M_p\},\infty}_{A_p,1}(\RR^{2d};4)$ in the Roumieu case respectively. Consequently, $\sum_{n=0}^{\infty}\zeta_na^{(\# n)}_n(w)$ absolutely converges in $\Gamma^{*,\infty}_{A_p,1}(\RR^{2d})$. The proof for $\sum_{n=0}^{\infty}\zeta_n(a_n(w))^n$ is completely analogous and we omit it. Notice that $(iii)$ follows from the part of $(ii)$ concerning $\sum_{n=0}^{\infty}\zeta_na^{(\# n)}_n(w)$ and \cite[Proposition 3.1]{PP1} and Lemma \ref{ktn557713}.
\end{proof}

\subsection{Power series of elliptic Shubin type polynomial symbols}
\label{power series elliptic subsection}

Assume now that $a(w)=\sum_{|\gamma|\leq m}c_{\gamma}w^{\gamma}$ is a real-valued elliptic Shubin polynomial of degree $m\geq 2$, $m\in\ZZ_+$, such that $a(w)>0$ when $w\in Q^c_{B_1}$, for some $B_1\geq 1$. Clearly, $c_{\gamma}\in\RR$, for all $|\gamma|\leq m$. There exists $C_1\geq 1$ such that
\beq
&{}&|D^{\alpha}a(w)|\leq C_1\langle w\rangle^{m-|\alpha|},\,\, \forall w\in\RR^{2d},\, \forall\alpha\in\NN^{2d},\label{obk771553}\\
&{}&\langle w\rangle^m\leq C_1a(w)\,\, \mbox{and}\,\, a(w)\geq 1,\,\, \forall w\in Q^c_{B_1},\label{ktsdr15}
\eeq
by increasing $B_1$ if it is necessary. Hence, for each $0<\rho'<1$, there exists $B'=B'(\rho')\geq B_1$ such that
\beq\label{est111elp}
|D^{\alpha}a(w)|\leq (a(w))^{1-\frac{|\alpha|\rho'}{m}},\,\, \forall\alpha\in\NN^{2d},\, \forall w\in Q^c_{B'}.
\eeq
Denote by $a'$ the principle part of $a$, i.e., $a'(w)=\sum_{|\gamma|=m}c_{\gamma}w^{\gamma}$. Then $a'$ is a real-valued elliptic Shubin polynomial. Increasing $B_1$ and $C_1$ if it is necessary, we may assume that $a'$ satisfies (\ref{obk771553}) and (\ref{ktsdr15}). We retain the notation $a^{(\# n)}$, $n\in\mathbb{Z}_{+}$, for the polynomials (\ref{kktlt17}). As before, we additionally define $a^{(\# 0)}$ by $a^{(\# 0)}(w)=1$, $\forall w\in\RR^{2d}$.

\begin{theorem}\label{serpolsym}
Let $a$, $a'$ and $a^{(\# n)}$, $n\in\NN$, be as above and assume that the parameter $\rho$ in $\Gamma^{*,\infty}_{A_p,\rho}(\RR^{2d})$ is such that $\rho<1$. Let $s>1/(1-\rho)$ be such that $M_p\subset p!^s$ in the $(M_p)$ case and $M_p\prec p!^s$ in the $\{M_p\}$ case respectively. Let $\widehat{M}_n$, $n\in\NN$, be a sequence of positive numbers such that $\widehat{M}_0=1$ and there exists $C_0\geq 1$ such that
\beq\label{stk997733}
C_0^{n-k}\frac{\widehat{M}_n}{(nm)!^s}\geq \frac{\widehat{M}_k}{(km)!^s},\,\, \forall n,k\in\NN,\ \  \mbox{with}\  n\geq k.
\eeq
Let $P:\RR\rightarrow \RR$ be defined as
\beq
\label{especial ultrapolydef}
P(\lambda)=\sum_{n=0}^{\infty}\frac{\lambda^n}{\widehat{M}_n}.
\eeq
Then the following statements hold true.
\begin{itemize}
\item[$(i)$] The functions $w\mapsto P(|a(w)|)$ and $w\mapsto P(|a'(w)|)$, $\RR^{2d}\rightarrow [0,\infty)$, are continuous and  of ultrapolynomial growth of class $*$.
\item[$(ii)$] The series $\sum_{n=0}^{\infty}a^{(\# n)}(w)/\widehat{M}_n$, $\sum_{n=0}^{\infty}(a(w))^n/\widehat{M}_n$ and $\sum_{n=0}^{\infty}(a'(w))^n/\widehat{M}_n$ absolutely converge in $\Gamma^{*,\infty}_{A_p,1}(\RR^{2d})$.
\item[$(iii)$] There exist $C>1$ and $B\geq B_1$ such that for all $w\in Q^c_B$ and $\gamma\in\NN^{2d}$,
    $$
    \left|\sum_{n=0}^{\infty}\frac{D^{\gamma}_wa^{(\# n)}(w)}{\widehat{M}_n}\right|\leq \frac{C^{|\gamma|+1}P(a(w))}{\langle w\rangle^{\rho|\gamma|}},\ \quad |D^{\gamma}_w P(a(w))|\leq \frac{C^{|\gamma|+1}P(a(w))}{\langle w\rangle^{\rho|\gamma|}},
    $$
    \beq\label{ttvv19911}
    \left|D^{\gamma}_w\left(P(a(w))- \sum_{n=0}^{\infty}\frac{a^{(\# n)}(w)}{\widehat{M}_n}\right)\right|\leq \frac{C^{|\gamma|+1}P(a(w))}{\langle w\rangle^{\rho(|\gamma|+2)}}.
    \eeq
    In particular, the symbols $\sum_{n=0}^{\infty}a^{(\# n)}(w)/\widehat{M}_n$ and $P(a(w))=\sum_{n=0}^{\infty}(a(w))^n/\widehat{M}_n$ are $\Gamma^{*,\infty}_{A_p,\rho}(\RR^{2d})$-hypoelliptic.
\item[$(iv)$] There exist $C>1$ and $B\geq B_1$ such that for all $w\in Q^c_B$ and $\gamma\in\NN^{2d}$,
    \beqs
    |D^{\gamma}_w P(a'(w))|\leq \frac{C^{|\gamma|+1}P(a'(w))}{\langle w\rangle^{\rho|\gamma|}},\,\,\, \left|D^{\gamma}_w\left(P(a'(w))- \sum_{n=0}^{\infty}\frac{a^{(\# n)}(w)}{\widehat{M}_n}\right)\right|\leq \frac{C^{|\gamma|+1}P(a'(w))}{\langle w\rangle^{\rho(|\gamma|+1)}}.
    \eeqs
    In particular, the symbol $P(a'(w))=\sum_{n=0}^{\infty}(a'(w))^n/\widehat{M}_n$ is $\Gamma^{*,\infty}_{A_p,\rho}(\RR^{2d})$-hypoelliptic.
\end{itemize}
\end{theorem}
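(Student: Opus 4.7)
The plan is to reduce parts (i) and (ii) to Proposition \ref{lemfor135} applied with the constant sequence $a_n=a$ (resp.\ $a_n=a'$) and coefficients $\zeta_n=1/\widehat{M}_n$, and then to derive the derivative estimates of (iii) and (iv) by combining the explicit term-by-term bounds from Lemma \ref{shproduct} and the proof of Proposition \ref{lemfor135}(ii) with the ellipticity (\ref{obk771553})--(\ref{ktsdr15}) and the Fa\`a di Bruno formula. To justify the application of Proposition \ref{lemfor135}, I would take $k=0$ in (\ref{stk997733}) to obtain $\widehat{M}_n\geq (nm)!^s/C_0^n$. Stirling yields $(nm)!^s\geq c(m^{sm})^n(n!)^{sm}$, and combining this with $M_p\subset p!^s$ (resp.\ $M_p\prec p!^s$), which gives $M_n^m\leq CL^{nm}(n!)^{sm}$, one deduces $M_n^m\leq C'L'^n\widehat{M}_n$ for some fixed $L'>0$ in the Beurling case and for every $L'>0$ (with a corresponding $C'$) in the Roumieu case. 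This is precisely the coefficient hypothesis of Proposition \ref{lemfor135}, so parts (i) and (ii) follow.

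For part (iii), I would establish each of the three bounds separately. For $|D^\gamma P(a(w))|$ I would invoke Fa\`a di Bruno to write $D^\gamma[P(a(w))]$ as a sum over partitions of $\gamma$ of terms $P^{(k)}(a(w))\prod_i D^{\gamma_i}a(w)$, bound each $|D^{\gamma_i}a(w)|\leq C_1\langle w\rangle^{m-|\gamma_i|}$ by (\ref{obk771553}), and then convert the resulting powers of $\langle w\rangle^m$ into powers of $a(w)$ via $\langle w\rangle^m\leq C_1a(w)$ on $Q_B^c$; absorbing the combinatorial weights into a rescaled version of $P$ (legitimized by (\ref{stk997733})) gives $C^{|\gamma|+1}P(a(w))/\langle w\rangle^{|\gamma|}$, which dominates the required $\langle w\rangle^{-\rho|\gamma|}$ bound. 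For $|D^\gamma\sum_n a^{(\#n)}/\widehat{M}_n|$ I would sum over $n$ the derivative estimate derived in the proof of Proposition \ref{lemfor135}(ii), using the ellipticity $\langle w\rangle^{nm-2j}\leq (C_1a(w))^n$ on $Q_B^c$, condition $(M.2)$, and (\ref{stk997733}) to reassemble $P(a(w))$ on the right. The crucial improvement in (\ref{ttvv19911}) rests on the observation from Lemma \ref{shproduct} that $a^{(\#n)}(w)-(a(w))^n=\sum_{j=1}^{[nm/2]}(a^{\#n})_j(w)$ is a polynomial in $w$ of degree at most $nm-2$; the drop of two full degrees relative to $(a(w))^n$ produces, after the same summation argument and a second application of $\langle w\rangle^m\leq C_1a(w)$, the extra $\langle w\rangle^{-2\rho}$ factor.

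Part (iv) follows the identical scheme with $a'$ replacing $a$ in the first two bounds, using the ellipticity of the principal part. For the analogue of (\ref{ttvv19911}), I would write $a=a'+r$ with $\deg r\leq m-1$; then $(a(w))^n-(a'(w))^n=r(w)\sum_{k=0}^{n-1}(a(w))^k(a'(w))^{n-1-k}$ is a polynomial of degree at most $nm-1$, and combined with the degree-$nm-2$ estimate for $a^{(\#n)}(w)-(a(w))^n$, the total polynomial $a^{(\#n)}(w)-(a'(w))^n$ has degree at most $nm-1$, which accounts for the single $\langle w\rangle^{-\rho}$ factor. The principal obstacle throughout is the combinatorial bookkeeping: the nested sums in Lemma \ref{shproduct} (the factors $d^jn^{|\gamma|+2j}/(j!2^j)$ of (\ref{sst57}), together with the Fa\`a di Bruno partition weights), once divided by $\widehat{M}_n$, must reassemble modulo constants into $P(a(w))$ (resp.\ $P(a'(w))$). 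This hinges on (\ref{stk997733}), $(M.2)$, and the quantitative hypothesis $s>1/(1-\rho)$, which together ensure that the growth of $\widehat{M}_n$ dominates the $M_n^m$-type combinatorial products produced by the $\#$-expansion.
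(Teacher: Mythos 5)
Parts (i) and (ii) of your proposal are fine and coincide with the paper's route: the paper also deduces (ii) as a special case of Proposition \ref{lemfor135} with $\zeta_n=1/\widehat{M}_n$, the coefficient hypothesis being checked exactly as you do from $\widehat{M}_n\geq (nm)!^s/C_0^n$ and $M_p\subset p!^s$ (resp. $M_p\prec p!^s$).

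For (iii) and (iv), however, there is a genuine gap. The algebraic skeleton you describe (the expansion of Lemma \ref{shproduct}, the degree drop of $a^{(\#n)}-a^n$ by $2$, the decomposition $a^n-a'^n$ through the lower-order part) is the same as in the paper, but the analytic core is missing: after using (\ref{obk771553})--(\ref{ktsdr15}) one is left with sums of the form $\sum_{n}n^{|\gamma|+2j}(a(w))^{\,n-(|\gamma|+2j)\rho'/m}/\widehat{M}_n$, and the whole difficulty is to bound these by $C^{|\gamma|+2j}P(a(w))\langle w\rangle^{-\rho(|\gamma|+2j)}$ with $P$ evaluated at \emph{exactly} $a(w)$ and only geometrically growing constants. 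Your proposed mechanism --- ``absorbing the combinatorial weights into a rescaled version of $P$, legitimized by (\ref{stk997733})'' --- does not achieve this: for an entire function of infinite order, $P(C\lambda)/P(\lambda)\to\infty$ as $\lambda\to\infty$ whenever $C>1$ (e.g. $P(\lambda)\asymp e^{c\lambda^{1/(sm)}}$), so replacing $P(a(w))$ by $P(Ca(w))$ or by $e^{M(c\langle w\rangle)}$ (which is what summing the crude estimates from the proof of Proposition \ref{lemfor135}(ii) yields) is not admissible in a hypoellipticity bound of type (\ref{dd2}), where the right-hand side must be the symbol itself. Moreover, your intermediate claim $|D^{\gamma}P(a(w))|\leq C^{|\gamma|+1}P(a(w))/\langle w\rangle^{|\gamma|}$ is false: already for $a(w)=|w|^2$, $\widehat{M}_n=n!^{2s}$ and $|\gamma|=1$ one has $|\nabla P(a(w))|\asymp |w|^{1/s-1}P(a(w))$, so the decay per derivative cannot exceed $1-1/s$; this is precisely why the hypothesis $s>1/(1-\rho)$ enters and why the theorem only asserts the $\langle w\rangle^{-\rho|\gamma|}$ rate. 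The paper resolves this by fixing $s'$ with $s>s'>1/(1-\rho)$ and $\rho'\in(\rho+1/s',1)$, using the fractional estimate (\ref{est111elp}), splitting the series by H\"older's inequality with exponents $s'$ and $s'/(s'-1)$ into the factors $g_1,g_2$ of (\ref{trk99}), extracting the decay $\langle w\rangle^{-\rho(|\gamma|+2j)}$ (and the extra $\langle w\rangle^{-2\rho}$ when $j\geq1$) from $g_2$, and then proving $g_1\leq C'^{\,|\gamma|+2j}P(a(w))^{1/s'}$ through a three-case comparison of $|\gamma|+2j$ with $a(w)^{1/m}$ that uses (\ref{stk997733}) and factorial ratios. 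This case analysis is the heart of the proof of (iii) (and, repeated, of (iv)); your proposal contains no substitute for it, and the shortcut it offers in its place does not work.
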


\begin{proof}
First, observe that (\ref{stk997733}) implies $\widehat{M}_n\geq (nm)!^s/C_0^n$, $\forall n\in\NN$. Part $(i)$ follows directly from this observation together with the fact $M_p\subset p!^s$ (resp. $M_p\prec p!^s$). Part $(ii)$ is just a special case of Proposition \ref{lemfor135} $(ii)$.\\
\indent Next, we prove $(iii)$. Our immediate goal is to estimate
$$D^{\gamma}_w\left(\sum_{n=0}^{\infty}\frac{a^{(\# n)}(w)}{\widehat{M}_n}-P(a(w))\right)= \sum_{n=2}^{\infty}\frac{D^{\gamma}_w(a^{(\# n)}(w)-(a(w))^n)}{\widehat{M}_n}.$$
Fix $s'$ such that $s>s'>1/(1-\rho)$ (by assumption, $s>1/(1-\rho)$). Hence $1>\rho+s'^{-1}$. Pick $\rho'$ such that $1>\rho'>\rho+ s'^{-1}$. Thus $(s'\rho'-1)/s'>\rho$. For this $\rho'$ there exists $B'\geq B_1$ for which (\ref{est111elp}) holds. Because of Lemma \ref{shproduct} and (\ref{kktlt17}), we have
\begin{align}
D^{\gamma}_w& (a^{(\# n)}(w)-(a(w))^n)\nonumber \\
&= \sum_{j=1}^{[nm/2]}\sum_{s_1+\ldots+s_{n-1}=j}\,\, \sum_{|\boldsymbol{\alpha}^{1}+\boldsymbol{\beta}^{1}|=s_1,\ldots, |\boldsymbol{\alpha}^{n-1}+\boldsymbol{\beta}^{n-1}|=s_{n-1}}\,\,  \sum_{\gamma^1+\ldots+\gamma^n=\gamma}
 \frac{(-1)^{|\tilde{\boldsymbol{\beta}}|}} {\tilde{\boldsymbol{\alpha}}!\tilde{\boldsymbol{\beta}}!(2i)^j} \frac{\gamma!}{\gamma^1!\ldots\gamma^n!}\nonumber\\
&{}\quad \cdot\prod_{l=1}^n D^{\gamma^l}_w \partial^{\beta^{l-1,1}+\ldots+\beta^{l-1,l-1}+\alpha^{l,l}+\ldots+\alpha^{n-1,l}}_{\xi} \partial^{\alpha^{l-1,1}+\ldots+\alpha^{l-1,l-1}+\beta^{l,l}+\ldots+\beta^{n-1,l}}_x a(w).\label{terforthediff}
\end{align}
Similarly as in the prove of Proposition \ref{lemfor135} $(ii)$, one verifies that in the first sum $j\leq [(nm-|\gamma|)/2]$, because, in the contrary, all summands are identically equal to zero. Thus, applying (\ref{est111elp}), we have the following estimate on $Q^c_{B'}$
\begin{align*}
|D^{\gamma}_w& (a^{(\# n)}(w)-(a(w))^n)|\\
&\leq \sum_{j=1}^{[(nm-|\gamma|)/2]}\sum_{s_1+\ldots+s_{n-1}=j}\,\, \sum_{|\boldsymbol{\alpha}^{1}+\boldsymbol{\beta}^{1}|=s_1,\ldots, |\boldsymbol{\alpha}^{n-1}+\boldsymbol{\beta}^{n-1}|=s_{n-1}}\,\,  \sum_{\gamma^1+\ldots+\gamma^n=\gamma}\\
&{}\quad \cdot \frac{1} {\tilde{\boldsymbol{\alpha}}!\tilde{\boldsymbol{\beta}}!2^j}\cdot \frac{\gamma!}{\gamma^1!\ldots\gamma^n!}\cdot (a(w))^{n-\frac{(|\gamma|+2j)\rho'}{m}},
\end{align*}
where we applied the principle of vacuous summation if $[(nm-|\gamma|)/2]\leq0$. Now, (\ref{sst57}) implies
\beq
\sum_{n=2}^{\infty}\frac{|D^{\gamma}_w(a^{(\# n)}(w)-(a(w))^n)|}{\widehat{M}_n}&\leq& \sum_{n=2}^{\infty}\sum_{j=1}^{[(nm-|\gamma|)/2]} \frac{d^jn^{|\gamma|+2j}}{2^jj!\widehat{M}_n}\cdot (a(w))^{n-\frac{(|\gamma|+2j)\rho'}{m}}\nonumber \\
&\leq& \sum_{j=1}^{\infty}\frac{d^j}{2^jj!}\sum_{n=[(|\gamma|+2j)/m]}^{\infty} \frac{n^{|\gamma|+2j}(a(w))^{n-\frac{(|\gamma|+2j)\rho'}{m}}}{\widehat{M}_n}, \label{jtv97}
\eeq
for all $w\in Q^c_{B'}$, $\gamma\in\NN^{2d}$. Observe that
\begin{align}
\sum_{n=[(|\gamma|+2j)/m]}^{\infty} & \frac{n^{|\gamma|+2j}(a(w))^{n-\frac{(|\gamma|+2j)\rho'}{m}}}{\widehat{M}_n}\nonumber \\
&= \sum_{n=[(|\gamma|+2j)/m]}^{\infty} \frac{n^{|\gamma|+2j}(a(w))^{\left(n-\frac{(|\gamma|+2j)}{m}\right)\frac{1}{s'}}} {\widehat{M}^{1/s'}_n} \cdot \frac{(a(w))^{\frac{(s'-1)n}{s'}-\frac{|\gamma|+2j}{m}\cdot \frac{s'\rho'-1}{s'}}}{\widehat{M}^{(s'-1)/s'}_n}\nonumber \\
&\leq \left(\sum_{n=[(|\gamma|+2j)/m]}^{\infty} \frac{n^{(|\gamma|+2j)s'}(a(w))^{n-\frac{|\gamma|+2j}{m}}} {\widehat{M}_n}\right)^{1/s'}\label{trk99}\\
&{}\quad\cdot
\left(\sum_{n=[(|\gamma|+2j)/m]}^{\infty} \frac{(a(w))^{n-\frac{|\gamma|+2j}{m}\cdot \frac{s'\rho'-1}{s'-1}}}{\widehat{M}_n}\right)^{(s'-1)/s'},\nonumber
\end{align}
by H\"older's inequality with $p=s'>1$ and $q=s'/(s'-1)>1$. Denote by $g_1(w)$ and $g_2(w)$ the two functions comprising the above product. Notice that
\beqs
g_2(w)\leq (a(w))^{-\frac{|\gamma|+2j}{m}\cdot\frac{s'\rho'-1}{s'}}(P(a(w)))^{(s'-1)/s'}\leq C^{|\gamma|+1}_1\langle w\rangle^{-(|\gamma|+2)\rho}(P(a(w)))^{(s'-1)/s'}
\eeqs
on $Q^c_{B'}$; the very last inequality follows from the facts: $j\geq1$, (\ref{ktsdr15}) and the way we chose $s'$ and $\rho'$. In order to estimate $g_1$, fix $w\in Q^c_{B'}$, $j\geq 1$ and $\gamma\in\NN^{2d}$. There exists $k\in\ZZ_+$ such that $k^{s'}\leq (a(w))^{1/m}<(k+1)^{s'}$ (cf. (\ref{ktsdr15})) and there exists $l\in\NN$ such that $lm\leq |\gamma|+2j\leq (l+1)m-1$. We consider three cases.\\
\underline{Case 1}: $k+1\leq l$. We have
\beqs
(g_1(w))^{s'}=\frac{l^{(|\gamma|+2j)s'}(a(w))^{l-\frac{|\gamma|+2j}{m}}}{\widehat{M}_l}+ S_1+S_2,
\eeqs
where
\beqs
S_1=\sum_{n=l+1}^{m(l+1)} \frac{n^{(|\gamma|+2j)s'}(a(w))^{n-\frac{|\gamma|+2j}{m}}}{\widehat{M}_n},\,\, S_2=\sum_{n=m(l+1)+1}^{\infty} \frac{n^{(|\gamma|+2j)s'}(a(w))^{n-\frac{|\gamma|+2j}{m}}} {\widehat{M}_n}.
\eeqs
Notice that
\beqs
S_1= \sum_{n=l+1}^{m(l+1)} \frac{n^{(|\gamma|+2j)s'}(a(w))^{\frac{(l+1)m-|\gamma|-2j}{m}} (a(w))^{n-l-1}}{\widehat{M}_n}\leq\sum_{n=l+1}^{m(l+1)} \frac{n^{(l+1)ms'} (a(w))^{n-l-1}}{\widehat{M}_n}
\eeqs
and similarly
\beqs
S_2\leq \sum_{n=m(l+1)+1}^{\infty} \frac{n^{(l+1)ms'} (a(w))^{n-l-1}}{\widehat{M}_n}.
\eeqs
To estimate $S_1$, notice that $n^{(l+1)ms'}\leq \prod_{t=0}^{(l+1)m-1}(nm+n-t)^s$ (because $nm+n-t\geq nm+n-(l+1)m\geq n$). Hence,
\beqs
n^{(l+1)ms'}\leq (nm+n)!^s/((n-l-1)m+n)!^s\leq 2^{n(m+1)s}(nm)!^s/((n-l-1)m)!^s.
\eeqs
When $n$ ranges between $l+1$ and $m(l+1)$, we infer
\beqs
2^{n(m+1)s}\leq 2^{m(m+1)s}2^{m(m+1)ls}\leq 2^{m(m+1)s}2^{s(m+1)(|\gamma|+2j)}.
\eeqs
Applying (\ref{stk997733}), we conclude
\beqs
S_1\leq C_0C_0^l2^{m(m+1)s}2^{s(m+1)(|\gamma|+2j)}P(a(w))\leq C_02^{m(m+1)s}(C_02^{s(m+1)})^{(|\gamma|+2j)}P(a(w)).
\eeqs
To estimate $S_2$, notice that when $n\geq m(l+1)+1$ we have
\beqs
n^{(l+1)ms'}\leq \prod_{t=0}^{(l+1)m-1}(nm-t)^s=\frac{(nm)!^s}{((n-l-1)m)!^s}
\eeqs
(since $nm-t\geq nm-(l+1)m\geq n$). Thus, applying (\ref{stk997733}), we conclude $S_2\leq C_0C_0^{|\gamma|+2j}P(a(w))$. Finally, because of (\ref{ktsdr15}), we have $(a(w))^{l-\frac{|\gamma|+2j}{m}}\leq 1$. Hence,
\beqs
\frac{l^{(|\gamma|+2j)s'}(a(w))^{l-\frac{|\gamma|+2j}{m}}}{\widehat{M}_l}&\leq& \frac{C_0^ll^{(l+1)ms'}}{(lm)!^s}\leq \frac{C_0^le^{lms'}l!^{ms'}l^{ms'}}{l!^{ms}}\leq \frac{(C_0e)^{2lms'}}{l!^{m(s-s')}}\\
&=&\left(\frac{(C_0e)^{2ls'/(s-s')}}{l!}\right)^{m(s-s')}.
\eeqs
Obviously, the last term can be estimated by a constant $C'_1$ which is independent of $l$. As $P(a(w))\geq 1$ on $Q^c_{B'}$, we conclude
\beq\label{kth975137}
g_1(w)\leq C'_2(C_0^{1/s'}2^{(m+1)s/s'})^{|\gamma|+2j}P(a(w))^{1/s'},
\eeq
where the constant $C'_2>0$ is independent of $w\in Q^c_{B'}$, $j\geq 1$ and $\gamma\in\NN^{2d}$.\\
\underline{Case 2}: $l\leq k\leq m(l+1)-1$. We infer
\beqs
(g_1(w))^{s'}&=&\sum_{n=l}^k \frac{n^{(|\gamma|+2j)s'}(a(w))^{n-\frac{|\gamma|+2j}{m}}}{\widehat{M}_n} +\sum_{n=k+1}^{m(l+1)} \frac{n^{(|\gamma|+2j)s'}(a(w))^{n-\frac{|\gamma|+2j}{m}}} {\widehat{M}_n}\\
&{}&+\sum_{n=m(l+1)+1}^{\infty} \frac{n^{(|\gamma|+2j)s'}(a(w))^{n-\frac{|\gamma|+2j}{m}}} {\widehat{M}_n}.
\eeqs
The second and the third sum can be estimated by
$$
C_02^{m(m+1)s}(C_02^{s(m+1)})^{(|\gamma|+2j)}P(a(w))\quad  \mbox{and} \quad C_0C_0^{|\gamma|+2j}P(a(w)),
$$
respectively, by the same technique as for the estimates of the sums $S_1$ and $S_2$ from the first case. To estimate the first sum, notice that
\beqs
\sum_{n=l}^k \frac{n^{(|\gamma|+2j)s'}(a(w))^{n-\frac{|\gamma|+2j}{m}}}{\widehat{M}_n}\leq\sum_{n=0}^k \frac{(a(w))^n}{\widehat{M}_n} \leq P(a(w)).
\eeqs
Thus, we deduce (\ref{kth975137}) possibly with another $C'_2$ independent of $w\in Q^c_{B'}$, $j\geq 1$ and $\gamma\in\NN^{2d}$.\\
\underline{Case 3}: $m(l+1)\leq k$. We infer
\beqs
(g_1(w))^{s'}=\sum_{n=l}^k \frac{n^{(|\gamma|+2j)s'}(a(w))^{n-\frac{|\gamma|+2j}{m}}}{\widehat{M}_n}+ \sum_{n=k+1}^{\infty} \frac{n^{(|\gamma|+2j)s'}(a(w))^{n-\frac{|\gamma|+2j}{m}}}{\widehat{M}_n}.
\eeqs
The first sum can be estimated by $P(a(w))$ in the same way as for the first sum in the second case. The second sum can be estimated by $C_0C_0^{|\gamma|+2j}P(a(w))$ in the same way as for the sum $S_2$ from the first case. Hence, we obtain (\ref{kth975137}).\\
\indent Combining (\ref{kth975137}) with the estimate for $g_2$ and (\ref{jtv97}) and (\ref{trk99}), we conclude that there exists $C\geq 1$ such that
\beq\label{ttvv199}
\sum_{n=2}^{\infty}\frac{|D^{\gamma}_w(a^{(\# n)}(w)-(a(w))^n)|}{\widehat{M}_n}\leq \frac{C^{|\gamma|+1}P(a(w))}{\langle w\rangle^{\rho(|\gamma|+2)}},\,\, w\in Q^c_{B'},\, \gamma\in\NN^{2d},
\eeq
which proves (\ref{ttvv19911}). On the other hand, for $\gamma\in\NN^{2d}\backslash\{0\}$, we have
\beqs
|D^{\gamma}_w P(a(w))|\leq \sum_{n=1}^{\infty}\frac{|D^{\gamma}(a(w))^n|}{\widehat{M}_n}\leq \sum_{n=1}^{\infty}\frac{1}{\widehat{M}_n}\sum_{\gamma^1+\ldots+\gamma^n=\gamma} \frac{\gamma!}{\gamma^1!\ldots\gamma^n!}\prod_{l=1}^n \left|D^{\gamma^l}_wa(w)\right|.
\eeqs
Analogously as in the estimate for $|D^{\gamma}_wa^{(\# n)}(w)|$, we can deduce that the terms where $|\gamma|> nm$ are identically equal to zero. Thus, for $w\in Q^c_{B'}$, employing (\ref{est111elp}), we infer
\beqs
|D^{\gamma}_w P(a(w))|\leq\sum_{n=[|\gamma|/m]}^{\infty} \frac{n^{|\gamma|}(a(w))^{n-\frac{|\gamma|\rho'}{m}}} {\widehat{M}_n}.
\eeqs
Now, employing the same technique as for the estimation of (\ref{trk99}), we can conclude the existence of $C\geq1$ such that $|D^{\gamma}_wP(a(w))|\leq C^{|\gamma|+1} P(a(w))\langle w\rangle^{-\rho|\gamma|}$, for all $w\in Q^c_{B'}$, $\gamma\in\NN^{2d}$ (the estimate trivially holds when $\gamma=0$). This estimate together with (\ref{ttvv19911}) proves the rest of the claims in $(iii)$.\\
\indent It remains to prove $(iv)$. We recall that $a'$ satisfies (\ref{obk771553}) and (\ref{ktsdr15}). Let $s'$ and $\rho'$ be as above. Since $a'$ is elliptic and $a''=a-a'$ is a polynomial of degree at most $m-1$, there is $B'=B'(\rho')\geq B_1$ such that
\beq\label{kkttr1755}
|D^{\gamma}_wa'(w)|\leq (a'(w))^{1-\frac{|\gamma|\rho'}{m}},\,\,\, |D^{\gamma}_w a''(w)|\leq (a'(w))^{1-\frac{(|\gamma|+1)\rho'}{m}},
\eeq
when $w\in Q^c_{B'},\, \gamma\in\NN^{2d}$. For $n\in\ZZ_+$ and $\gamma\in\NN^{2d}$, we infer
$$
\left|D^{\gamma}\left((a(w))^n-(a'(w))^n\right)\right|
\leq\sum_{j=1}^n{n\choose j}\sum_{\gamma'+\gamma''=\gamma}\frac{\gamma!}{\gamma'!\gamma''!} \left|D^{\gamma'}\left((a'(w))^{n-j}\right)\right| \left|D^{\gamma''}\left((a''(w))^j\right)\right|.
$$
As $(a'(w))^{n-j}$ is a polynomial of degree $nm-jm$ and $(a''(w))^j$ is a polynomial of degree at most $jm-j$, employing similar technique as above, we can conclude that the terms where $j> nm-|\gamma|$ are identically equal to zero. Thus, as ${n\choose j}\leq n^j/j!$, (\ref{kkttr1755}) yields the following estimate on $Q^c_{B'}$
\begin{align*}
\sum_{n=1}^{\infty} & \frac{\left|D^{\gamma}\left((a(w))^n-(a'(w))^n\right)\right|} {\widehat{M}_n}\\
&\leq \sum_{n=1}^{\infty}\sum_{j=1}^{\min\{nm-|\gamma|,n\}}\frac{n^j}{j!\widehat{M}_n} \sum_{\gamma'+\gamma''=\gamma} \frac{\gamma!}{\gamma'!\gamma''!}(n-j)^{|\gamma'|}j^{|\gamma''|} (a'(w))^{n-\frac{(|\gamma|+j)\rho'}{m}}\\
&\leq\sum_{n=1}^{\infty}\sum_{j=1}^{nm-|\gamma|} \frac{n^{|\gamma|+j}(a'(w))^{n-\frac{(|\gamma|+j)\rho'}{m}}}{j!\widehat{M}_n}\leq \sum_{j=1}^{\infty}\frac{1}{j!}\sum_{n=[(|\gamma|+j)/m]}^{\infty} \frac{n^{|\gamma|+j}(a'(w))^{n-\frac{(|\gamma|+j)\rho'}{m}}}{\widehat{M}_n}.
\end{align*}
Repeating the same argument as for the estimation of (\ref{trk99}), we conclude that there exists $C>1$ such that
\beqs
\left|D^{\gamma}\left(P(a(w))-P(a'(w))\right)\right|\leq \sum_{n=1}^{\infty}\frac{\left|D^{\gamma}\left((a(w))^n-(a'(w))^n\right)\right|} {\widehat{M}_n}\leq \frac{C^{|\gamma|+1}P(a'(w))}{\langle w\rangle^{\rho(|\gamma|+1)}},
\eeqs
for all $w\in Q^c_{B'}$, $\gamma\in\NN^{2d}$. All claims in $(iv)$ are immediate consequences of this estimate together with $(iii)$.
\end{proof}

The significance of Theorem \ref{serpolsym} $(ii)$--$(iii)$ lies in the following result.

\begin{corollary}\label{harmosers}
Under the hypotheses of Theorem \ref{serpolsym}, the series $\sum_{n=0}^{\infty} (a^w)^n/\widehat{M}_n$ converges absolutely in $\mathcal{L}_b(\SSS^*(\RR^d),\SSS^*(\RR^d))$ and in $\mathcal{L}_b(\SSS'^*(\RR^d),\SSS'^*(\RR^d))$. Moreover, $P(a^w)=\sum_{n=0}^{\infty} (a^w)^n/\widehat{M}_n$ is a $\Gamma^{*,\infty}_{A_p,\rho}$-hypoelliptic pseudo-differential operator with real-valued Weyl symbol $\sum_{n=0}^{\infty} a^{(\# n)}/\widehat{M}_n\in \Gamma^{*,\infty}_{A_p,\rho}(\RR^{2d})$, that is,
$$
P(a^w)=\left(\sum_{n=0}^{\infty}\frac{ a^{(\# n)}}{\widehat{M}_n}\right)^w.
$$
Furthermore, the symbol $\sum_{n=0}^{\infty} a^{(\#n)}/\widehat{M}_n$ satisfies the conclusions of Theorem \ref{serpolsym}.
\end{corollary}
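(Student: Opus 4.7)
The plan is to realize Corollary \ref{harmosers} as the specialization of Proposition \ref{lemfor135} to the constant choice $a_n = a$ and coefficients $\zeta_n = 1/\widehat{M}_n$, with the remaining assertions about hypoellipticity and fine estimates taken directly from Theorem \ref{serpolsym}.

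The first step is to verify that the sequence $\zeta_n = 1/\widehat{M}_n$ satisfies the growth hypothesis of Proposition \ref{lemfor135}, namely $|\zeta_n| \leq C' L'^{n}/M_n^m$ for some (resp. every) $L' > 0$. Setting $k = 0$ in (\ref{stk997733}) gives $\widehat{M}_n \geq (nm)!^s / C_0^n$. The multinomial identity $(nm)!/(n!)^m \in \ZZ_+$ yields $(nm)!^s \geq (n!)^{sm}$, and combining this with the standing inclusion $M_n \leq C L^n n!^s$ (Beurling case) or $M_n \leq C_L L^n n!^s$ for every $L > 0$ (Roumieu case) gives
\[
\frac{1}{\widehat{M}_n} \leq \frac{C_0^n}{(n!)^{sm}} \leq \frac{C^m (L^m C_0)^n}{M_n^m}
\]
in the Beurling case, with the analogous statement (allowing arbitrary $L'$ by choosing $L$ sufficiently small) in the Roumieu case. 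This is exactly the bound required of $\zeta_n$.

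The second step is to apply Proposition \ref{lemfor135} $(iii)$ directly. It delivers the absolute convergence of $\sum_{n=0}^\infty (a^w)^n/\widehat{M}_n$ in both $\mathcal{L}_b(\SSS^*(\RR^d), \SSS^*(\RR^d))$ and $\mathcal{L}_b(\SSS'^*(\RR^d), \SSS'^*(\RR^d))$, together with the operator identity $P(a^w) = \big( \sum_{n=0}^\infty a^{(\#n)}/\widehat{M}_n \big)^w$, via the polynomial symbols $a_n^{(\#n)} = a^{(\#n)}$ and Lemma \ref{ktn557713}. Theorem \ref{serpolsym} $(ii)$ places the Weyl symbol $\sum_n a^{(\#n)}/\widehat{M}_n$ in $\Gamma^{*,\infty}_{A_p,1}(\RR^{2d}) \subseteq \Gamma^{*,\infty}_{A_p,\rho}(\RR^{2d})$ (the inclusion holding because $\rho \leq 1$), and Theorem \ref{serpolsym} $(iii)$ asserts its $\Gamma^{*,\infty}_{A_p,\rho}$-hypoellipticity together with the fine estimates comparing it to $P(a(w))$; these are precisely the ``conclusions of Theorem \ref{serpolsym}'' promised in the last sentence of the corollary.

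Real-valuedness of the symbol is immediate: $a$ is real-valued by assumption, so each polynomial $a^{(\#n)}$ is real-valued by Lemma \ref{shproduct} together with (\ref{kktlt17}), and the sum inherits this property. The only step that is not a direct invocation of a previously established result is the coefficient comparison in the first paragraph, but it reduces to the elementary inequality $(nm)! \geq (n!)^m$ combined with the standing ultrapolynomial comparisons; no substantive obstacle remains, and the corollary is essentially a clean repackaging of Proposition \ref{lemfor135} and Theorem \ref{serpolsym}.
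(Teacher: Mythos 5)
Your proposal is correct and follows essentially the same route as the paper: the paper invokes Theorem \ref{serpolsym} $(ii)$--$(iii)$, Lemma \ref{ktn557713} and \cite[Proposition 3.1]{PP1} directly, which are precisely the ingredients bundled into Proposition \ref{lemfor135} $(iii)$ that you apply with $a_n=a$ and $\zeta_n=1/\widehat{M}_n$. Your explicit check that $1/\widehat{M}_n\leq C'L'^n/M_n^m$ via $(nm)!\geq (n!)^m$ and $M_p\subset p!^s$ (resp. $M_p\prec p!^s$) is the same coefficient verification the paper leaves implicit when specialising Proposition \ref{lemfor135}, so no gap remains.
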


\begin{proof} By Theorem \ref{serpolsym} $(ii)$, the series $\sum_{n=0}^{\infty} a^{(\# n)}/\widehat{M}_n$ absolutely converges in $\Gamma^{*,\infty}_{A_p,\rho}(\RR^{2d})$ and it is real-valued by Lemma \ref{ktn557713}. Lemma \ref{ktn557713} together with \cite[Proposition 3.1]{PP1} proves that $\sum_{n=0}^{\infty} (a^w)^n/\widehat{M}_n$ converges absolutely in both $\mathcal{L}_b(\SSS^*(\RR^d),\SSS^*(\RR^d))$ and $\mathcal{L}_b(\SSS'^*(\RR^d),\SSS'^*(\RR^d))$ and $\sum_{n=0}^{\infty} (a^w)^n/\widehat{M}_n=\left(\sum_{n=0}^{\infty} a^{(\# n)}/\widehat{M}_n\right)^w$. The rest follows from Theorem \ref{serpolsym}.
\end{proof}

We also have the following easy corollary of the proofs of Proposition \ref{lemfor135} and Theorem \ref{serpolsym} concerning finite order $\Psi$DOs.

\begin{corollary}
Let $a$ and $a^{(\# n)}$, $n\in\ZZ_+$, be as in Theorem \ref{serpolsym} with $m\in\ZZ_+$, $m\geq 2$, being the degree of the polynomial $a$. For each fixed $n\in\ZZ_+$, there exist $C',B'>0$ such that
\begin{gather}
\langle w\rangle^{nm}/C'\leq \left|a^{(\# n)}(w)\right|\leq C'\langle w\rangle^{nm},\,\, \forall w\in Q^c_{B'}\label{est1forfintorder}\\
\left|D^{\gamma}\left(a^{(\# n)}(w)-(a(w))^n\right)\right|\leq C'^{|\gamma|+1}\langle w\rangle^{nm-|\gamma|-2},\,\, \forall w\in\RR^{2d},\, \forall\gamma\in\NN^{2d}.\label{est2forfintorder}
\end{gather}
In particular, $a^{(\# n)}$ is hypoelliptic symbol in $\Gamma^{*,\infty}_{A_p,1}(\RR^{2d})$ and $(a^{(\# n)})^w=(a^w)^n$. Consequently, if $b\in \Gamma^{*,\infty}_{A_p,\rho}(\RR^{2d})$ satisfies $b\precsim f$ for some positive continuous function $f$ on $\RR^{2d}$ such that $f(w)/\langle w\rangle^{nm}\rightarrow 0$, as $|w|\rightarrow \infty$, then $a^{(\# n)}+b$ is a hypoelliptic symbol in $\Gamma^{*,\infty}_{A_p,\rho}(\RR^{2d})$ and $(a^{(\# n)}+b)^w=(a^w)^n+b^w$.
\end{corollary}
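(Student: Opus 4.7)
My approach is to reduce everything to the polynomial structure of $a^{(\#n)}$ provided by Lemma \ref{shproduct}: writing $a^{(\#n)}(w) = (a(w))^n + r_n(w)$ with $r_n = \sum_{j=1}^{[nm/2]} (a^{\#n})_j$, each $(a^{\#n})_j$ is a polynomial of degree at most $nm-2j$. I would first establish (\ref{est2forfintorder}) by routine polynomial bookkeeping: $D^{\gamma} r_n$ is a polynomial of degree at most $nm-2-|\gamma|$ (identically zero when $|\gamma|>nm-2$), so $|D^{\gamma}(a^{(\#n)}-a^n)(w)| \leq C'^{|\gamma|+1}\langle w\rangle^{nm-|\gamma|-2}$ on all of $\RR^{2d}$. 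Then (\ref{est1forfintorder}) follows by taking $\gamma=0$ and combining with (\ref{obk771553})--(\ref{ktsdr15}): the lower bound $(a(w))^n\geq \langle w\rangle^{nm}/C_1^n$ on $Q^c_{B_1}$ dominates the remainder $|r_n(w)|\leq C'\langle w\rangle^{nm-2}$ once $|w|$ is large enough, yielding both inequalities.

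Next, I would verify hypoellipticity of $a^{(\#n)}$ in $\Gamma^{*,\infty}_{A_p,1}(\RR^{2d})$. The lower bound in (\ref{est1forfintorder}) is far stronger than (\ref{dd1}), so only (\ref{dd2}) with $\rho=1$ requires checking. Since $a^{(\#n)}$ is a polynomial of degree $nm$, a standard bound gives $|D^{\alpha}_\xi D^{\beta}_x a^{(\#n)}(w)| \leq \tilde{C}^{|\alpha|+|\beta|+1}\langle w\rangle^{nm-|\alpha|-|\beta|}$; dividing by the lower bound $|a^{(\#n)}(w)|\geq \langle w\rangle^{nm}/C'$ gives (\ref{dd2}) (the $A_{\alpha}A_{\beta}$ factors are trivially absorbed since $A_\alpha A_\beta \geq 1$). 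The operator identity $(a^{(\#n)})^w = (a^w)^n$ is then exactly Lemma \ref{ktn557713}.

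For the perturbation claim, the hypothesis $f(w)/\langle w\rangle^{nm}\to 0$ together with $b\precsim f$ and (\ref{est1forfintorder}) gives $|b(w)|\leq |a^{(\#n)}(w)|/2$ for $|w|$ large, so $|a^{(\#n)}(w)+b(w)|\asymp \langle w\rangle^{nm}$ on some $Q^c_{B''}$, establishing the analogue of (\ref{dd1}). For the derivative bound, I would split $D^{\gamma}(a^{(\#n)}+b)$ and estimate the polynomial part by $\tilde{C}^{|\gamma|+1}\langle w\rangle^{nm-|\gamma|}$ and the $b$-part by the subordination bound $|D^{\gamma}b(w)|\leq h^{|\gamma|}A_{|\gamma|}f(w)\langle w\rangle^{-\rho|\gamma|}$; both are dominated by a multiple of $|a^{(\#n)}(w)+b(w)|\cdot h^{|\gamma|}A_{|\gamma|}\langle w\rangle^{-\rho|\gamma|}$ since $\rho\leq 1$ and $f(w)\lesssim \langle w\rangle^{nm}\asymp |a^{(\#n)}(w)+b(w)|$. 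The operator identity $(a^{(\#n)}+b)^w=(a^w)^n+b^w$ follows from linearity of Weyl quantization and Lemma \ref{ktn557713}. The only mild care-point is aligning the $A_{\alpha}A_{\beta}$ factors with the split estimate, but since these factors exceed $1$ the polynomial half absorbs trivially.
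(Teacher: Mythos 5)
Your proposal is correct, and its overall skeleton (prove (\ref{est2forfintorder}) first, deduce (\ref{est1forfintorder}) from it via the ellipticity bounds (\ref{obk771553})--(\ref{ktsdr15}), get hypoellipticity from the resulting two-sided polynomial bounds, and invoke Lemma \ref{ktn557713} plus linearity for the operator identities) matches the paper's. The one place where you genuinely diverge is the key estimate (\ref{est2forfintorder}): the paper writes $D^{\gamma}\bigl(a^{(\# n)}(w)-(a(w))^n\bigr)$ via the explicit expansion (\ref{terforthediff}) and re-runs the combinatorial estimation from the proof of Proposition \ref{lemfor135} $(ii)$, which keeps track of how the constants depend on $n$, $j$ and $\gamma$; you instead observe that, by Lemma \ref{shproduct}, the remainder $r_n=\sum_{j\geq 1}(a^{\# n})_j$ is a single polynomial of degree at most $nm-2$, so for fixed $n$ only finitely many derivatives are nonzero and crude degree counting already yields the bound $C'^{|\gamma|+1}\langle w\rangle^{nm-|\gamma|-2}$ (with the Beurling ``for every $h$'' quantifier in (\ref{dd2}) handled for free, since a sufficiently large $C=C(h)$ absorbs the finitely many nonzero derivative orders). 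Your route is more elementary and perfectly adequate here because the corollary only asserts estimates for each fixed $n$ with $n$-dependent constants; the paper's route is the one that would be needed if uniformity in $n$ were required, as it is in Theorem \ref{serpolsym}. Your treatment of the perturbation claim (absorbing $b$ via $f=o(\langle w\rangle^{nm})$, splitting the derivative estimate, and noting $A_{\alpha}\geq 1$ and $\rho\leq 1$) is also sound and is the standard argument the paper leaves implicit.
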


\begin{proof} Notice that (\ref{est1forfintorder}) follows from (\ref{est2forfintorder}). To prove (\ref{est2forfintorder}), write $D^{\gamma}(a^{(\# n)}(w)-(a(w))^n)$ as in (\ref{terforthediff}) and estimate the latter as in the proof of Proposition \ref{lemfor135} $(ii)$.
\end{proof}

In the rest of the section we obtain spectral asymptotics for infinite order pseudo-differential operators related to the symbols considered in Theorem \ref{serpolsym}.

\begin{theorem}
\label{thExWeyl} Let $a$ be
real-valued elliptic Shubin polynomial of degree $m\geq 2$ with principal symbol $a'(w)=\sum_{|\gamma|=m}c_{\gamma}w^{\gamma}>0$, $w\in \mathbb{R}^{d}\setminus{\{0\}}$. Let $s,\rho$ and the sequence $\widehat{M}_{n}$ be as in Theorem \ref{serpolsym}. Furthermore, let $P$ be given by \eqref{especial ultrapolydef} and let the symbol $b\in\Gamma^{*,\infty}_{A_p,\rho}(\RR^{2d})$ be real-valued and satisfy: for every $h>0$ there exists $C>0$ (resp. there exist $h,C>0$) such that
\beq\label{eqlowerorderterm}
|D^{\alpha}_w b(w)|\leq Ch^{|\alpha|}A_{\alpha}\frac{P(a'(w))}{\langle w\rangle^{\rho(|\alpha|+1)}}, \quad \forall w\in\RR^{2d},\, \forall \alpha\in\NN^{2d}.
\eeq
Then,
$$
A_{1}= P(a^{w}) + b^{w} \mbox{ and } A_2= \left( P \circ a \right)^{w}+b^{w}
$$
are $\Gamma^{*,\infty}_{A_p,\rho}$-hypoelliptic pseudo-differential operators with spectral asymptotics
\beq
\label{weylextheq1}
N_{i}(\lambda)\sim c\cdot (P^{-1}(\lambda))^{\frac{2d}{m}}
\quad \mbox{and} \quad
\lambda^{(i)}_{j}=P\left(( j/c)^{\frac{m}{2d}}(1+o(1))\right)
\eeq
where
\beq
\label{eqcstWeylasymptotics}
c=\frac{\pi}{(2\pi)^{d+1}d}\int_{\mathbb{S}^{2d-1}} \frac{d\vartheta}{(a'(\vartheta))^{\frac{2d}{m}}}\:,
\eeq
 $N_i$ is the spectral counting function of $A_i$ and $\{\lambda^{(i)}_{j}\}_{j\in\mathbb{N}}$ its sequence of eigenvalues, $i=1,2$. Furthermore, if in addition the sequence $\widehat{M}_{n}$ is log-convex, i.e. the condition $(M.1)$ holds for it, we also have
 \beq
\label{weylextheq2}
N_{i}(\lambda)\sim c\cdot (\widehat{M}^{-1}(\ln \lambda))^{\frac{2d}{m}}
\quad \mbox{and} \quad
\lambda^{(i)}_{j}=e^{\widehat{M}\left(( j/c)^{\frac{m}{2d}}(1+o(1))\right)}
\eeq
with $\widehat{M}(y)=\sup  _{n\in\NN}\ln_+ y^{n}/\widehat{M}_{n}$, $y>0$, the associated function of the sequence $\widehat{M}_{n}$.
\end{theorem}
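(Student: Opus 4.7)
The plan is to apply Theorem \ref{Weylth1} with the choices $f(y)=P(y^m)$ and $\Phi(\vartheta)=(a'(\vartheta))^{1/m}$, after first establishing that the closures of $A_1$ and $A_2$ are self-adjoint operators on $L^2(\RR^d)$ whose spectra are $+\infty$-divergent sequences of real eigenvalues. The Weyl symbols of $A_1$ and $A_2$ are, respectively, $\sigma_1=\sum_n a^{(\# n)}/\widehat M_n+b$ (by Corollary \ref{harmosers}) and $\sigma_2=P\circ a+b$. By Theorem \ref{serpolsym}~$(iii)$--$(iv)$, each of $\sum_n a^{(\# n)}/\widehat M_n$, $P(a)$ and $P(a')$ is a real-valued hypoelliptic symbol and satisfies $|\sigma_i(w)-P(a'(w))|\le CP(a'(w))/\langle w\rangle^\rho$ on $Q^c_B$ for $B$ large enough. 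The growth condition \eqref{eqlowerorderterm} is precisely what is needed for $b$ to be absorbed into that error term without spoiling either \eqref{dd1} or \eqref{dd2}, so the full symbols $\sigma_1,\sigma_2$ are $\Gamma^{*,\infty}_{A_p,\rho}$-hypoelliptic, real-valued, and tend to $+\infty$; Proposition \ref{discretness_of_spe} then supplies the required spectral structure.

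Next I would verify the hypotheses of Theorem \ref{Weylth1} for $f(y)=P(y^m)$. Positivity and strict monotonicity on $[0,\infty)$ are immediate since $P$ has nonnegative Taylor coefficients, smoothness gives absolute continuity on compacta, and the coefficient bound on $\widehat M_n$ combined with $M_p\subset p!^s$ yields $f(y)\leq Ce^{M(hy)}$ for some $h>0$ (Beurling) resp.\ every $h>0$ (Roumieu). For \eqref{weyleq2} one writes $yf'(y)/f(y)=mU(y^m)$ with $U(u)=uP'(u)/P(u)$, and observes that the ``dominant index'' in the series $P(u)$ tends to $\infty$ with $u$ (because $\widehat M_n\ge (nm)!^s/C_0^n$), so $U(u)\to\infty$. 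The key bound \eqref{weyleq3} follows by combining the homogeneity $a'(r\vartheta)=r^m a'(\vartheta)$ with the estimate $\sigma_i(r\vartheta)=P(r^ma'(\vartheta))\bigl(1+O(r^{-\rho})\bigr)$, together with the super-polynomial growth of $P$, which makes $P((1\pm\varepsilon)^m r^m a'(\vartheta))$ dominate, resp.\ be dominated by, any polynomial factor times $P(r^m a'(\vartheta))$ uniformly in $\vartheta\in\mathbb{S}^{2d-1}$ (using the ellipticity of $a'$, i.e.\ $a'(\vartheta)\ge c_0>0$ on the sphere). Theorem \ref{Weylth1} then gives $N_i(\lambda)\sim c(f^{-1}(\lambda))^{2d}=c(P^{-1}(\lambda))^{2d/m}$ with $c$ as in \eqref{eqcstWeylasymptotics}; a short computation shows that the Weyl constant $\gamma$ of Theorem \ref{Weylth1} satisfies $\gamma^{2d}=1/c$, converting $\lambda^{(i)}_j=f(\gamma j^{1/(2d)}(1+o(1)))$ into $P((j/c)^{m/(2d)}(1+o(1)))$, which is \eqref{weylextheq1}.

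For the log-convex refinement \eqref{weylextheq2} I would establish $\ln P(\lambda)\sim\widehat M(\lambda)$, $\lambda\to\infty$. Log-convexity of $\widehat M_n$ makes $\widehat m_n=\widehat M_n/\widehat M_{n-1}$ monotone, so the maximal term in $P(\lambda)=\sum\lambda^n/\widehat M_n$ is attained at an index $N(\lambda)$ which is bounded polynomially by $\lambda^{1/(ms)}$ via $\widehat M_n\ge(nm)!^s/C_0^n$. Splitting the sum around $N(\lambda)$ and using geometric decay on each side gives $e^{\widehat M(\lambda)}\le P(\lambda)\le C\lambda^{\kappa}e^{\widehat M(\lambda)}$ for some $\kappa>0$; since $\widehat M(\lambda)$ grows at least like $\lambda^{1/(ms)}$, the polynomial prefactor disappears after taking logarithms, so $\ln P(\lambda)\sim\widehat M(\lambda)$. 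Inverting this asymptotic relation gives $P^{-1}(\lambda)\sim\widehat M^{-1}(\ln\lambda)$, which upon substitution in \eqref{weylextheq1} yields the first half of \eqref{weylextheq2}; substituting inside the argument of $P$ in the eigenvalue formula and absorbing the multiplicative $(1+o(1))$ factor into the argument of the strictly increasing unbounded function $\widehat M$ yields the second half.

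The main obstacle I expect is the careful verification of \eqref{weyleq3}: one has to track errors coming from $\sum_n a^{(\# n)}/\widehat M_n-P(a')$ (resp.\ $P(a)-P(a')$) and from $b$ simultaneously, and show that their cumulative contribution is negligible compared with $P(r^m a'(\vartheta))$ uniformly in $\vartheta$, all while keeping in mind that $P$ grows super-polynomially so any multiplicative loss of the form $(1\pm\varepsilon)^m$ inside $P$ is genuinely non-trivial. The secondary difficulty lies in the log-convex passage: transferring the asymptotic $P(\lambda)\asymp e^{\widehat M(\lambda)}$ (valid only up to polynomial factors) into inverse functions on the exponential scale without losing the $o(1)$ precision required by \eqref{weylextheq2}.
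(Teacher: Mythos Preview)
Your argument for \eqref{weylextheq1} is essentially the paper's: apply Theorem~\ref{Weylth1} with $f(y)=P(y^m)$ and $\Phi(\vartheta)=(a'(\vartheta))^{1/m}$ after verifying hypoellipticity via Theorem~\ref{serpolsym}, the growth condition \eqref{weyleq2} via $uP'(u)/P(u)\to\infty$, and the two-sided bounds \eqref{weyleq3} from $\sigma_i(r\vartheta)=P(r^ma'(\vartheta))(1+O(r^{-\rho}))$. (Incidentally, the super-polynomial growth of $P$ is not needed for \eqref{weyleq3}: since $P$ is increasing, the trivial bounds $P((1-\varepsilon)^m y)\le P(y)\le P((1+\varepsilon)^m y)$ absorb the $(1+O(r^{-\rho}))$ factor directly.)

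For \eqref{weylextheq2} you take a genuinely different route from the paper. The paper does \emph{not} invert the relation $\ln P\sim\widehat M$; instead it observes that the bounds $e^{\widehat M(y)}\le P(y)\le C_\varepsilon e^{\widehat M((1+\varepsilon)y)}$ mean that \eqref{weyleq3} also holds for the choice $f(y)=e^{\widehat M(y^m)}$, and that $(M.1)$ gives the integral representation $\widehat M(y^m)=\int_0^{y^m}\widehat m(t)\,dt/t$, whence $yf'(y)/f(y)=m\,\widehat m(y^m)\to\infty$. Theorem~\ref{Weylth1} is then applied a \emph{second} time with this new $f$, yielding \eqref{weylextheq2} directly. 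This sidesteps the inversion issue entirely.

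Your route can be made to work, but as written it has a gap. First, the claim ``$\widehat M(\lambda)$ grows at least like $\lambda^{1/(ms)}$'' is backwards: the lower bound $\widehat M_n\ge (nm)!^s/C_0^n$ from \eqref{stk997733} gives an \emph{upper} bound $\widehat M(\lambda)\lesssim\lambda^{1/(ms)}$; in general only $\widehat M(\lambda)/\ln\lambda\to\infty$ holds (take $n$ fixed in the supremum), which is still enough for $\ln P\sim\widehat M$. More seriously, the inversion ``$\ln P(\lambda)\sim\widehat M(\lambda)\Rightarrow P^{-1}(\lambda)\sim\widehat M^{-1}(\ln\lambda)$'' is not automatic: with your bound $P(\lambda)\le C\lambda^\kappa e^{\widehat M(\lambda)}$ one obtains $P^{-1}(\mu)\ge\widehat M^{-1}(\ln\mu-O(\ln\widehat M^{-1}(\ln\mu)))$, and showing this is $\sim\widehat M^{-1}(\ln\mu)$ requires $\widehat M(z)-\widehat M((1-\delta)z)\gg \ln z$, which can fail (e.g.\ if $\widehat m_n\sim e^{cn}$, the counting function $\widehat m(t)\asymp\ln t$ and the difference is only $\asymp\ln z$). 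The fix is to use the sharper standard bound $P(y)\le C_\varepsilon e^{\widehat M((1+\varepsilon)y)}$ instead: then $P^{-1}(\mu)\ge(1+\varepsilon)^{-1}\widehat M^{-1}(\ln\mu-C_\varepsilon)$, and under $(M.1)$ one has $\widehat M(z)-\widehat M((1-\delta)z)\ge\widehat m((1-\delta)z)\ln(1/(1-\delta))\to\infty$, which suffices for $\widehat M^{-1}(u-C)\sim\widehat M^{-1}(u)$. The paper's re-application of Theorem~\ref{Weylth1} is the cleaner path precisely because it avoids this delicate inversion.
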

\begin{proof}
The $\Gamma^{*,\infty}_{A_p,\rho}$-hypoellipticity follows at once from the estimates obtained in Theorem \ref{serpolsym} and the assumption (\ref{eqlowerorderterm}) on the ``lower order'' perturbation $b$. Write $b_i$ for the symbol of $A_i$, $i=1,2$. It is obvious that for each $0<\varepsilon<1$ one has bounds of the form
$$
c_{\varepsilon} P((1-\varepsilon)r^{m}a'(\vartheta))\leq b_{i}(r\vartheta)\leq C_{\varepsilon} P((1+\varepsilon)r^{m}a'(\vartheta)),  \quad r>B_{\varepsilon}, \ \vartheta\in \mathbb{S}^{2d-1}, \ i=1,2.
$$
Thus, both symbols $b_1$ and $b_2$ satisfy the lower and upper bounds (\ref{weyleq3}) with $f(y)=P(y^{m})$ and $\Phi(\vartheta)=(a'(\vartheta))^{1/m}$. Theorem \ref{Weylth1} then yields (\ref{weylextheq1}) if we verify that this $f$ satisfies (\ref{weyleq2}).  Let $k\in\ZZ_+$ be arbitrary but fixed. We have
\begin{align*}
\frac{y f'(y)}{f(y)}&\geq \frac{1}{f(y)}\sum_{n=k}^{\infty}\frac{mny^{mn}}{\widehat{M}_n}\geq mk-\frac{mk}{f(y)}\sum_{n=0}^{k-1}\frac{y^{mn}}{\widehat{M}_n}=mk+O_{k}\left(\frac{y^{mk}}{f(y)}\right)
\\
&=mk+o_{k}(1),
\end{align*}
since
$$
y^{-mk}f(y)\geq y^{-mk} y^{m(k+1)}/\widehat{M}_{k+1}=y^m/\widehat{M}_{k+1}\to\infty,\,\, \mbox{as}\,\, y\rightarrow \infty.
$$
Thus we have $\ds\liminf_{y\rightarrow\infty}y f'(y)/f(y)\geq mk$. As $k$ was arbitrary, we conclude that (\ref{weyleq2}) holds and hence (\ref{weylextheq1}) has been established.

On the other hand, given an arbitrary $0<\varepsilon<1$, we have bounds
$$
e^{\widehat{M}(y)}\leq P(y)\leq \frac{1+\varepsilon}{\varepsilon} \: e^{\widehat{M}((1+\varepsilon) y)}, \quad y>1.
$$
So, the estimates (\ref{weyleq3}) hold with $f(y)=e^{\widehat{M}(y^{m})}$ and $\Phi(\vartheta)=(a'(\vartheta))^{1/m}$ as well. Under the assumption $(M.1)$ for the sequence $\widehat{M}_n$, we have the representation \cite[p. 50]{Komatsu1}
$$
\widehat{M}(y^{m})=\int_{0}^{y^{m}}\frac{\widehat{m}(t)}{t}dt, \quad y\geq 1,
$$
where $\widehat{m}$ is the counting function of the sequence $\widehat{m}_{p}=\widehat{M}_{p}/\widehat{M}_{p-1}$, $p\in\mathbb{Z}_{+}$. Hence, $f(y)=e^{\widehat{M}(y^{m})}$ also satisfies
$$
\frac{y f'(y)}{f(y)}= m \cdot \widehat{m}(y^{m})\to\infty, \quad \mbox{as}\quad y\to\infty.
$$
The asymptotic formulae (\ref{weylextheq2}) follow once again from Theorem \ref{Weylth1}, which completes the proof of the theorem.
\end{proof}

It turns out that when the closure of the differential operator $a^w$ has only non-negative eigenvalues and $b=0$, the spectrum of the operator $\overline{A}_1$ from Theorem \ref{thExWeyl} has a very simple structure:

\begin{corollary}\label{cor easy case eigenvalues} Under the hypotheses of Theorem \ref{serpolsym} and additionally assuming that the eigenvalues of the closure of the differential operator $a^w$ are non-negative, the spectrum of the maximal (equivalently, minimal) realisation in $L^{2}(\RR^d)$ of the pseudo-differential operator $P(a^{w})$ is given by its eigenvalues, which are given by the sequence $\{P(\mu_j)\}_{j\in\mathbb{N}}$ where $\{\mu_j\}_{j\in\mathbb{N}}$ is the sequence of eigenvalues of $a^w$ (taking multiplicities into account in both cases). Furthermore, for each $j\in\mathbb{N}$, the eigenspace of $P(a^{w})$ that corresponds to $P(\mu_j)$ coincides with the eigenspace that corresponds to $\mu_{j}$.
 \end{corollary}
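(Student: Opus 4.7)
The plan is to exploit the operator series representation $P(a^w)=\sum_{n=0}^{\infty}(a^w)^n/\widehat{M}_n$ provided by Corollary \ref{harmosers}, combined with the spectral description of $\overline{a^w}$ furnished by Proposition \ref{discretness_of_spe}. Since $a$ is a real-valued elliptic Shubin polynomial with $a(w)\to+\infty$ as $|w|\to\infty$, the latter proposition supplies an orthonormal basis $\{\phi_j\}_{j\in\NN}\subset\SSS^{*}(\RR^d)$ of $L^2(\RR^d)$ consisting of eigenfunctions of $\overline{a^w}$ with real eigenvalues $\mu_j\to+\infty$; by the additional hypothesis, $\mu_j\geq 0$ for every $j$. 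This ready-made orthonormal basis is precisely what will diagonalise $P(a^w)$.

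First I would verify that $P(a^w)\phi_j=P(\mu_j)\phi_j$. Since the series $\sum_{n=0}^{\infty}(a^w)^n/\widehat{M}_n$ converges absolutely in $\mathcal{L}_b(\SSS^{*}(\RR^d),\SSS^{*}(\RR^d))$ by Corollary \ref{harmosers}, and each $\phi_j\in\SSS^{*}(\RR^d)$ satisfies $(a^w)^n\phi_j=\mu_j^n\phi_j$ by iterating the eigenvalue relation inside $\SSS^{*}(\RR^d)$, termwise evaluation yields
\[
P(a^w)\phi_j=\sum_{n=0}^{\infty}\frac{\mu_j^n}{\widehat{M}_n}\phi_j=P(\mu_j)\phi_j
\]
as an identity in $\SSS^{*}(\RR^d)$, hence a fortiori in $L^2(\RR^d)$. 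Next I would invoke Proposition \ref{discretness_of_spe} directly for $P(a^w)$: its Weyl symbol $\sum_{n=0}^{\infty}a^{(\#n)}/\widehat{M}_n$ is real-valued (Lemma \ref{ktn557713} applied term by term) and $\Gamma_{A_p,\rho}^{*,\infty}$-hypoelliptic (Theorem \ref{serpolsym}$(iii)$), and it tends to $+\infty$ at infinity thanks to the comparison $P(a(w))\sim$ symbol of $P(a^w)$ supplied by (\ref{ttvv19911}) together with (\ref{ktsdr15}). Thus the maximal and minimal realisations of $P(a^w)$ on $L^2(\RR^d)$ coincide in a single self-adjoint operator $T$ with purely discrete spectrum, and the $\phi_j$ certainly belong to $\mathrm{dom}(T)$ since $\SSS^{*}(\RR^d)\subset\mathrm{dom}(T)$.

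At this point, since $\{\phi_j\}$ is an orthonormal basis of $L^2(\RR^d)$ consisting of eigenvectors of the self-adjoint $T$, the spectral theorem forces the spectrum of $T$ to be exactly $\{P(\mu_j)\}_{j\in\NN}$ with the matching multiplicities, and each eigenspace $\ker(T-P(\mu_j))$ to be spanned by those $\phi_k$ for which $P(\mu_k)=P(\mu_j)$. The one delicate point, which I expect to be the main obstacle, is showing that the eigenspaces match exactly rather than only inclusion in one direction, i.e.\ that $P(\mu_k)=P(\mu_j)$ forces $\mu_k=\mu_j$. This reduces to the strict monotonicity of $P$ on $[0,\infty)$: because all coefficients $1/\widehat{M}_n$ are positive, $P'(\lambda)=\sum_{n\geq 1}n\lambda^{n-1}/\widehat{M}_n>0$ for $\lambda>0$ and $P'(0)=1/\widehat{M}_1>0$, so $P$ is strictly increasing on $[0,\infty)$. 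Hence $P(\mu_k)=P(\mu_j)\Longleftrightarrow\mu_k=\mu_j$, and the eigenspaces of $T$ and of $\overline{a^w}$ associated with corresponding eigenvalues coincide, completing the proof.
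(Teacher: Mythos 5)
Your proof is correct, and its overall skeleton matches the paper's: diagonalise $P(a^w)$ on the orthonormal eigenbasis $\{\phi_j\}\subset\SSS^*(\RR^d)$ of $\overline{a^w}$ via the absolutely convergent operator series from Corollary \ref{harmosers}, use hypoellipticity, real-valuedness and divergence of the symbol to get a self-adjoint realisation with purely discrete spectrum, and finish with the strict monotonicity of $P$ on $[0,\infty)$ together with the non-negativity of the $\mu_j$. The one step where you genuinely diverge is the identification of an arbitrary eigenfunction of $P(a^w)$: the paper expands such an eigenfunction $\varphi\in\SSS^*(\RR^d)$ in the basis $\{u_j\}$ using the absolute convergence of the eigenfunction expansion \emph{in} $\SSS^*(\RR^d)$ (citing \cite[Theorem 4.1]{djvin}) and then applies $P(a^w)$ termwise by its continuity on $\SSS^*(\RR^d)$, obtaining $\lambda c_j=P(\mu_j)c_j$; you instead work entirely in $L^2$, pairing $T\psi=\lambda\psi$ against each $\phi_k$ and using self-adjointness of $T$ to get $(\lambda-P(\mu_k))(\psi,\phi_k)=0$. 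Your route avoids invoking the ultradifferentiable eigenfunction-expansion result altogether (it only needs completeness of the basis in $L^2$ plus the self-adjointness already provided by Proposition \ref{discretness_of_spe}), at the cost of leaning on the spectral-theorem-type fact that an orthonormal basis of eigenvectors of a self-adjoint operator exhausts its eigenspaces --- a standard fact, and one should just note that the spectrum is the closure of $\{P(\mu_j)\}_j$, which adds no points since $P(\mu_j)$ is nondecreasing and divergent. Both arguments are sound; the paper's gives slightly more (the expansion converges in $\SSS^*$), yours is marginally more elementary.
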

 \begin{proof}
Denote as $A$ the unbounded operator on $L^2(\RR^d)$ given by $P(a^{w})$. Its maximal realisation is $\overline{A}$. As we have repeatedly used through this section, we know that  the spectrum of $\overline{A}$ consists of a sequence of real isolated eigenvalues diverging to $+\infty$, each of them with finite multiplicity and eigenfunctions belonging to $\SSS^*(\RR^d)$ (cf. Corollary \ref{harmosers} and Proposition \ref{discretness_of_spe}). The same applies for the closure of $a^{w}$. Find an orthonormal basis $\{u_{j}|\: j\in\mathbb{N}\}$ of $L^{2}(\mathbb{R}^{2d})$ such that for each $j\in\mathbb{N}$ the function $u_{j}\in \mathcal{S}^{\ast}(\mathbb{R}^{d})$ is an eigenfunction corresponding to $\mu_{j}$. Let $\lambda$ be an eigenvalue of $\overline{A}$ and $0\neq\varphi\in\SSS^*(\RR^d)$ an eigenfunction that corresponds to $\lambda$. Clearly, $\overline{A}u_{j}=P(a^{w})u_{j}=P(\mu_{j})u_{j}$. Denoting $c_{j}=(\varphi,u_{j})$, we have
\beqs
\sum_{j=0}^{\infty}\lambda c_{j}u_{j}=\lambda\varphi =P(a^{w})\varphi=\sum_{j=0}^{\infty}c_{j}P(a^{w})u_{j}=\sum_{j=0}^{\infty} c_{j}P(\mu_{j})u_{j},
\eeqs
where the last series absolutely converges in the space $\SSS^*(\RR^d)$ since we have $P(a^w)\in \mathcal{L}_b(\mathcal \SSS^*(\RR^d),\mathcal \SSS^*(\RR^d))$ in view of Corollary \ref{harmosers}. Note that the series $\sum_j c_j u_j$ absolutely converges to $\varphi$ in $\SSS^*(\RR^d)$ because of \cite[Theorem 4.1]{djvin}. Thus,
\beq\label{kkrtt99911}
\lambda c_{j}=c_{j}P(\mu_{j}),\,\, j \in\NN.
\eeq
As $\varphi\neq0$, there exists $j\in\NN$ such that $c_j\neq0$ and hence $\lambda=P(\mu_{j})$, as claimed. If $k\in\NN$ is such that $\mu_{k}\neq\mu_{j}$, then (\ref{kkrtt99911}) implies $c_{k}=0$ (as $\mu_p$, $p\in\NN$, are non-negative and $P$ is strictly increasing on $[0,\infty)$). Thus, $\varphi$ belongs to the eigenspace of the closure of $a^w$ which corresponds to $\mu_j$. This automatically implies that the eigenspace of $\overline{A}$ which corresponds to $P(\mu_{j})$ is a subspace of the eigenspace which corresponds to $\mu_j$. On the other hand, each $u_{j}$ is an eigenfunction of $\overline{A}$ with eigenvalue $P(\mu_{j})$. The proof of the corollary is complete.
\end{proof}

The prototypical example for all results in this section is the polynomial symbol $a(x,\xi)=|x|^2+|\xi|^2$. Then, $a^w$ is the harmonic oscillator $H=|x|^2-\Delta$. Hence
$$
P(H)=\operatorname*{Id}+\sum_{n=1}^{\infty}H^n/\widehat{M}_n=\left(\sum_{n=0}^{\infty}a^{(\# n)}/\widehat{M}_n\right)^w
$$
is hypoelliptic, where $a^{(\# n)}$ are defined by (\ref{kktlt17}); in this case the symbol $P\circ a$ also considered in Theorem \ref{serpolsym} is $P(|w|^{2})=1+\sum_{n=1}^{\infty}|w|^{2n}/\widehat{M}_n$. Furthermore, by Theorem \ref{thExWeyl}, we have for the operators $A_1=P(H)+b^{w}$ and $A_2=(P\circ a)^{w}+b^w$
$$
N_{i}(\lambda)\sim \frac{1}{2^d d!}\: (P^{-1}(\lambda))^{d}
\quad \mbox{and} \quad
\lambda^{(i)}_{j}=P\left(( jd!)^{\frac{1}{d}}2(1+o(1))\right), \quad i=1,2,
$$
where $\{\lambda^{(i)}_{j}\}_{j\in\mathbb{N}}$ and $N_{i}$ are their sequences of eigenvalues and their spectral counting functions, and $b$ satisfies (\ref{eqlowerorderterm}) with $a'(w)=|w|^{2}$.

The next example treats an instance of a function $P$ for which asymptotic formulae (\ref{weylextheq2}) can even be made more explicit.

\begin{example}
\label{Weylex4} Let the symbols $a,a'$ and the parameters $s,\rho$ be as in Theorem \ref{thExWeyl}. We consider here
$$
P(\lambda)=\sum_{n=0}^{\infty} \frac{(h\lambda)^{n}}{n^{snm}}, \quad \lambda\in\mathbb{R},
$$
where the parameter $h>0$. The sequence $\widehat{M}_{n}= h^{-n}n^{snm}$ clearly satisfies (\ref{stk997733}) and $(M.1)$, so that Theorem \ref{thExWeyl} applies to conclude
(\ref{weylextheq2}) for the $\Gamma^{*,\infty}_{A_p,\rho}$-hypoelliptic pseudo-differential operators $A_1$ and $A_2$ under consideration. Observe that
\beqs
e^{-s} \exp\left(\frac{sy^{1/s}}{e}\right) \leq \sup_{p\in \mathbb{Z}_{+}} \frac{y^{p}}{p^{sp}}\leq e^s \exp\left(\frac{sy^{1/s}}{e}\right), \quad y\geq e^{s},
\eeqs
because the only critical point of $g(t)=t\ln y - st \ln t$ lies at $t= e^{-1}y^{1/s}$. Thus
$$
e^{-sm} \exp\left(\frac{sm\: (hy)^{\frac{1}{sm}}}{e}\right) \leq \exp\left(\widehat{M}(y)\right)\leq e^{sm} \exp\left(\frac{sm\: (hy)^{\frac{1}{sm}}}{e}\right), \quad y\geq \frac{e^{sm}}{h},
$$
whence
$$ \widehat{M}^{-1}(\ln \lambda)\sim \frac{1}{h} \left(\frac{e\ln \lambda}{sm}\right)^{sm}, \quad \lambda\to\infty.
$$
Combining these two facts with (\ref{weylextheq2}), we deduce that
$$
N_{i}(\lambda)\sim \frac{ e^{2ds} c}{h^{2d/m}(sm)^{2ds}}\:(\ln \lambda)^{2ds}
\quad \mbox{and} \quad
\lambda^{(i)}_{j}=\exp\left( \frac{smh^{\frac{1}{sm}}}{e} \left( \frac{j}{c}\right)^{\frac{1}{2ds}}(1+o(1))\right),
$$
with $c$ given by (\ref{eqcstWeylasymptotics}).

In the special case of the symbol $a(w)=|w|^{2}$ of the harmonic oscillator, the constant is $c=(2^{d}d!)^{-1}$ and we obtain
$$
N_{i}(\lambda)\sim \frac{ e^{2ds}}{h^{d}s^{2ds} 2^{d(2s+1)} d!}\:(\ln \lambda)^{2ds}\,\,\,\,
\mbox{and}\,\,\,\,
\lambda^{(i)}_{j}= \exp\left(\frac{2^{\frac{2s+1}{2s}}sh^{\frac{1}{2s}}(d!)^{\frac{1}{2ds}}}{e} \: j^{\frac{1}{2ds}}(1+o(1))\right).
$$
\end{example}

We end this section with a remark.

\begin{remark} Theorem \ref{thExWeyl} gives the spectral asymptotics of the pseudo-differential operator $A=P(a^w)$ from Corollary \ref{cor easy case eigenvalues}, but alternatively they can also be obtained from results for operators of finite order
(which is of course not the case for the pseudo-differential operators $A_1$ and $A_2$ from Theorem \ref{thExWeyl}, whose analysis requires the use of Theorem \ref{Weylth1} in an essential way). In fact, retaining the notation and assumptions from Corollary \ref{cor easy case eigenvalues}, and writing $N_{P(a^{w})}$ and $N_{a^w}$ for the spectral counting functions of $P(a^w)$ and $a^w$ and $\{\lambda_j\}_{j\in\mathbb{N}}$ and $\{\mu_j\}_{j\in\mathbb{N}}$ for their sequences of eigenvalues, respectively,  we have that $N_{P(a^w)}(\lambda)= N_{a^w}(P^{-1}(\lambda))$. The well-known facts $N_{a^w}(\lambda)\sim c \lambda^{2d/m}$ and $\mu_{j}\sim (j/c)^{m/2d}$  (which also follow from \cite[Theorem 5.2]{PPV-BMS}) and Corollary \ref{cor easy case eigenvalues} then yield directly $N_{P(a^w)}(\lambda)\sim c\cdot (P^{-1}(\lambda))^{2d/m}$ and $\lambda_j=P(\mu_j)= P((j/c)^{m/2d}(1+o(1)))$.
\end{remark}

\section{Ellipticity. Shubin-Sobolev space of infinite order}
\label{Elliptic Shubin}
\subsection{Elipticity}
We start this section by defining a useful notion of ellipticity for our symbol classes.
\begin{definition}
A positive continuous function $f$ on $\RR^{2d}$ will be called $\Gamma_{A_p,\rho}^{*,\infty}$-admissible if both $f$ and $1/f$ are of ultrapolynomial growth of class $*$ and there exists a symbol $a\in\Gamma_{A_p,\rho}^{*,\infty}(\RR^{2d})$ which is hypoelliptic and satisfies: there exist $c,C,B>0$ such that
\beq\label{krttti113}
cf(w)\leq |a(w)|\leq Cf(w),\,\,\, \mbox{for all}\,\, w\in Q^c_B.
\eeq
In this case, we say that $a$ is an $f-\Gamma_{A_p,\rho}^{*,\infty}$-elliptic symbol.
\end{definition}
When $a$ is $f-\Gamma_{A_p,\rho}^{*,\infty}$-elliptic, if there is no danger of confusion we will often abbreviate terminology and simply call it $f$-elliptic.\\
\indent The next two remarks are related to examples of $\Gamma_{A_p,\rho}^{*,\infty}$-admissible functions.

\begin{remark}\label{ktsvv1573}
Let $f$ be positive, continuous and temperate, i.e. there exist $C,s>0$ such that $f(w+\tilde{w})\leq Cf(w)\langle \tilde{w}\rangle^s$, $\forall w,\tilde{w}\in\RR^{2d}$; notice that this automatically implies that $f$ is bounded from below by $\langle w\rangle^{-s}$ and from above by $\langle w\rangle^s$. Assume that $f$ additionally satisfies the following slow variation condition: there exist $c,C>0$ such that $$|w-\tilde{w}|\leq c\langle \tilde{w}\rangle^{\rho}\Rightarrow  f(\tilde{w})/C\leq f(w)\leq Cf(\tilde{w}).$$
Then $f$ is $\Gamma_{A_p,\rho}^{*,\infty}$-admissible.

The proof that there exists $a\in\Gamma_{A_p,\rho}^{*,\infty}(\RR^{2d})$ that is $f$-elliptic is the same as in the distributional setting (see \cite[Section 1.3.2, p. 38]{NR}; the only difference is that one has to use cut-off functions from $\DD^{(A_p)}(\RR^{2d})$ in the Beurling case and from $\DD^{\{A_p\}}(\RR^{2d})$ in the Roumieu case). It is important to stress that $a$ is of finite order and hence $a^w$ also acts continuously on $\SSS(\RR^d)$ and $\SSS'(\RR^d)$.
\end{remark}

\begin{remark}\label{admfctwit}
Let $1\geq \rho_1>\rho$ and $m\geq 1$, $m\in\RR$. Let $a\in\Gamma^m_{\rho_1}(\RR^{2d})$ be elliptic in $\Gamma^m_{\rho_1}$-sense and positive, i.e. $c_1\langle w\rangle^m\leq a(w)\leq c_2\langle w\rangle^m$ on $\RR^{2d}$. Assume that $a$ satisfies the following condition: for every $h>0$ there exists $C>0$ (resp. there exist $h,C>0$) such that
\beqs
|D^{\alpha}a(w)|\leq Ch^{|\alpha|}A_{\alpha}a(w)\langle w\rangle^{-\rho_1|\alpha|},\,\, w\in \RR^{2d},\, \alpha\in\NN^{2d}.
\eeqs
Let $s\geq 1/(\rho_1-\rho)$ be such that $M_p\subset p!^s$ in the Beurling case and $M_p\prec p!^s$ in the Roumieu case, respectively. Then \cite[Remark 7.6]{PP1} yields that
$$
e^{a(w)^{1/(sm)}} \mbox{ and } e^{-a(w)^{1/(sm)}} \mbox{ are } \Gamma_{A_p,\rho}^{*,\infty}-\mbox{admissible and }
$$
$$e^{a(w)^{1/(sm)}}\mbox{ is } e^{a(w)^{1/(sm)}}-\Gamma_{A_p,\rho}^{*,\infty}-\mbox{elliptic};\,\,\,
e^{-a(w)^{1/(sm)}} \mbox{ is } e^{-a(w)^{1/(sm)}}-\Gamma_{A_p,\rho}^{*,\infty}-\mbox{elliptic}.
$$
In particular, as $|D^{\alpha}\langle w\rangle|\leq 2^{|\alpha|+1}|\alpha|!\langle w\rangle^{1-|\alpha|}$, we conclude that $e^{r\langle w\rangle^{1/s}}$ is $\Gamma_{A_p,\rho}^{*,\infty}$-admissible for each $r\in\RR\backslash\{0\}$ and $s\geq 1/(1-\rho)$ such that $M_p\subset p!^s$ (resp. $M_p\prec p!^s$); furthermore, $e^{r\langle w\rangle^{1/s}}$ is $e^{r\langle w\rangle^{1/s}}-\Gamma_{A_p,\rho}^{*,\infty}$-elliptic.

  Similarly, for such $s$ and $r$, $e^{r|w|^{1/s}}$ is $\Gamma_{A_p,\rho}^{*,\infty}$-admissible. In order to generate an $e^{r|w|^{1/s}}$-elliptic symbol, take $a(w)=r^{2s}|w|^2$ and modify it near the origin to be positive; of course, one has to use cut-off functions from $\DD^{(A_p)}(\RR^{2d})$ in the Beurling case and from $\DD^{\{A_p\}}(\RR^{2d})$ in the Roumieu case, respectively.
\end{remark}

Let us prove some basic properties of $\Gamma_{A_p,\rho}^{*,\infty}$-admissible functions.

\begin{lemma}\label{ktn117951}
If $f$ and $g$ are $\Gamma_{A_p,\rho}^{*,\infty}$-admissible, then so are $1/f$ and $fg$.
\end{lemma}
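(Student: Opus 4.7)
The plan is to handle the two assertions separately, each reducing to the symbolic calculus already developed in the paper.

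For the admissibility of $1/f$, the conditions on ultrapolynomial growth are symmetric in $f$ and $1/f$, so there is nothing to check there. To produce a hypoelliptic symbol comparable with $1/f$ on a complement of a large ball, I would simply invoke Remark \ref{ktv957939}: starting from the $f$-elliptic hypoelliptic symbol $a\in\Gamma^{*,\infty}_{A_p,\rho}(\RR^{2d})$, one obtains a parametrix $q=R(\sum_jq_j)\in \Gamma^{*,\infty}_{A_p,\rho}(\RR^{2d})$ which is itself hypoelliptic and satisfies $c''/|a(w)|\leq|q(w)|\leq C''/|a(w)|$ on some $Q^c_{B''}$; combining this with $cf(w)\leq|a(w)|\leq Cf(w)$ on $Q^c_B$ gives two-sided bounds of $|q|$ by $1/f$ on $Q^c_{\max\{B,B''\}}$, which is exactly what is needed.

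For $fg$, the ultrapolynomial growth of $fg$ and $1/(fg)=(1/f)(1/g)$ follows from $(M.2)$ for $M_p$ applied to the sum of the two ultrapolynomial growth exponents in the bounds on $f,g$ (and $1/f,1/g$). I would then take $a,b\in\Gamma^{*,\infty}_{A_p,\rho}(\RR^{2d})$ which are $f$- and $g$-elliptic respectively, and show that the pointwise product $ab$ is an $fg$-elliptic hypoelliptic symbol. That $ab\in\Gamma^{*,\infty}_{A_p,\rho}(\RR^{2d})$ follows from Leibniz's rule together with $(M.2)$ for $A_p$, namely $A_{\alpha'}A_{\alpha''}\leq c_0H^{|\alpha|}A_{\alpha}$, and with the elementary inequality $e^{M(m_1|w|)}e^{M(m_2|w|)}\leq Ce^{M(m'|w|)}$ obtained from $(M.2)$ for $M_p$. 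The lower and upper bounds $(cc')fg\leq|ab|\leq(CC')fg$ on a large $Q^c_{B_0}$ are immediate from the corresponding bounds for $a$ and $b$, giving both the admissibility comparison and the lower hypoellipticity bound \eqref{dd1}.

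The only step with any content is the hypoellipticity estimate \eqref{dd2} for $ab$. Writing $D^\alpha_\xi D^\beta_x(ab)$ by the Leibniz rule, one bounds each summand using the hypoellipticity of $a$ and of $b$; the product of the two right-hand sides is of the form $h_a^{|\alpha'|+|\beta'|}h_b^{|\alpha''|+|\beta''|}|a||b|A_{\alpha'}A_{\alpha''}A_{\beta'}A_{\beta''}\langle w\rangle^{-\rho(|\alpha|+|\beta|)}$, and absorbing the binomial coefficients $\binom{\alpha}{\alpha'}\binom{\beta}{\beta'}\leq 2^{|\alpha|+|\beta|}$ and applying $(M.2)$ for $A_p$ twice consolidates the factors $A_{\alpha'}A_{\alpha''}$ and $A_{\beta'}A_{\beta''}$ into $A_\alpha A_\beta$ up to a constant times $H^{|\alpha|+|\beta|}$. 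In the Beurling case, since $h_a,h_b$ can be taken arbitrarily small, the resulting constant $h'=4c_0^2H^2\max\{h_a,h_b\}$ can also be taken arbitrarily small; in the Roumieu case a single choice of $h_a,h_b$ suffices. This is the main, although routine, computation; once it is in place, $ab$ is a hypoelliptic $fg$-elliptic element of $\Gamma^{*,\infty}_{A_p,\rho}(\RR^{2d})$ and the lemma follows.
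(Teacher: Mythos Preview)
Your proposal is correct and follows exactly the route taken in the paper: the parametrix $q$ from Remark \ref{ktv957939} witnesses the admissibility of $1/f$, and the pointwise product $ab$ of an $f$-elliptic and a $g$-elliptic symbol witnesses that of $fg$. One small correction: the inequality $A_{\alpha'}A_{\alpha''}\leq A_{\alpha}$ you need in the Leibniz estimate is a consequence of $(M.1)$ (log-convexity together with $A_0=1$), not of $(M.2)$, which goes in the opposite direction; your argument works unchanged with this citation fixed.
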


\begin{proof} If $a$ is $f$-elliptic then its parametrix $q\in\Gamma_{A_p,\rho}^{*,\infty}(\RR^{2d})$ constructed in Remark \ref{ktv957939} is $1/f$-elliptic because of (\ref{ktr991509}) and (\ref{ktl997133}). If $a$ is $f$-elliptic and $b$ is $g$-elliptic then $ab\in\Gamma_{A_p,\rho}^{*,\infty}(\RR^{2d})$ is $fg$-elliptic.
\end{proof}

\begin{lemma}\label{lskvpc135}
If $f$ is $\Gamma_{A_p,\rho}^{*,\infty}$-admissible then so is $f^r$ for any $r\in\RR\backslash\{0\}$.
\end{lemma}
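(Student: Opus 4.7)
The strategy is to first reduce to the case $r>0$. Indeed, Lemma \ref{ktn117951} gives admissibility of $1/f$ whenever $f$ is admissible, and the identity $f^r=(1/f)^{-r}$ then reduces the case $r<0$ to the case of a positive exponent applied to $1/f$. For $r\in\ZZ_+$, admissibility of $f^r$ follows by iterating the product statement in Lemma \ref{ktn117951}. The substantive case is therefore arbitrary non-integer $r>0$.

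For this, let $a\in \Gamma^{*,\infty}_{A_p,\rho}(\RR^{2d})$ be $f$-elliptic. First pass to $|a|^2=a\bar{a}$: this is a real-valued nonnegative hypoelliptic $f^2$-elliptic symbol, since the class is stable under pointwise products (via Leibniz and the submultiplicativity $A_{|\beta|}A_{|\alpha|-|\beta|}\leq A_{|\alpha|}$ coming from $(M.1)$ with $A_0=1$, together with iterated $(M.2)$ for the growth weights). Since the hypoellipticity forces the zero set of $a$ to lie in some $Q_B$, choose a nonnegative cut-off $\chi\in\DD^{*}(\RR^{2d})$ with $\chi\geq 1$ on $Q_{B+1}$ and set $\tilde{a}:=|a|^2+\chi$. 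Then $\tilde{a}$ is a positive real-valued hypoelliptic $f^2$-elliptic symbol bounded below by some constant $c_0>0$ on all of $\RR^{2d}$. The candidate for an $f^r$-elliptic symbol is
$$
b(w):=\tilde{a}(w)^{r/2}.
$$
The bounds $c'f^r\leq b\leq C'f^r$ on $Q^c_{B'}$ and the ultrapolynomial growth of $b$ and $1/b$ are immediate from the corresponding properties of $\tilde{a}$ and iterated applications of $(M.2)$ absorbing the factor $r/2$ inside the weight, i.e.\ $e^{(r/2)M(m|\cdot|)}\leq C_{m,r}e^{M(m'|\cdot|)}$.

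The crux is the derivative estimate
$$
|D^{\gamma}_w b(w)|\leq C h^{|\gamma|} A_{\gamma}\, b(w)\,\langle w\rangle^{-\rho|\gamma|},\quad \gamma\in\NN^{2d},
$$
which simultaneously shows $b\in\Gamma^{*,\infty}_{A_p,\rho}(\RR^{2d})$ and that $b$ is hypoelliptic. I would prove it via the multivariate Faà di Bruno formula applied to $\phi\circ\tilde{a}$ with $\phi(t)=t^{r/2}$ (real-analytic on $[c_0,\infty)$). The ingredients are: the Gamma-function bound $|\phi^{(K)}(t)|\leq K_r\,K!\,t^{r/2-K}$; the hypoellipticity of $\tilde{a}$ applied to every derivative appearing in the expansion; the multinomial estimate $|\nu|!/\nu!\leq(2d)^{|\nu|}$; and, decisively, the sub-multiplicative inequality
$$
\prod_i \bigl(A_{|\nu_i|}/|\nu_i|!\bigr)^{m_i}\leq A_{|\gamma|}/|\gamma|!,
$$
valid whenever $\gamma=\sum_i m_i\nu_i$, which follows from the log-convexity of $A_p/p!$ (condition $(M.4)$) together with $A_0/0!=A_1/1!=1$.

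The main obstacle is precisely this combinatorial bookkeeping. A naive argument that first uses $\prod_B A_{|\gamma_B|}\leq A_{|\gamma|}$ (submultiplicativity of $A_p$ itself) and then tries to absorb the resulting factorial partition count via $p!\leq A_p$ produces only an $A_{|\gamma|}^2$ bound, which falls outside $\Gamma^{*,\infty}_{A_p,\rho}$. The refined estimate is instead obtained by retaining the full multinomial coefficients $\gamma!/(m_i!(\nu_i!)^{m_i})$ of the multivariate Faà di Bruno formula and applying $(M.4)$ in the form of the displayed sub-multiplicative inequality, which distributes the $A$-weights across the decomposition in a way compatible with the factorial structure. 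This is precisely where the condition $(M.4)$ on $A_p$ (imposed at the outset of Section \ref{sub calculus}) is crucially used, and it is the only step where $(M.4)$ plays a role in the proof.
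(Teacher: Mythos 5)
Your overall strategy is the same as the paper's: reduce to a positive power via Lemma \ref{ktn117951}, replace the $f$-elliptic symbol $a$ by the positive $f^2$-elliptic symbol obtained from $|a|^2$ after a modification near the origin, and take its $r/2$-th power. The paper then simply quotes the second estimate of \cite[Remark 7.6]{PP1} (after reducing to $0<r<1$ so that $s=2/r>1$), whereas you attempt a self-contained proof of the derivative bound by the multivariate Fa\`a di Bruno formula; your $(M.4)$-based inequality $\prod_i (A_{|\nu_i|}/|\nu_i|!)^{m_i}\leq A_{|\gamma|}/|\gamma|!$ is correct (log-convexity of $A_p/p!$ with $A_0=1$ gives $A_pA_q/(p!q!)\leq A_{p+q}/(p+q)!$), and with it your bookkeeping does yield an estimate of the form $|D^{\gamma}b(w)|\leq C\Lambda^{|\gamma|}A_{\gamma}\,b(w)\langle w\rangle^{-\rho|\gamma|}$.

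There is, however, a genuine gap in the Beurling half of the statement. In your expansion the outer function $t\mapsto t^{r/2}$ has fixed factorial bounds, but the inner factors $\partial^{\nu_j}\tilde a$ carry the $h$-dependent constant $C=C_h$ of the hypoellipticity estimate for $\tilde a$, and a Fa\`a di Bruno term contains up to $K\leq|\gamma|$ such factors. You therefore end up with a geometric factor of size $(C_h h)^{|\gamma|}$, and since $C_h$ blows up as $h\to0$, shrinking $h$ does not produce the required ``for every $h>0$ there exists $C>0$'' conclusion. Concretely, the term with all first-order factors is $\phi^{(|\gamma|)}(\tilde a)\prod_i(\partial_{w_i}\tilde a)^{\gamma_i}$, bounded only by $|\gamma|!\,B^{|\gamma|}\,\tilde a^{r/2}\langle w\rangle^{-\rho|\gamma|}$ with the fixed constant $B=\sup_w|\nabla\tilde a(w)|\langle w\rangle^{\rho}/\tilde a(w)$, and dominating $|\gamma|!\,(B/h)^{|\gamma|}$ by $C\,A_{\gamma}$ for every $h>0$ requires $p!\prec A_p$ --- a consequence of $(M.1)$ and $(M.3)'$ for $A_p$ that your listed ingredients never invoke. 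So as written your argument proves the Roumieu case but not the Beurling case; it can be repaired by adding this absorption (e.g.\ splitting the factors according to the order of the derivatives and using $p!\prec A_p$ for the low-order ones), or simply by citing \cite[Remark 7.6]{PP1} as the paper does. A minor inaccuracy: $\tilde a=|a|^2+\chi$ is in general \emph{not} bounded below by a positive constant on all of $\RR^{2d}$ (admissible $f$ may tend to $0$, e.g.\ $f(w)=e^{-\langle w\rangle^{1/s}}$); this is harmless because $\phi^{(K)}(t)=\binom{r/2}{K}K!\,t^{r/2-K}$ holds exactly for all $t>0$, but the claim should be dropped or restricted to compact sets.
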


\begin{proof} Because of Lemma \ref{ktn117951}, it is enough to prove the claim for $0<r<1$. Fix such $r$. Let $b\in\Gamma_{A_p,\rho}^{*,\infty}(\RR^{2d})$ be $f$-elliptic. Then $|b|^2$ is $f^2$-elliptic. Modify $|b|^2$ near the origin so to be positive on the whole $\RR^{2d}$ and denote this symbol by $a\in \Gamma_{A_p,\rho}^{*,\infty}(\RR^{2d})$. Then, $a$ is $f^2$-elliptic. For $s>1$, we can apply the second estimate in \cite[Remark 7.6]{PP1} to obtain that for every $h>0$ there exists $C>0$ (resp. there exist $h,C>0$) such that
\beqs
|D^{\alpha}a(w)^{1/s}|\leq C h^{|\alpha|}A_{\alpha}a(w)^{1/s}\langle w\rangle^{-\rho|\alpha|},\, w\in\RR^{2d},\, \alpha\in\NN^{2d}.
\eeqs
Taking $s=2/r>1$, this estimate readily yields the desired conclusion.
\end{proof}

\subsection{Infinite order Shubin-Sobolev spaces and the Fredholm property}\label{fredh}

Let $a$ be $f$-elliptic and $q\in\Gamma_{A_p,\rho}^{*,\infty}(\RR^{2d})$ be the parametrix of $a$ constructed in Remark \ref{ktv957939}. Then $q^wa^w=\mathrm{Id}+T$, where $T$ is $*$-regularising. We now introduce our new class of Shubin-Sobolev spaces.

\begin{definition}\label{spaceswithff}
Let $a$ be $f$-elliptic and $T$ be the $*$-regularising operator that equals $q^wa^w-\mathrm{Id}$, where $q\in\Gamma_{A_p,\rho}^{*,\infty}(\RR^{2d})$ is the parametrix of $a$ defined as above. We define
$$
H^*_{A_p,\rho}(f)=\{u\in\SSS'^*(\RR^d)|\, a^wu\in L^2(\RR^d)\}
$$
endowed with norm $\|u\|_{H^*_{A_p,\rho}(f)}=\|a^wu\|_{L^2}+\|Tu\|_{L^2}$.
\end{definition}

\begin{remark}\label{ktstt1975}
An easy preliminary observation is that one obtains an equivalent norm to the one given in Definition \ref{spaceswithff} if one chooses any other parametrix for $a^w$. To be precise, let $\tilde{q}\in \Gamma_{A_p,\rho}^{*,\infty}(\RR^{2d})$ be a symbol such that $\tilde{T}=\tilde{q}^wa^w-\mathrm{Id}$ is $*$-regularising. Then the norm $\|u\|_{H^*_{A_p,\rho}(f)}$ on $H^*_{A_p,\rho}(f)$ is equivalent to $\|a^wu\|_{L^2}+\|\tilde{T}u\|_{L^2}$. To see this, notice that $\|\tilde{T}u\|_{L^2}\leq \|\tilde{T}q^wa^wu\|_{L^2}+\|\tilde{T}Tu\|_{L^2}$. As $\tilde{T}q^w$ and $\tilde{T}$ are $*$-regularising, hence continuous on $L^2(\RR^d)$, we have $\|\tilde{T}u\|_{L^2}\leq C'(\|a^wu\|_{L^2}+\|Tu\|_{L^2})$. Analogously, $\|Tu\|_{L^2}\leq C''(\|a^wu\|_{L^2}+\|\tilde{T}u\|_{L^2})$.
\end{remark}

\begin{remark}\label{ktkjosk17}
Let $a$, $q$ and $T$ be as in Definition \ref{spaceswithff}. Let $a_1$ be any $f$-elliptic symbol with the corresponding parametrix $q_1^w$ as constructed in Remark \ref{ktv957939} and a $*$-regularising operator $T_1$ such that $q_1^wa_1^w=\mathrm{Id}+T_1$. Then, Theorem \ref{weylq} $(i)$ yields
$$
\{a\# q_1, q_1\# a, a_1\# q, q\# a_1\}\precsim 1\quad \mbox{in}\,\, FS_{A_p,\rho}^{*,\infty}(\RR^{2d};0).
$$
Hence, Theorem \ref{weylq} $(ii)$ proves $a^wq_1^w, q_1^w a^w, a_1^w q^w, q^w a_1^w\in\mathcal{L}(L^2(\RR^d),L^2(\RR^d))$ (cf. \cite[Lemma 5.3]{PP1}). More generally, let $b_j\in\Gamma_{A_p,\rho}^{*,\infty}(\RR^{2d})$, $j=1,\ldots,n$, satisfy the following: for every $h>0$ there exists $C>0$ (resp. there exist $h,C>0$) such that
\beqs
|D^{\alpha}b_j(w)|\leq Ch^{|\alpha|}A_{\alpha}g_j(w)\langle w\rangle^{-\rho|\alpha|},\,\, w\in\RR^{2d},\, \alpha\in\NN^{2d}, j=1,\ldots,n,
\eeqs
where $g_j$ are positive continuous functions on $\RR^{2d}$ of ultrapolynomial growth of class $*$ (not necessarily $\Gamma_{A_p,\rho}^{*,\infty}$-admissible). Theorem \ref{weylq} $(i)$ gives
$$
\{b_{j_1}\#\ldots\#b_{j_l}\}\precsim \prod_{k=1}^l g_{j_k}\quad \mbox{in}\,\, FS_{A_p,\rho}^{*,\infty}(\RR^{2d};0),\,\, \mbox{for}\,\, \{j_1,\ldots,j_l\}\subseteq\{1,\ldots,n\}.
$$
Thus, repeated application of
Theorem \ref{weylq} $(ii)$ shows the existence of $b\in\Gamma_{A_p,\rho}^{*,\infty}(\RR^{2d})$ such that $b_1^w\circ\ldots \circ b_n^w-b^w$ is $*$-regularising and $b\precsim \prod_{j=1}^n g_j$; i.e.: for every $h>0$ there exists $C>0$ (resp. there exist $h,C>0$) such that
\beqs
|D^{\alpha}b(w)|\leq Ch^{|\alpha|}A_{\alpha}g_1(w)\cdot\ldots\cdot g_n(w)\langle w\rangle^{-\rho|\alpha|},\,\, w\in\RR^{2d},\, \alpha\in\NN^{2d}.
\eeqs
\end{remark}
\bigskip

Employing Remarks \ref{ktstt1975} and \ref{ktkjosk17}, one can prove the following properties of $H^*_{A_p,\rho}(f)$ in the same manner as in the distributional setting. We simply outline the arguments.
\begin{itemize}
\item[$(i)$] Let $a_1$ be any other $f$-elliptic symbol with $q_1\in\Gamma_{A_p,\rho}^{*,\infty}(\RR^{2d})$ being its parametrix constructed in Remark \ref{ktv957939} and $T_1=q_1^wa_1^w-\mathrm{Id}$ be the corresponding $*$-regularising operator. Then the space $H^*_{A_p,\rho}(f)$ defined by $a_1$ is the same as the one defined by $a$ and the norm $\|a^w_1u\|_{L^2}+\|T_1u\|_{L^2}$ is equivalent to the one defined by $a^w$ and $T$. The proof relies on Remark \ref{ktkjosk17} and is the same as that of \cite[Proposition 1.5.3 (a), p. 42]{NR}. Employing Remark \ref{ktstt1975}, we conclude that the space $H^*_{A_p,\rho}(f)$ and the topology induced on it by the norm $\|\cdot\|_{H^*_{A_p,\rho}(f)}$ do not depend on the particular choice of $a$, its parametrix $q$ and the resulting $*$-regularising operator $T=q^wa^w-\mathrm{Id}$.\\
    \indent More generally, let $b$ be an $f$-elliptic symbol and $\tau\in\RR$. Let $\tilde{q}_1\in\Gamma_{A_p,\rho}^{*,\infty}(\RR^{2d})$ be the symbol of any left $\tau$-parametrix of $\Op_{\tau}(b)$, i.e. $T_1=\Op_{\tau}(\tilde{q}_1)\Op_{\tau}(b)-\mathrm{Id}$ is $*$-regularising; because of Remark \ref{kth995559} we can use $\Op_{\tau}(\tilde{q}_1)$ as a right parametrix of $\Op_{\tau}(b)$ as well. For the moment, set $\tilde{H}=\{u\in\SSS'^*(\RR^d)|\, \Op_{\tau}(b)u\in L^2(\RR^d)\}$ with norm $\|u\|_{\tilde{H}}=\|\Op_{\tau}(b)u\|_{L^2}+\|T_1u\|_{L^2}$. Then $\tilde{H}$ is topologically isomorphic to $H^*_{A_p,\rho}(f)$. To verify this, take the Weyl symbol $a\in\Gamma_{A_p,\rho}^{*,\infty}(\RR^{2d})$ constructed out of $b$ as in \cite[Proposition 3.5]{PP1}. Then $T_2=\Op_{\tau}(b)-a^w$ is $*$-regularising and $a$ is $f$-elliptic. For this $a$ let $q\in\Gamma_{A_p,\rho}^{*,\infty}(\RR^{2d})$ be the parametrix as constructed in Remark \ref{ktv957939} and denote $T=q^wa^w-\mathrm{Id}\in\mathcal{L}(\SSS'^*(\RR^d),\SSS^*(\RR^d))$; because of Remark \ref{kts951307}, we can use $q^w$ as a right parametrix of $a^w$ as well. Define the space $H^*_{A_p,\rho}(f)$ with $a^w$ and the norm on it by $\|u\|_{H^*_{A_p,\rho}(f)}=\|a^w u\|_{L^2}+\|Tu\|_{L^2}$. Since $T_2=\Op_{\tau}(b)-a^w$, $\tilde{H}$ and $H^*_{A_p,\rho}(f)$ are algebraically isomorphic. To prove that the isomorphism is also topological, notice that $T_2=\Op_{\tau}(b)-a^w$ implies that $\Op_{\tau}(\tilde{q}_1)-q^w$ is $*$-regularising. Now, by similar technique as in the proof of \cite[Proposition 1.5.3 (a), p. 42]{NR} we can conclude that the norms $\|u\|_{H^*_{A_p,\rho}(f)}$ and $\|u\|_{\tilde{H}}$ are equivalent. Thus, $H^*_{A_p,\rho}(f)$ does not depend on the quantisation as either.
\item[$(ii)$] The space $H^*_{A_p,\rho}(f)$ becomes a Hilbert space with the inner product $$(u,v)_{H^*_{A_p,\rho}(f)}=(a^wu,a^wv)_{L^2(\RR^d)}+(Tu,Tv)_{L^2(\RR^d)}$$
which induces an equivalent norm to the one introduced in Definition \ref{spaceswithff}. The proof is the same as that of \cite[Proposition 1.5.3 (b), p. 42]{NR}.
\item[$(iii)$] The space $\SSS^*(\RR^d)$ is continuously and densely injected into $H^*_{A_p,\rho}(f)$ and $H^*_{A_p,\rho}(f)$ is continuously and densely injected into $\SSS'^*(\RR^d)$. The proof is the same as the one of \cite[Proposition 1.5.4, p. 43]{NR}.
\item[$(iv)$] Let $f$ and $g$ be $\Gamma_{A_p,\rho}^{*,\infty}$-admissible and $b\in\Gamma_{A_p,\rho}^{*,\infty}(\RR^{2d})$ be such that for every $h>0$ there exists $C>0$ (resp. there exist $h,C>0$) such that
    \beqs
    |D^{\alpha}b(w)|\leq Ch^{|\alpha|}A_{\alpha}g(w)\langle w\rangle^{-\rho|\alpha|},\,\, w\in\RR^{2d},\, \alpha\in\NN^{2d}.
    \eeqs
    Then, $b^w$ defines a continuous operator from $H^*_{A_p,\rho}(f)$ into $H^*_{A_p,\rho}(f/g)$ ($f/g$ is $\Gamma_{A_p,\rho}^{*,\infty}$-admissible because of Lemma \ref{ktn117951}). If $g_1$ is $\Gamma_{A_p,\rho}^{*,\infty}$-admissible and $g(w)g_1(w)/f(w)\rightarrow0$ as $|w|\rightarrow\infty$, then $b^w$ defines a compact operator from $H^*_{A_p,\rho}(f)$ into $H^*_{A_p,\rho}(g_1)$. The proof relies on Remark \ref{ktkjosk17} and is the same as the proof of \cite[Proposition 1.5.5, p. 43]{NR}. For the compactness, one additionally has to use a result on compactness on $L^2(\RR^d)$, for example \cite[Section 24.4, p. 192]{Shubin}, or to derive one by similar technique as in \cite[Remark 8.7]{PPV-BMS}; cf. Remark \ref{ktv957939}. In particular, if $f(w)\geq cg(w)$, $\forall w\in\RR^{2d}$, then $H^*_{A_p,\rho}(f)$ is continuously injected into $H^*_{A_p,\rho}(g)$ and, if $g(w)/f(w)\rightarrow0$ as $|w|\rightarrow \infty$, then the inclusion $H^*_{A_p,\rho}(f)\rightarrow H^*_{A_p,\rho}(g)$ is compact. To prove this just take $b(w)=1$, $w\in\RR^{2d}$.
\item[$(v)$] (Global regularity and a priori estimates) Let $f,g,g_1$ be $\Gamma_{A_p,\rho}^{*,\infty}$-admissible and $b\in\Gamma_{A_p,\rho}^{*,\infty}(\RR^{2d})$ hypoelliptic. Assume there exist $c,B>0$ such that $cg(w)\leq|b(w)|$, $\forall w\in Q^c_B$. If $u\in\SSS'^*(\RR^d)$ and $b^wu\in H^*_{A_p,\rho}(f/g)$, then $u\in H^*_{A_p,\rho}(f)$. Furthermore, there exists $C_1>0$ such that
    \beqs
    \|\varphi\|_{H^*_{A_p,\rho}(f)}\leq C_1(\|b^w\varphi\|_{H^*_{A_p,\rho}(f/g)}+\|\varphi\|_{H^*_{A_p,\rho}(g_1)}),\,\, \forall\varphi\in\SSS^*(\RR^d).
    \eeqs
    The proof is the same as that of \cite[Proposition 1.5.8, p. 44]{NR}; use the parametrix for $b$ as constructed in Remark \ref{ktv957939}.
\item[$(vi)$] (Duality) Let $f$ be $\Gamma_{A_p,\rho}^{*,\infty}$-admissible and set $\tilde{f}(x,\xi) = f(x,-\xi)$, $(x,\xi)\in\mathbb R^{2d}$. Clearly, $\tilde{f}$ is $\Gamma_{A_p,\rho}^{*,\infty}$-admissible (if $a$ is $f$-elliptic then $a(x,-\xi)$ is $\tilde{f}$-elliptic). The bilinear mapping
    \beqs
    \langle \varphi,\psi\rangle=\int_{\RR^d}\varphi(x)\psi(x)dx,\,\, \SSS^*(\RR^d)\times\SSS^*(\RR^d)\rightarrow\CC,
    \eeqs
    extends to a continuous bilinear mapping $H^*_{A_p,\rho}(f)\times H^*_{A_p,\rho}(1/\tilde{f})\rightarrow\CC$. The strong dual of $H^*_{A_p,\rho}(f)$ is isomorphic to $H^*_{A_p,\rho}(1/\tilde{f})$ and the duality is given by this bilinear mapping. The proof is the same as that of \cite[Proposition 1.5.9, p. 44]{NR}. Clearly, this duality is compatible with $\langle \SSS^*(\RR^d),\SSS'^*(\RR^d)\rangle$. In particular, if $f(x,-\xi)=f(x,\xi)$, then the strong dual of $H^*_{A_p,\rho}(f)$ is isomorphic to $H^*_{A_p,\rho}(1/f)$.
\item[$(vii)$] If $f$ is temperate and slowly varying, then it is $\Gamma_{A_p,\rho}^{*,\infty}$-admissible by Remark \ref{ktsvv1573}. The classical (distributional) Shubin-Sobolev space defined by $f$ is isomorphic with $H^*_{A_p,\rho}(f)$ of Definition \ref{spaceswithff} (cf. Remark \ref{ktsvv1573}). In particular, if $g$ is $\Gamma_{A_p,\rho}^{*,\infty}$-admissible and for every $s>0$ there exists $C_s>0$ such that $\langle w\rangle^s\leq C_sg(w)$, $\forall w\in\RR^{2d}$, then $(iv)$ implies $H^*_{A_p,\rho}(g)\hookrightarrow\SSS(\RR^d)$. Similarly, if for every $s>0$ there exists $C_s>0$ such that $\langle w\rangle^{-s}\geq C_sg(w)$, then $\SSS'(\RR^d)\hookrightarrow H^*_{A_p,\rho}(g)$.
\item[$(viii)$] Let $f$ be $\Gamma_{A_p,\rho}^{*,\infty}$-admissible and $a\in\Gamma_{A_p,\rho}^{*,\infty}(\RR^{2d})$ be $f-\Gamma_{A_p,\rho}^{*,\infty}$-elliptic. Let $\tilde{M}_p$ satisfies $(M.1)$, $(M.2)$, $(M.3)$, $\tilde{M}_0=1$ and $M_p\subset \tilde{M}_p$; clearly, $\SSS^{(M_p)}(\RR^d)\hookrightarrow \SSS^{(\tilde{M}_p)}(\RR^d)$ and $\SSS^{\{M_p\}}(\RR^d)\hookrightarrow \SSS^{\{\tilde{M}_p\}}(\RR^d)$. Since $A_p\subset \tilde{M}_p^{\rho}$, we can also consider the symbol classes $\Gamma_{A_p,\rho}^{(\tilde{M}_p),\infty}(\RR^{2d})$ and $\Gamma_{A_p,\rho}^{\{\tilde{M}_p\},\infty}(\RR^{2d})$; clearly, $\Gamma_{A_p,\rho}^{(\tilde{M}_p),\infty}(\RR^{2d})$ and $\Gamma_{A_p,\rho}^{\{\tilde{M}_p\},\infty}(\RR^{2d})$ are continuously injected into the spaces $\Gamma_{A_p,\rho}^{(M_p),\infty}(\RR^{2d})$ and $\Gamma_{A_p,\rho}^{\{M_p\},\infty}(\RR^{2d})$ respectively. If $f$ and $1/f$ are of ultrapolynomial growth of class $(\tilde{M}_p)$ (resp. of class $\{\tilde{M}_p\}$), then $a\in \Gamma_{A_p,\rho}^{(\tilde{M}_p),\infty}(\RR^{2d})$ (resp. $a\in \Gamma_{A_p,\rho}^{\{\tilde{M}_p\},\infty}(\RR^{2d})$) and, in fact, $f$ is $\Gamma_{A_p,\rho}^{(\tilde{M}_p),\infty}$-admissible and $a$ is $f-\Gamma_{A_p,\rho}^{(\tilde{M}_p),\infty}$-elliptic (resp. $f$ is $\Gamma_{A_p,\rho}^{\{\tilde{M}_p\},\infty}$-admissible and $a$ is $f-\Gamma_{A_p,\rho}^{\{\tilde{M}_p\},\infty}$-elliptic). Similarly, the parametrix $q$ constructed in Remark \ref{ktv957939} is additionally $1/f-\Gamma_{A_p,\rho}^{(\tilde{M}_p),\infty}$-elliptic (resp. $1/f-\Gamma_{A_p,\rho}^{\{\tilde{M}_p\},\infty}$-elliptic). It is easy to see that $H^{(M_p)}_{A_p,\rho}(f)$ is topologically isomorphic to $H^{(\tilde{M}_p)}_{A_p,\rho}(f)$ (resp. $H^{\{M_p\}}_{A_p,\rho}(f)$ is topologically isomorphic to $H^{\{\tilde{M}_p\}}_{A_p,\rho}(f)$). In particular, $\SSS^{(\tilde{M}_p)}(\RR^d)\hookrightarrow H^{(M_p)}_{A_p,\rho}(f)\hookrightarrow\SSS'^{(\tilde{M}_p)}(\RR^d)$ (resp. $\SSS^{\{\tilde{M}_p\}}(\RR^d)\hookrightarrow H^{\{M_p\}}_{A_p,\rho}(f)\hookrightarrow\SSS'^{\{\tilde{M}_p\}}(\RR^d)$).\\
    \indent If $M_p\prec \tilde{M}_p$, then $\SSS^{\{M_p\}}(\RR^d)\hookrightarrow \SSS^{(\tilde{M}_p)}(\RR^d)$. Assume that $f$ is $\Gamma_{A_p,\rho}^{(M_p),\infty}$-admissible and $a\in\Gamma_{A_p,\rho}^{(M_p),\infty}(\RR^{2d})$ be $f-\Gamma_{A_p,\rho}^{(M_p),\infty}$-elliptic. If $f$ and $1/f$ are of ultrapolynomial growth of class $(\tilde{M}_p)$, then $a\in \Gamma_{A_p,\rho}^{\{M_p\},\infty}(\RR^{2d})\cap \Gamma_{A_p,\rho}^{(\tilde{M}_p),\infty}(\RR^{2d})$ and, in fact, $f$ is both $\Gamma_{A_p,\rho}^{\{M_p\},\infty}$-admissible and $\Gamma_{A_p,\rho}^{(\tilde{M}_p),\infty}$-admissible with $a$ being both $f-\Gamma_{A_p,\rho}^{\{M_p\},\infty}$-elliptic and $f-\Gamma_{A_p,\rho}^{(\tilde{M}_p),\infty}$-elliptic. Similarly, the parametrix $q$ constructed in Remark \ref{ktv957939} (in $\Gamma_{A_p,\rho}^{(M_p),\infty}$ sense) is both $1/f-\Gamma_{A_p,\rho}^{\{M_p\},\infty}$-elliptic and $1/f-\Gamma_{A_p,\rho}^{(\tilde{M}_p),\infty}$-elliptic. In this case, one can easily verify that $H^{\{M_p\}}_{A_p,\rho}(f)$, $H^{(\tilde{M}_p)}_{A_p,\rho}(f)$ and $H^{(M_p)}_{A_p,\rho}(f)$ are topologically isomorphic among each other. Hence, $\SSS^{(\tilde{M}_p)}(\RR^d)\hookrightarrow H^{\{M_p\}}_{A_p,\rho}(f)\hookrightarrow\SSS'^{(\tilde{M}_p)}(\RR^d)$.
\item[$(ix)$] Let $f$ be $\Gamma_{A_p,\rho}^{(M_p),\infty}$-admissible and $a\in\Gamma_{A_p,\rho}^{(M_p),\infty}(\RR^{2d})$ an $f-\Gamma_{A_p,\rho}^{(M_p),\infty}$-elliptic symbol. If $f$ and $1/f$ are of ultrapolynomial growth of class $\{M_p\}$, then $a\in \Gamma_{A_p,\rho}^{\{M_p\},\infty}(\RR^{2d})\cap \Gamma_{A_p,\rho}^{(M_p),\infty}(\RR^{2d})$ and, in fact, $f$ is $\Gamma_{A_p,\rho}^{\{M_p\},\infty}$-admissible with $a$ being $f-\Gamma_{A_p,\rho}^{\{M_p\},\infty}$-elliptic. Similarly, the parametrix $q$ constructed in Remark \ref{ktv957939} (in $\Gamma_{A_p,\rho}^{(M_p),\infty}$ sense) is additionally $1/f-\Gamma_{A_p,\rho}^{\{M_p\},\infty}$-elliptic. In this case, it is easy to show that $H^{(M_p)}_{A_p,\rho}(f)$ is topologically isomorphic to $H^{\{M_p\}}_{A_p,\rho}(f)$ and, in particular, $\SSS^{\{M_p\}}(\RR^d)\hookrightarrow H^{(M_p)}_{A_p,\rho}(f)\hookrightarrow\SSS'^{\{M_p\}}(\RR^d)$.
\item[$(x)$] (The Fredholm property) Let $f$ and $g$ be $\Gamma_{A_p,\rho}^{*,\infty}$-admissible and $a$ be $g$-elliptic. Then $a^w_{|H^*_{A_p,\rho}(f)}:H^*_{A_p,\rho}(f)\rightarrow H^*_{A_p,\rho}(f/g)$ is a Fredholm operator. Moreover
    \beqs
    \mathrm{ind}\, a^w_{|H^*_{A_p,\rho}(f)}=\mathrm{dim}\,\mathrm{Ker}\, a^w-\mathrm{dim}\,\mathrm{Ker}\, (a^w)^*=\mathrm{dim}\,\mathrm{Ker}\, a^w-\mathrm{dim}\,\mathrm{Ker}\, {}^t(a^w),
    \eeqs
    where $(a^w)^*$ is the formal adjoint (which is in fact equal to $\bar{a}^w$) and ${}^t(a^w)$ is the transpose (which is in fact equal to $\Op_{1/2}(a(x,-\xi))$). The proof is the same as that of \cite[Theorem 1.6.9, p. 51]{NR}; cf. Remark \ref{kts951307}. In particular, the index of $a^w$ does not depend on the particular choice of $f$. If $g_1$ is $\Gamma_{A_p,\rho}^{*,\infty}$-admissible such that $g_1/g$ tends to zero at infinity and $b\in\Gamma_{A_p,\rho}^{*,\infty}(\RR^{2d})$ satisfies the following estimate: for every $h>0$ there exists $C>0$ (resp. there exist $h,C>0$) such that
    \beqs
    |D^{\alpha}b(w)|\leq Ch^{|\alpha|}A_{\alpha}g_1(w)\langle w\rangle^{-\rho|\alpha|},\,\, w\in\RR^{2d},\, \alpha\in\NN^{2d},
    \eeqs
    then $(iv)$ proves that $b^w_{|H^*_{A_p,\rho}(f)}:H^*_{A_p,\rho}(f)\rightarrow H^*_{A_p,\rho}(f/g)$ is compact; hence,
    $$
    a^w_{|H^*_{A_p,\rho}(f)}+b^w_{|H^*_{A_p,\rho}(f)}:H^*_{A_p,\rho}(f)\rightarrow H^*_{A_p,\rho}(f/g)
    $$
    is Fredholm and $\mathrm{ind}\, (a^w_{|H^*_{A_p,\rho}(f)}+b^w_{|H^*_{A_p,\rho}(f)})=\mathrm{ind}\, a^w_{|H^*_{A_p,\rho}(f)}$.
\end{itemize}

We supplement $(vii)$ with the following result on ``ultra-regularity''.

\begin{proposition}\label{ktr991101}
Let $s>1/(1-\rho)$ be such that $M_p\subset p!^s$ in the Beurling case and $M_p\prec p!^s$ in the Roumieu case, respectively, and assume that $\tilde{M}_p$, $p\in\NN$, satisfies $(M.1)$, $(M.2)$, $(M.3)$, $\tilde{M}_0=1$, $p!^s\subset\tilde{M}_p$ and $\tilde{M}_p/p!^s$, $p\in\NN$, is monotonically increasing.
\begin{itemize}
\item[$(a)$] Let $g$ be $\Gamma_{A_p,\rho}^{*,\infty}$-admissible and suppose there are $l,c>0$ such that
    \beq\label{ktr771305}
    g(w)\geq ce^{\tilde{M}(l|w|)},\,\, \forall w\in\RR^{2d}.
    \eeq
    Then $H^*_{A_p,\rho}(g)$ is continuously injected into $\SSS^{\{\tilde{M}_p\}}(\RR^d)$.
\item[$(b)$] If $g$ is $\Gamma_{A_p,\rho}^{*,\infty}$-admissible and satisfies \eqref{ktr771305} for every $l>0$ and a suitable $c=c(l)>0$, then $H^*_{A_p,\rho}(g)$ is continuously injected into $\SSS^{(\tilde{M}_p)}(\RR^d)$.
\end{itemize}
\end{proposition}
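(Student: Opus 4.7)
The plan is to reduce the problem to an $L^2\to\SSS^{*}$ mapping property of the parametrix of any $g$-elliptic symbol, and then to upgrade the target space to $\SSS^{\{\tilde M_p\}}(\RR^d)$ or $\SSS^{(\tilde M_p)}(\RR^d)$ by carefully tracking the exponential decay of the parametrix symbol against the weight $\tilde M_p$. Since $g$ is $\Gamma^{*,\infty}_{A_p,\rho}$-admissible, I would fix a $g$-elliptic $a\in\Gamma^{*,\infty}_{A_p,\rho}(\RR^{2d})$ together with the parametrix $q\in\Gamma^{*,\infty}_{A_p,\rho}(\RR^{2d})$ from Remark \ref{ktv957939}. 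By Lemma \ref{ktn117951}, $q$ is $1/g$-elliptic, so combining \eqref{ktr991509} and \eqref{ktl997133} with the hypothesis on $g$ yields, for every (resp.\ some) $h>0$,
\[
|D^\gamma q(w)|\leq Ch^{|\gamma|}A_\gamma e^{-\tilde M(l|w|)}\langle w\rangle^{-\rho|\gamma|},\quad w\in Q^c_{B'},\ \gamma\in\NN^{2d}.
\]
For $u\in H^*_{A_p,\rho}(g)$, setting $v:=a^wu\in L^2(\RR^d)$ gives $u=q^wv-Tu$ with $T=q^wa^w-\mathrm{Id}$ a $*$-regularising operator. Since $Tu\in\SSS^*(\RR^d)$ injects continuously into both $\SSS^{(\tilde M_p)}(\RR^d)$ and $\SSS^{\{\tilde M_p\}}(\RR^d)$ (using $M_p\subset p!^s\subset\tilde M_p$ in the Beurling case and $M_p\prec p!^s\subset\tilde M_p$ in the Roumieu case), the problem reduces to showing the appropriate continuous mapping property for $q^w$ on $L^2(\RR^d)$.

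The core step is to estimate $\|x^\alpha D^\beta q^wv\|_{L^2}$ uniformly in $\alpha,\beta\in\NN^d$. By Lemma \ref{shproduct} applied to $a=x^\alpha$ and $b=\xi^\beta$, the Weyl symbol $p_{\alpha,\beta}:=x^\alpha\#\xi^\beta$ of $x^\alpha D^\beta$ is a polynomial of degree $\leq|\alpha|+|\beta|$ with combinatorial bounds on its coefficients, and by Theorem \ref{weylq} there exists $r_{\alpha,\beta}\in\Gamma^{*,\infty}_{A_p,\rho}(\RR^{2d})$ realising $p_{\alpha,\beta}\# q$ such that $x^\alpha D^\beta q^w-\Op_{1/2}(r_{\alpha,\beta})$ is $*$-regularising. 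Since $p_{\alpha,\beta}$ is polynomial, the sharp expansion $p_{\alpha,\beta}\# q$ truncates at $j\leq|\alpha|+|\beta|$; combining the truncation with the decay estimate for $q$ and the elementary inequality
\[
\langle w\rangle^N e^{-\tilde M(l|w|)}\leq \tilde M_N/l^N,\quad N\in\NN,
\]
(a direct consequence of the definition of the associated function), one obtains, for all $|\gamma|\leq N_d$ with $N_d$ depending only on $d$,
\[
\|D^\gamma r_{\alpha,\beta}\|_{L^\infty(\RR^{2d})}\leq CK^{|\alpha|+|\beta|}\tilde M_{|\alpha|+|\beta|}l^{-(|\alpha|+|\beta|)}.
\]
A Calder\'on--Vaillancourt type $L^2$-boundedness (implicit in the calculus of \cite{PP1} and in property $(iv)$ above), combined with $(M.2)$ on $\tilde M_p$ to split $\tilde M_{|\alpha|+|\beta|}$ as a geometric multiple of $\tilde M_\alpha\tilde M_\beta$, then yields
\[
\|x^\alpha D^\beta q^wv\|_{L^2}\leq C'K'^{|\alpha|+|\beta|}\tilde M_\alpha\tilde M_\beta\, l^{-(|\alpha|+|\beta|)}\|v\|_{L^2}.
\]

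A standard Sobolev embedding upgrades these $L^2$ bounds to $L^\infty$ bounds (with a finite derivative loss absorbed in the constants), and the usual characterisation of $\SSS^{\{\tilde M_p\}}(\RR^d)$ (resp. $\SSS^{(\tilde M_p)}(\RR^d)$) as the space of $\varphi$ for which $\sup_{\alpha,\beta}m^{|\alpha|+|\beta|}\|x^\alpha D^\beta\varphi\|_{L^\infty}/(\tilde M_\alpha\tilde M_\beta)<\infty$ holds for some (resp.\ every) $m>0$ completes the argument: part (a) follows from the estimate at the fixed $l$, while part (b) follows because those estimates hold for every $l>0$ (with constants depending on $l$).

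The main obstacle will be the careful bookkeeping of the ``lower order'' terms $j\geq 1$ in the sharp expansion $p_{\alpha,\beta}\# q$. Each such term is a sum of products of derivatives of the polynomial $p_{\alpha,\beta}$ (which lower its degree) with derivatives of $q$ (each costing an $A_j$-factor). To arrive at the uniform bound $\tilde M_{|\alpha|+|\beta|}l^{-(|\alpha|+|\beta|)}$, one has to balance these two competing effects through the subordination chain $A_p\subset M_p\subset p!^s\subset\tilde M_p$ and condition $(M.2)$, and crucially invoke the hypothesis that $\tilde M_p/p!^s$ is monotonically increasing so that factorial-type combinatorial losses produced by the sharp product do not dominate the final estimate.
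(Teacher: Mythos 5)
Your route is genuinely different from the paper's: the paper never touches the parametrix of a $g$-elliptic symbol directly, but instead builds the weight $\widehat{M}_n=l'^n\tilde{M}_{2n}$, uses the operator $P(H)=\sum_n H^n/\widehat{M}_n$ of Section \ref{powerseriesofop} (Theorem \ref{serpolsym}, Corollaries \ref{harmosers} and \ref{cor easy case eigenvalues}), reads off decay of Hermite coefficients of $u$ from $P(H)u\in L^2$, concludes membership via the eigenfunction-expansion characterisation of \cite{djvin}, and gets continuity from the Pt\'ak closed graph theorem. Your plan (reduce to $q^w:L^2\to\SSS^{\{\tilde M_p\}}$ and estimate $\|x^\alpha D^\beta q^w v\|$ by symbolic calculus plus Calder\'on--Vaillancourt plus the $x^\alpha D^\beta$-seminorm characterisation of $\SSS^{\{\tilde M_p\}}$, $\SSS^{(\tilde M_p)}$) is plausible in principle, but as written it has two genuine gaps.

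First, the appeal to Theorem \ref{weylq} to write $x^\alpha D^\beta q^w=\Op_{1/2}(r_{\alpha,\beta})+T_{\alpha,\beta}$ with $T_{\alpha,\beta}$ $*$-regularising gives you nothing uniform in $(\alpha,\beta)$: the family $\{p_{\alpha,\beta}\}$ is not subordinated to any single function of ultrapolynomial growth (at a fixed point with a coordinate of modulus $>1$ the sup over $\alpha,\beta$ is infinite), so the equicontinuity part of Theorem \ref{weylq} $(ii)$ cannot be applied to the whole family, and applying it pair by pair leaves the $L^2$-operator norms of the remainders $T_{\alpha,\beta}$ completely unquantified, whereas your final inequality needs them bounded by $CK^{|\alpha|+|\beta|}\tilde M_\alpha\tilde M_\beta$. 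The natural repair is to avoid remainders altogether: since $x^\alpha D^\beta$ is a differential operator with polynomial coefficients, its composition with $q^w$ is given \emph{exactly} by the finite Moyal sum $\sum_j(p_{\alpha,\beta}\# q)_j$ (the one-polynomial analogue of the fact invoked in Lemma \ref{ktn557713}); this has to be stated and used, it is not what you cited. Second, the pivotal uniform bound $\|D^\gamma r_{\alpha,\beta}\|_{L^\infty}\leq CK^{|\alpha|+|\beta|}\tilde M_{|\alpha|+|\beta|}l^{-(|\alpha|+|\beta|)}$ is asserted, not proved, and it is where essentially all the work lies: the Weyl symbol $x^\alpha\#\xi^\beta$ has coefficients of size $\binom{\alpha}{\nu}\binom{\beta}{\nu}\nu!2^{-|\nu|}$, differentiating it in the sharp product produces factors up to $(|\alpha|+|\beta|)^{j}$, each derivative of $q$ costs $A_j$, and the degree drop that compensates these losses simultaneously weakens the power of $l^{-1}$ (an issue precisely for part $(b)$, where $K$ must be independent of $l$). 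Balancing all of this requires a computation of the same nature as the proof of Theorem \ref{serpolsym}, using $s>1$, $p!^s\subset\tilde M_p$ and the monotonicity of $\tilde M_p/p!^s$; you correctly identify it as the main obstacle, but deferring it means the proof is not yet there. Finally, both the Calder\'on--Vaillancourt theorem and the equivalence of the paper's definition of $\SSS^{\{\tilde M_p\}}$, $\SSS^{(\tilde M_p)}$ with the $\sup_{\alpha,\beta}m^{|\alpha|+|\beta|}\|x^\alpha D^\beta\cdot\|_{L^\infty}/(\tilde M_\alpha\tilde M_\beta)$ seminorms are external to the paper and must be quoted with the precise hypotheses on $\tilde M_p$ under which they hold.
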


\begin{proof} Let $a(w)=|w|^2$. Hence, $H=a^w$ is the harmonic oscillator. Let $1\leq \mu_0\leq \mu_1\leq \mu_2\leq \ldots$ be the eigenvalues of $H$ arranged in an ascending order with multiplicities taken into account and let $\psi_j$, $j\in\NN$, be the corresponding Hermite function. We treat $(a)$ and $(b)$ simultaneously. For $(b)$, let $l'>0$ be arbitrary but fixed and, for $(a)$, let $l'\geq 4/l^2$, where $l$ is the constant in (\ref{ktr771305}). Put $\widehat{M}_n=l'^n\tilde{M}_{2n}$, $n\in\NN$. One easily verifies that $\widehat{M}_n$ satisfies the conditions in Theorem \ref{serpolsym} with the given $s>1/(1-\rho)$ and $m=2$ ((\ref{stk997733}) holds true because $\tilde{M}_p/p!^s$ is monotonically increasing). Let $P:\RR\rightarrow \RR$ be given by $P(\lambda)=1+\sum_{n=1}^{\infty}\lambda^n/\widehat{M}_n$. Because of Theorem \ref{serpolsym} and Corollary \ref{harmosers}, the function $f(w)=P(|w|^2)$ is $\Gamma_{A_p,\rho}^{*,\infty}$-admissible with $b=\sum_{n=0}^{\infty}a^{(\# n)}/\widehat{M}_n\in \Gamma_{A_p,\rho}^{*,\infty}(\RR^d)$ being $f$-elliptic; furthermore, the series $b^w=\sum_{n=0}^{\infty}H^n/\widehat{M}_n$ absolutely converges in $\mathcal{L}_b(\SSS^*(\RR^d),\SSS^*(\RR^d))$ and in $\mathcal{L}_b(\SSS'^*(\RR^d),\SSS'^*(\RR^d))$. Corollary \ref{cor easy case eigenvalues} implies that the spectrum of the closure of $b^w$ in $L^2(\RR^d)$ is given by its eigenvalues $\{P(\mu_j)|\, j\in\NN\}$ with multiplicities taken into account. Because of $(vii)$, $H^*_{A_p,\rho}(g)$ is continuously injected into $\SSS(\RR^d)$. One easily verifies that there exists $C'>0$ such that $g(w)\geq C'f(w)$, $\forall w\in\RR^{2d}$, and thus $(iv)$ implies that $b^w$ maps $H^*_{A_p,\rho}(g)$ continuously into $L^2(\RR^d)$. Fix $u\in H^*_{A_p,\rho}(g)$ and denote $c_j=(u,\psi_j)$, $j\in\NN$. Then $u=\sum_{j=0}^{\infty} c_j\psi_j$ and the series absolutely converges in $\SSS(\RR^d)$ and hence in $\SSS'^*(\RR^d)$ as well. Moreover, $L^2(\RR^d)\ni b^wu=\sum_{j=0}^{\infty}c_jP(\mu_j)\psi_j$, and consequently $\sum_{j=0}^{\infty}|c_j|^2(P(\mu_j))^2<\infty$. By Theorem \ref{thExWeyl}, $P(\mu_{j})=P(2(jd!)^{1/d}(1+o(1)))$; thus, there is a constant $c'>0$ such that $P(\mu_j)\geq c'P(j^{1/{d}})$, $j\in\NN$. Hence, $\sum_{j=0}^{\infty}|c_j|^2(P(j^{1/d}))^2<\infty$. Let $\tilde{H}\geq 1$ be the constant from the condition $(M.2)$ on $\tilde{M}_p$. One easily verifies that there exists $c''>0$, which depends only on $\tilde{M}_p$, such that $P(\lambda^{2})\geq c''e^{2\tilde{M}(\lambda/(\tilde{H}\sqrt{l'}))}\geq c''e^{\tilde{M}(\lambda/(\tilde{H}\sqrt{l'}))}$, $\lambda\geq 0$. Hence,
\beq\label{ktn995109}
\sup_{j\in\NN} |c_j|\exp\left(\tilde{M}\left(j^{1/(2d)}/(\tilde{H}\sqrt{l'})\right)\right)<\infty.
\eeq
For $(a)$, this estimate together with \cite[Theorem 4.1]{djvin} (see also \cite{gpr2}) automatically implies $u\in\SSS^{\{\tilde{M}_p\}}(\RR^d)$. For $(b)$, $l'$ was arbitrary, hence this estimate together with the quoted result implies $u\in\SSS^{(\tilde{M}_p)}(\RR^d)$. As the canonical inclusion $H^*_{A_p,\rho}(g)\rightarrow L^2(\RR^d)$ is continuous, it has a closed graph in $H^*_{A_p,\rho}(g)\times L^2(\RR^d)$ and, by what we just proved, its graph is in fact closed in $H^*_{A_p,\rho}(g)\times \SSS^{\{\tilde{M}_p\}}(\RR^d)$ for $(a)$ and in $H^*_{A_p,\rho}(g)\times \SSS^{(\tilde{M}_p)}(\RR^d)$ for $(b)$. Now, the Pt\'{a}k closed graph theorem \cite[Theorem 8.5, p. 166]{Sch} proves the desired continuity ($\SSS^{(\tilde{M}_p)}(\RR^d)$ and $\SSS^{\{\tilde{M}_p\}}(\RR^d)$ are Pt\'{a}k spaces since every $(F)$ and every $(DFS)$-space is Pt\'{a}k space; see \cite[Section 8, p. 162]{Sch}).
\end{proof}

\begin{remark} Proposition \ref{ktr991101} is applicable when $\tilde{M}_p=p!^s$. In this case $(a)$ gives that $H^*_{A_p,\rho}(g)$ is continuously injected into $\SSS^{\{s\}}(\RR^d)$ and $(b)$ yields that $H^*_{A_p,\rho}(g)$ is continuously injected into $\SSS^{(s)}(\RR^d)$.
\end{remark}

We have the following consequence of Proposition \ref{ktr991101}.

\begin{corollary}\label{corollaryontheregofsob}
Let $s$ and $\tilde{M}_p$ be as in Proposition \ref{ktr991101}. Let $g$ be $\Gamma_{A_p,\rho}^{*,\infty}$-admissible. If $1/g$ satisfies the assumption in Proposition \ref{ktr991101} $(a)$, then $\SSS'^{\{\tilde{M}_p\}}(\RR^d)$ is continuously injected into $H^*_{A_p,\rho}(g)$. If $1/g$ satisfies the assumption in Proposition \ref{ktr991101} $(b)$, then $\SSS'^{(\tilde{M}_p)}(\RR^d)$ is continuously injected into $H^*_{A_p,\rho}(g)$.
\end{corollary}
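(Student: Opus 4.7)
The plan is to derive this by duality from Proposition \ref{ktr991101}, using property $(vi)$ of the spaces $H^*_{A_p,\rho}(f)$ to identify strong duals, and using the symmetry of the hypothesis with respect to the transformation $\xi\mapsto -\xi$.

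First I would record the elementary symmetry observation: if $\tilde g(x,\xi)=g(x,-\xi)$ and $a\in\Gamma_{A_p,\rho}^{*,\infty}(\RR^{2d})$ is $g$-elliptic, then $a(x,-\xi)$ is $\tilde g$-elliptic, so $\tilde g$ is again $\Gamma_{A_p,\rho}^{*,\infty}$-admissible. Moreover, since $|(x,-\xi)|=|(x,\xi)|$, the bound (\ref{ktr771305}) is invariant under this transformation, and hence $1/\tilde g$ satisfies the same assumption as $1/g$. Therefore, Proposition \ref{ktr991101} applies to $1/\tilde g$ and yields a continuous injection
\[
H^*_{A_p,\rho}(1/\tilde g)\;\hookrightarrow\;\SSS^{\{\tilde M_p\}}(\RR^d)\quad\text{in case $(a)$},\qquad H^*_{A_p,\rho}(1/\tilde g)\;\hookrightarrow\;\SSS^{(\tilde M_p)}(\RR^d)\quad\text{in case $(b)$}.
\]

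Next, I would transpose these injections. By property $(iii)$, $\SSS^*(\RR^d)$ is continuously and densely injected into $H^*_{A_p,\rho}(1/\tilde g)$; since $M_p\subset p!^s\subset \tilde M_p$ (with $\prec$ in the Roumieu case), $\SSS^*(\RR^d)$ is also continuously and densely included in $\SSS^{\{\tilde M_p\}}(\RR^d)$ (resp. in $\SSS^{(\tilde M_p)}(\RR^d)$). Consequently the image of $H^*_{A_p,\rho}(1/\tilde g)$ in $\SSS^{\{\tilde M_p\}}(\RR^d)$ (resp.\ $\SSS^{(\tilde M_p)}(\RR^d)$) contains $\SSS^*(\RR^d)$ and is therefore dense. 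Passing to the strong duals via the Hahn--Banach theorem, the transpose is a continuous injection of $\SSS'^{\{\tilde M_p\}}(\RR^d)$ into $(H^*_{A_p,\rho}(1/\tilde g))'$ (resp.\ of $\SSS'^{(\tilde M_p)}(\RR^d)$ into the same dual).

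Finally, I would identify the dual by property $(vi)$. Since $\widetilde{1/\tilde g}(x,\xi)=1/\tilde g(x,-\xi)=1/g(x,\xi)$, we have $1/\widetilde{1/\tilde g}=g$, so
\[
(H^*_{A_p,\rho}(1/\tilde g))'\;\simeq\;H^*_{A_p,\rho}(g),
\]
and this identification is compatible with the canonical duality $\langle\SSS^*,\SSS'^*\rangle$. Composing the two continuous injections of the previous step with this isomorphism yields the announced continuous injections of $\SSS'^{\{\tilde M_p\}}(\RR^d)$ (resp.\ $\SSS'^{(\tilde M_p)}(\RR^d)$) into $H^*_{A_p,\rho}(g)$.

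The main technical point to verify carefully is that the duality pairing in property $(vi)$ agrees, under the embeddings, with the canonical pairing $\langle\SSS^{\{\tilde M_p\}},\SSS'^{\{\tilde M_p\}}\rangle$ (and analogously in the Beurling case), so that the transpose map actually coincides with the natural inclusion on test functions; this is essentially automatic from the fact that all pairings in sight extend the integration pairing on $\SSS^*\times\SSS^*$, but it is the step that requires attention.
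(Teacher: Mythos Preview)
Your argument is correct. For part $(a)$ it is essentially the paper's proof: both apply Proposition~\ref{ktr991101} and then transpose via property~$(vi)$; the paper inserts an auxiliary radial $f$ with $f\asymp 1/g$ so that $\tilde f=f$ and the duality reads $H^*_{A_p,\rho}(f)'\simeq H^*_{A_p,\rho}(1/f)$, whereas you apply the proposition directly to $1/\tilde g$ and keep track of the reflection, which is slightly cleaner.

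For part $(b)$ your route is genuinely shorter. The paper does \emph{not} transpose Proposition~\ref{ktr991101}\,$(b)$ directly; instead it builds a one-parameter family $f_l$ of admissible functions, proves the projective-limit identification $\SSS^{(\tilde M_p)}(\RR^d)\simeq\varprojlim_l H^*_{A_p,\rho}(f_l)$ (Lemma~\ref{lll}), and reads off the dual embedding from that. Your single transposition of $H^*_{A_p,\rho}(1/\tilde g)\hookrightarrow\SSS^{(\tilde M_p)}(\RR^d)$ bypasses this machinery entirely. What the paper's approach buys is Lemma~\ref{lll} itself, which is recorded separately as Corollary~4.12 and has independent interest; your argument gives the corollary with less work but does not yield that structural description. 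The density step you flag (that $\SSS^*(\RR^d)$ is dense in $\SSS^{\{\tilde M_p\}}(\RR^d)$, resp.\ $\SSS^{(\tilde M_p)}(\RR^d)$) is indeed the only point requiring care; it is standard, e.g.\ via Hermite expansions as in \cite{djvin}, and the paper invokes it tacitly in part $(a)$ as well.
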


\begin{proof} Assume first that $1/g$ satisfies the assumption of Proposition \ref{ktr991101} $(a)$ with some $c,l>0$. Put $\widehat{M}_n=2^nl^{-2n}\tilde{M}_{2n}$, $n\in\NN$. It is easy to verify that $\widehat{M}_n$ satisfies the conditions of Theorem \ref{serpolsym} with the given $s$ and $m=2$ ((\ref{stk997733}) follows from the fact that $\tilde{M}_p/p!^s$ is monotonically increasing). Theorem \ref{serpolsym} and Corollary \ref{harmosers} imply that
$$
f:\RR^{2d}\rightarrow [1,\infty),\,\, f(w)=1+\sum_{n=1}^{\infty}|w|^{2n}/\widehat{M}_n,
$$
is $\Gamma_{A_p,\rho}^{*,\infty}$-admissible with $b=\sum_{n=0}^{\infty}a^{(\# n)}/\widehat{M}_n\in \Gamma_{A_p,\rho}^{*,\infty}(\RR^d)$ being $f$-elliptic, where we take $a(w)=|w|^2$. If we denote by $\tilde{H}\geq 1$ the constant from the condition $(M.2)$ on $\tilde{M}_p$, one can readily check that there exist $c_1,c_2,c_3>0$ such that
\beqs
c_1/g(w)\geq c_2e^{\tilde{M}(l|w|)}\geq f(w)\geq c_3e^{2\tilde{M}(l|w|/(\tilde{H}\sqrt{2}))}\geq c_3e^{\tilde{M}(l|w|/(\tilde{H}\sqrt{2}))}.
\eeqs
Thus, $H^*_{A_p,\rho}(1/f)$ is continuously injected into $H^*_{A_p,\rho}(g)$ and $f$ satisfies the condition from Proposition \ref{ktr991101} $(a)$. Hence, $H^*_{A_p,\rho}(f)$ is continuously (and densely) injected into $\SSS^{\{\tilde{M}_p\}}(\RR^d)$ and, by duality (cf. $(vi)$), we derive that $\SSS'^{\{\tilde{M}_p\}}(\RR^d)$ is continuously injected into $H^*_{A_p,\rho}(1/f)$. This ends the proof of the first part.\\
\indent Assume now that $1/g$ satisfies the assumptions from Proposition \ref{ktr991101} $(b)$. For each $l>0$, we define a sequence of positive numbers $\widehat{M}^{(l)}_n=l^{-2n}\tilde{M}_n$, $n\in\NN$. Clearly $\widehat{M}^{(l)}_n$ satisfies the conditions of Theorem \ref{serpolsym} with the given $s$ and $m=2$. Hence, Theorem \ref{serpolsym} and Corollary \ref{harmosers} imply that the functions
$$
f_l:\RR^{2d}\rightarrow [1,\infty),\,\, f_l(w)=1+\sum_{n=1}^{\infty}|w|^{2n}/\widehat{M}^{(l)}_n,
$$
are $\Gamma_{A_p,\rho}^{(M_p),\infty}$-admissible, for each $l>0$. There exist $c',c''>0$ such that
\beq\label{ktj995109}
c'e^{\tilde{M}(l\sqrt{2}|w|)}\geq f_l(w)\geq c''e^{2\tilde{M}(l|w|/\tilde{H})}\geq c''e^{\tilde{M}(l|w|/\tilde{H})},\,\, \forall w\in\RR^{2d},\, \forall l>0.
\eeq
Since $M_p\prec \tilde{M}_p$ in the Roumieu case, (\ref{ktj995109}) and the last part of $(viii)$ imply that $ f_l$ are also $\Gamma_{A_p,\rho}^{\{M_p\},\infty}$-admissible (in fact, $H^{(M_p)}_{A_p,\rho}( f_l)$ and $H^{\{M_p\}}_{A_p,\rho}( f_l)$ are topologically isomorphic for each $l>0$ by $(viii)$). Furthermore, if $l\leq l'$ then $f_l(w)\leq  f_{l'}(w)$, $\forall w\in\RR^{2d}$. Thus, $H^*_{A_p,\rho}( f_{l'})$ is continuously (and densely) injected into $H^*_{A_p,\rho}( f_l)$. The proof of the second part relies on the following lemma.

\begin{lemma}\label{lll} For each $l>0$, $\SSS^{(\tilde{M}_p)}(\RR^d)\hookrightarrow H^*_{A_p,\rho}( f_l)$. Furthermore, $\SSS^{(\tilde{M}_p)}(\RR^d)$ is topologically isomorphic to $\ds \lim_{\substack{\longleftarrow\\ l\rightarrow \infty}}H^*_{A_p,\rho}( f_l)$, where the linking mappings in the projective limit are the canonical inclusions.
\end{lemma}

Let $u\in \SSS'^{(\tilde{M}_p)}(\RR^d)$. By Lemma \ref{lll} and the definition of the projective limit topology, there exists $l>0$ such that $u$ can be continuously extended to a functional on $H^*_{A_p,\rho}( f_l)$. Applying $(vi)$, we conclude $u\in H^*_{A_p,\rho}(1/ f_l)$. Because of (\ref{ktj995109}) and the condition on $1/g$, we derive $u\in H^*_{A_p,\rho}(g)$, i.e. $\SSS'^{(\tilde{M}_p)}(\RR^d)\subseteq H^*_{A_p,\rho}(g)$. Since both space are continuously injected into $\SSS'^*(\RR^d)$, the continuity of this inclusion follows from the Pt\'{a}k closed graph theorem \cite[Theorem 8.5, p. 166]{Sch} in the same way as in the proof of Proposition \ref{ktr991101}.\\
\\
\noindent {\it Proof of Lemma \ref{lll}}.
Denote $\ds \lim_{\substack{\longleftarrow\\ l\rightarrow \infty}}H^*_{A_p,\rho}( f_l)$ by $E$. As the projective limit is equivalent to a countable one, $E$ is an $(F)$-space. Since $M_p\subset p!^s$ in the Beurling case and $M_p\prec p!^s$ in the Roumieu case respectively, (\ref{ktj995109}) together with $(viii)$ yields that $\SSS^{(\tilde{M}_p)}(\RR^d)\hookrightarrow H^*_{A_p,\rho}( f_l)$, for each $l>0$. Hence, $\SSS^{(\tilde{M}_p)}(\RR^d)$ is continuously injected into $E$. Because of $(vii)$, $E$ is continuously injected into $\SSS(\RR^d)$. Let $u\in E$, i.e. $u\in H^*_{A_p,\rho}( f_l)$, $\forall l>0$. Let $u=\sum_{j=0}^{\infty}c_j\psi_j$, where $c_j=(u,\psi_j)$, $j\in\NN$, and $\psi_j$, $j\in\NN$, are the Hermite functions; the series absolutely converges in $\SSS(\RR^d)$. Because of (\ref{ktj995109}), we can apply the same technique as in the proof of Proposition \ref{ktr991101} to obtain
\beqs
\sup_{j\in\NN}|c_j|\exp\left(\tilde{M}(lj^{1/(2d)}/\tilde{H})\right)<\infty,\,\, \forall l>0.
\eeqs
Hence, \cite[Theorem 4.1]{djvin} proves that $u\in\SSS^{(\tilde{M}_p)}(\RR^d)$. Now, the open mapping theorem for $(F)$-spaces completes the proof of the claim.
\end{proof}

As a direct consequence of Lemma \ref{lll} and (\ref{ktj995109}) together with Remark \ref{admfctwit}, we have the following result.

\begin{corollary}
Let $s>1/(1-\rho)$ is such that $M_p\subset p!^s$ in the Beurling case and $M_p\prec p!^s$ in the Roumieu case respectively. Then $\SSS^{(s)}(\RR^d)\hookrightarrow H^*_{A_p,\rho}(e^{l|w|^{1/s}})$ and
$$
\SSS^{(s)}(\RR^d)=\lim_{\substack{\longleftarrow\\ l\rightarrow \infty}} H^*_{A_p,\rho}(e^{l|w|^{1/s}})\quad \mbox{topologically},
$$
where the linking mappings in the projective limit are the canonical inclusions.
\end{corollary}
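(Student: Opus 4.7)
The plan is to specialise Lemma \ref{lll} (and the constructions from the proof of Corollary \ref{corollaryontheregofsob}) to the case $\tilde{M}_p=p!^s$, which gives $\SSS^{(\tilde{M}_p)}(\RR^d)=\SSS^{(s)}(\RR^d)$ and makes the lemma directly applicable: the required hypothesis $p!^s\subset\tilde{M}_p$ with $\tilde{M}_p/p!^s$ monotonically increasing holds trivially. By Remark \ref{admfctwit}, the weights $e^{l|w|^{1/s}}$, $l>0$, are $\Gamma^{*,\infty}_{A_p,\rho}$-admissible, so the spaces $H^*_{A_p,\rho}(e^{l|w|^{1/s}})$ are well defined. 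The first inclusion $\SSS^{(s)}(\RR^d)\hookrightarrow H^*_{A_p,\rho}(e^{l|w|^{1/s}})$ will then follow from the topological isomorphism once it is established, since a projective limit injects continuously into every component.

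The second assertion reduces to identifying the projective system $\{H^*_{A_p,\rho}(e^{l|w|^{1/s}})\}_{l>0}$ with the one from Lemma \ref{lll}. First I would recall that, for the sequence $p!^s$, the associated function satisfies $\tilde{M}(\rho)\asymp \rho^{1/s}$; more precisely, there are constants $c_0,C_0>0$ with $c_0\rho^{1/s}-C_0\leq \tilde{M}(\rho)\leq C_0\rho^{1/s}+C_0$ for $\rho\geq 0$. Plugging this into (\ref{ktj995109}) gives, for each $l>0$, constants $c_1(l),c_2(l)>0$ with $c_1(l),c_2(l)\to\infty$ as $l\to\infty$ and a uniform constant $C>0$ such that
\begin{equation*}
C^{-1}e^{c_1(l)|w|^{1/s}}\leq f_l(w)\leq C e^{c_2(l)|w|^{1/s}},\qquad w\in\RR^{2d}.
\end{equation*}

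Next I would invoke property $(iv)$ of the Shubin-Sobolev spaces: pointwise inequalities between admissible weights yield continuous inclusions in the reverse direction. Applying it to the two inequalities above produces, for every $l>0$, continuous inclusions
\begin{equation*}
H^*_{A_p,\rho}\bigl(e^{c_2(l)|w|^{1/s}}\bigr)\hookrightarrow H^*_{A_p,\rho}(f_l)\hookrightarrow H^*_{A_p,\rho}\bigl(e^{c_1(l)|w|^{1/s}}\bigr).
\end{equation*}
Since $c_1(l),c_2(l)\to\infty$, the two families $\{f_l\}_{l>0}$ and $\{e^{l|w|^{1/s}}\}_{l>0}$ generate cofinal projective systems in $\mathfrak{R}$-order, and hence their projective limits are canonically topologically isomorphic. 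Lemma \ref{lll} (with $\tilde{M}_p=p!^s$) identifies the first limit with $\SSS^{(s)}(\RR^d)$, proving the desired isomorphism.

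I do not expect a genuine obstacle; the only slightly delicate point is the precise asymptotics of $\tilde{M}$ needed to ensure that the constants $c_1(l),c_2(l)$ tend to infinity with $l$ (so that the two projective bases are mutually cofinal). Everything else is a direct combination of Lemma \ref{lll}, Remark \ref{admfctwit}, estimate (\ref{ktj995109}), and the monotonicity property $(iv)$ of the $H^*_{A_p,\rho}$-scale.
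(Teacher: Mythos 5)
Your proposal is correct and follows essentially the same route as the paper, which derives this corollary directly from Lemma \ref{lll}, the estimate (\ref{ktj995109}) specialised to $\tilde{M}_p=p!^s$ (where $\tilde{M}(\rho)\asymp\rho^{1/s}$), and Remark \ref{admfctwit}. Your explicit cofinality argument via property $(iv)$ merely spells out what the paper leaves implicit.
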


\noindent \textbf{An application.}
We end this section with the following illustrative application of the Fredholm property (see $(x)$) and the regularity result given in Proposition \ref{ktr991101}. Let $s$ and $\tilde{M}_p$ be as in Proposition \ref{ktr991101}. For $l'>0$ fixed, set $\widehat{M}_n=l'^{-2n}\tilde{M}_{2n}$, $n\in\NN$, and we consider again $P:\RR\rightarrow \RR$ given by $P(\lambda)=1+\sum_{n=1}^{\infty}\lambda^n/\widehat{M}_n$, the symbol $a(w)=|w|^2$ of the harmonic oscillator $H$, and the hypoelliptic symbol $b=\sum_{n=0}^{\infty}a^{(\# n)}/\widehat{M}_n\in\Gamma_{A_p,\rho}^{*,\infty}(\RR^{2d})$ whose Weyl quantisation is $b^w=\sum_{n=0}^{\infty}H^n/\widehat{M}_n$  (cf. Theorem \ref{serpolsym} and Corollary \ref{harmosers}). Once more, we set $f(w)=P(|w|^2)$ so that $f$ is $\Gamma_{A_p,\rho}^{*,\infty}$-admissible and $b$ is $f$-elliptic. For each $\lambda\in\RR$, $b-\lambda$ is also $f$-elliptic, and hence, $\mathrm{Ker}\, (b^w-\lambda)\subseteq\SSS^*(\RR^d)$. By $(x)$, $b^w_{|H^*_{A_p,\rho}(\tilde{f})}-\lambda: H^*_{A_p,\rho}(\tilde{f})\rightarrow L^2(\RR^d)$ is Fredholm and, since $b$ is real-valued, $\mathrm{ind}\,(b^w_{|H^*_{A_p,\rho}(\tilde{f})}-\lambda) =0$. If $\lambda_{\alpha}=\sum_{j=1}^d (2\alpha_j+1)$, $\alpha\in\NN^d$, denote the eigenvalues of $H$, then, because of Corollary \ref{cor easy case eigenvalues}, for any $\lambda\neq P(\lambda_{\alpha})$, $\forall \alpha\in\NN^d$, we have $\mathrm{Ker}\, (b^w-\lambda)=\{0\}$ and hence $b^w_{|H^*_{A_p,\rho}(f)}-\lambda$ is topological isomorphism between $H^*_{A_p,\rho}(f)$ and $L^2(\RR^d)$.

As a consequence, for each $v\in L^2(\RR^d)$, the equation $(b^w-\lambda)u=v$ is uniquely solvable in $\SSS'^*(\RR^d)$ and the solution must belong to $H^*_{A_p,\rho}(f)$ (this is particulary applicable with $\lambda=0$ since $P(\lambda_{\alpha})>1$, $\forall \alpha\in\NN^d$). Using Proposition \ref{ktr991101}, we can give qualitative information concerning its smoothness. Since $f$ satisfies (\ref{ktr771305}) with some $c,l>0$, we conclude that the solution $u$ must belong to $\SSS^{\{\tilde{M}_p\}}(\RR^d)$. If $\lambda=P(\lambda_{\alpha})$ for some $\alpha\in\NN^d$, then (because of Corollary \ref{cor easy case eigenvalues}) the solutions of the equation $(b^w-P(\lambda_{\alpha}))u=0$ are exactly all the functions that belong to the eigenspace of $H$ that corresponds to $\lambda_{\alpha}$ which is a finite dimensional subspace of $\SSS^*(\RR^d)$.

\section{Integral formula for the index}
\label{integral formula index}
The goal of this section is to give an integral formula for the index of the $\Psi$DO $a^w$ with $f$-elliptic symbol $a\in \Gamma_{A_p,\rho}^{*,\infty}(\RR^{2d})$. In fact, we will consider the more general case when $\mathbf{a}$ has values in $\mathcal{L}_b(V)=\mathcal{L}_b(V,V)$ for a finite dimensional complex $(B)$-space $V$. In order to even say anything about its index, we first need to provide sufficient conditions when such operator valued $\Psi$DO is Fredholm operator between appropriate $V$-valued Shubin-Sobolev spaces defined in Section \ref{fredh}. As one might expect, one of the (essential) conditions is the boundedness from below of $|\det \mathbf{a}|$ by an appropriate $\Gamma_{A_p,\rho}^{*,\infty}$-admissible functions. This agrees with the finite order case; see \cite{hor-1}. The integral formula we will show (see Theorem \ref{thminint} below) is in the spirit of Fedosov \cite{fedosov1,fedosov2} and H\"ormander \cite{hor-1}.\\
\indent We start with the following simple observation whose proof is trivial and we omit it; here and throughout the rest of the article, given a l.c.s. $E$ we denote by $E^n$ the l.c.s. $\ds\underbrace{E\times\ldots\times E}_n$. We note that its strong dual $(E^n)'_b$ is canonically isomorphic to $E'^n_b$.

\begin{lemma}\label{lemmaformatrixop}
Let $E$ and $F$ be l.c.s. and $n\in\ZZ_+$. If for each $k,j\in\{1,\ldots,n\}$ we are given $T_{k,j}\in\mathcal{L}(E,F)$ then the linear operator $(T_{k,j})_{k,j}:E^n\rightarrow F^n$, $(T_{k,j})_{k,j}(e_1,\ldots,e_n)=(\sum_{j=1}^nT_{1,j}e_j,\ldots,\sum_{j=1}^nT_{n,j}e_j)$, is well defined and continuous. If additionally all $T_{k,j}$ are compact then so is $(T_{k,j})_{k,j}$.
\end{lemma}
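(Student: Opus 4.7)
The plan is to exploit the universal properties of the product topology on $E^{n}$ and $F^{n}$ and the fact that compactness behaves well under composition with continuous linear maps.

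First, I would set up the standard factorisation through projections and injections. Let $\pi_{j}:E^{n}\rightarrow E$ denote the canonical continuous projection onto the $j$-th coordinate, and $\iota_{k}:F\rightarrow F^{n}$ the canonical continuous injection into the $k$-th coordinate. Both maps are continuous by the definition of the product topology. Then the operator in the statement can be rewritten as
\[
(T_{k,j})_{k,j}=\sum_{k=1}^{n}\sum_{j=1}^{n}\iota_{k}\circ T_{k,j}\circ \pi_{j}.
\]
Since composition of continuous linear maps is continuous and $\mathcal{L}(E^{n},F^{n})$ is closed under finite sums, this immediately gives that $(T_{k,j})_{k,j}$ is well defined and belongs to $\mathcal{L}(E^{n},F^{n})$. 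Alternatively, one may verify continuity coordinate by coordinate: by the universal property of the product topology on $F^{n}$, continuity of $(T_{k,j})_{k,j}$ is equivalent to continuity of each component $\sum_{j=1}^{n}T_{k,j}\circ\pi_{j}:E^{n}\rightarrow F$, which is plainly a finite sum of compositions of continuous maps.

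For the compactness assertion, recall that the class of compact operators between l.c.s. is a two-sided ideal, i.e. it is closed under composition with continuous linear operators from either side, and it is stable under finite sums. Hence, assuming each $T_{k,j}$ is compact, every summand $\iota_{k}\circ T_{k,j}\circ \pi_{j}$ is compact as the composition of a compact operator with the continuous maps $\pi_{j}$ and $\iota_{k}$; the full operator $(T_{k,j})_{k,j}$ is then a finite sum of compact operators and therefore compact.

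There is no real obstacle here: the statement is bookkeeping that packages matrix-valued symbols into the framework developed in Section \ref{fredh}, and this is precisely why the authors chose to omit the argument. The only minor point worth being explicit about is the use of the product topology characterisation of continuity together with the two-sided ideal property of the compact operators, both of which are standard in the theory of locally convex spaces.
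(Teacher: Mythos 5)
Your proof is correct: the factorisation $(T_{k,j})_{k,j}=\sum_{k,j}\iota_{k}\circ T_{k,j}\circ\pi_{j}$ together with the stability of compact operators under composition with continuous maps and under finite sums is exactly the routine argument the statement calls for. The paper itself omits the proof as trivial, so there is nothing to compare against; your write-up supplies the standard verification, and no gap remains (all spaces involved are Hausdorff l.c.s., so sums of relatively compact sets are indeed relatively compact).
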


Let $n\in\ZZ_+$ and let $V$ be an $n$-dimensional complex $(B)$-space with norm $\|\cdot\|_V$. For each $h>0$, we define the $(B)$-space $\SSS^{M_p,h}(\RR^d;V)$ of all $\boldsymbol{\varphi}\in C^{\infty}(\RR^d;V)$ for which the norm $\|\boldsymbol{\varphi}\|_{h,V}=\sup_{\alpha\in\NN^d}\sup_{x\in\RR^d} h^{|\alpha|}\|D^{\alpha}\boldsymbol{\varphi}(x)\|_Ve^{M(h|x|)}/M_{\alpha}$ is finite. One easily verifies that for $h_1,h_2>0$ satisfying $Hh_1< h_2$, the inclusion $\SSS^{M_p,h_2}(\RR^d;V)\rightarrow \SSS^{M_p,h_1}(\RR^d;V)$ is compact. As l.c.s. we define
\beqs
\SSS^{(M_p)}(\RR^d;V)=\lim_{\substack{\longleftarrow\\ h\rightarrow \infty}}\SSS^{M_p,h}(\RR^d;V),\,\,\, \SSS^{\{M_p\}}(\RR^d;V)=\lim_{\substack{\longrightarrow \\ h\rightarrow 0}}\SSS^{M_p,h}(\RR^d;V).
\eeqs
Thus, $\SSS^{(M_p)}(\RR^d;V)$ is an $(FS)$-space and $\SSS^{\{M_p\}}(\RR^d;V)$ is a $(DFS)$-space. Fixing a basis $\{e_k\}_k$ for $V$ induces the topological isomorphism $\boldsymbol{\Omega}_{\{e_k\}_k}:\SSS^*(\RR^d;V)\rightarrow (\SSS^*(\RR^d))^n$, $\boldsymbol{\varphi}\mapsto (e'_1\circ\boldsymbol{\varphi},\ldots,e'_n\circ\boldsymbol{\varphi})$, where $\{e'_k\}_k$ is the dual basis. Following Komatsu \cite{Komatsu3}, we define the l.c.s. of $V$-valued temperate ultradistributions of class $*$ $\SSS'^*(\RR^d;V)$ as $\mathcal{L}_b(\SSS^*(\RR^d),V)$. It is canonically isomorphic to $\SSS'^*(\RR^d)\otimes V$ since $V$ is finite dimensional and $\SSS'^*(\RR^d)$ is a complete Montel space; the topology on the tensor product is $\pi=\epsilon$. Furthermore, $\SSS^*(\RR^d;V)$ is canonically isomorphic to $\SSS^*(\RR^d)\otimes V$ (with topology $\pi=\epsilon$) which, in turn, is canonically isomorphic to $\mathcal{L}_b(\SSS'^*(\RR^d),V)$. Hence $\SSS^*(\RR^d;V)$ is canonically included into $\SSS'^*(\RR^d;V)$ with dense image. Moreover, $\boldsymbol{\Omega}_{\{e_k\}_k}$ naturally extends to the topological isomorphism $\boldsymbol{\Omega}_{\{e_k\}_k}:\SSS'^*(\RR^d;V)\rightarrow (\SSS'^*(\RR^d))^n$, $\mathbf{f}\mapsto (e'_1\circ\mathbf{f},\ldots,e'_n\circ\mathbf{f})$. Finally, $\SSS^*(\RR^d;V)$ and $\SSS'^*(\RR^d;V)$ are nuclear and the strong dual of $\SSS^*(\RR^d;V)$ is $\SSS'^*(\RR^d;V')$ and the strong dual of $\SSS'^*(\RR^d;V)$ is $\SSS^*(\RR^d;V')$.\\
\indent Let $f$ be $\Gamma_{A_p,\rho}^{*,\infty}$-admissible function. We define the $V$-valued ultradistributional Shubin-Sobolev space of class $*$ and order $f$ as follows. Let $a\in \Gamma_{A_p,\rho}^{*,\infty}(\RR^d)$ be $f$-elliptic with a parametrix $q^w$, $q\in\Gamma_{A_p,\rho}^{*,\infty}(\RR^d)$, and $T$ be the $*$-regularising operator $q^wa^w-\mathrm{Id}$. We define $H^*_{A_p,\rho}(f;V)$ and the norm on it by
\beqs
H^*_{A_p,\rho}(f;V)&=&\{\mathbf{u}\in\SSS'^*(\RR^d;V)|\, a^w(e'\circ\mathbf{u})\in L^2(\RR^d),\, \forall e'\in V'\}\\
\|\mathbf{u}\|_{H^*_{A_p,\rho}(f;V)}&=&\sup_{\|e'\|_{V'}\leq 1}\|a^w(e'\circ\mathbf{u})\|_{L^2(\RR^d)}+ \sup_{\|e'\|_{V'}\leq 1}\|T(e'\circ\mathbf{u})\|_{L^2(\RR^d)}.
\eeqs
Because of Section \ref{fredh} $(i)$, we infer that $H^*_{A_p,\rho}(f;V)$ and the topology induced on it by the norm $\|\cdot\|_{H^*_{A_p,\rho}(f;V)}$ do not depend on the particular choices of $a$, its parametrix $q$ and the resulting $*$-regularising operator $T$. Clearly, $\boldsymbol{\Omega}_{\{e_k\}_k}$ restricts to a topological isomorphism from $H^*_{A_p,\rho}(f;V)$ onto $(H^*_{A_p,\rho}(f))^n$ and $H^*_{A_p,\rho}(f;V)$ is topologically isomorphic to $H^*_{A_p,\rho}(f)\otimes V$ (the topology is $\pi=\epsilon$). In view of Section \ref{fredh} $(ii)$, fixing an inner product on $V$ and $\{e_k\}_k$ an orthonormal basis for $V$ with $\{e'_k\}_k$ being its dual basis introduces the inner product
$$
\sum_{k=1}^n\left(a^w(e'_k\circ\mathbf{u}),a^w(e'_k\circ\mathbf{v})\right)_{L^2(\RR^d)}+ \sum_{k=1}^n\left(T(e'_k\circ\mathbf{u}),T(e'_k\circ\mathbf{v})\right)_{L^2(\RR^d)}
$$
on $H^*_{A_p,\rho}(f;V)$ and, with it, it becomes a Hilbert space with induced norm equivalent to the norm we introduced above. We have the following continuous and dense inclusions: $\SSS^*(\RR^d;V)\hookrightarrow H^*_{A_p,\rho}(f;V)\hookrightarrow \SSS'^*(\RR^d;V)$. Of course, $H^*_{A_p,\rho}(1;V)$ is just $L^2(\RR^d;V)$.\\
\indent We denote by $\Gamma_{A_p,\rho}^{*,\infty}(\RR^{2d};\mathcal{L}_b(V))$ the following subspace of $C^{\infty}(\RR^{2d};\mathcal{L}_b(V))$: a symbol $\mathbf{a}\in C^{\infty}(\RR^{2d};\mathcal{L}_b(V))$ belongs to $\Gamma_{A_p,\rho}^{*,\infty}(\RR^{2d};\mathcal{L}_b(V))$ if there exists $m>0$ such that for every $h>0$ there exists $C>0$ (resp. there exists $h>0$ such that for every $m>0$ there exists $C>0$) such that
\beqs
\|D^{\alpha}\mathbf{a}(w)\|_{\mathcal{L}_b(V)}\leq Ch^{|\alpha|}A_{\alpha}e^{M(m|w|)}\langle w\rangle^{-\rho|\alpha|},\,\, w\in\RR^{2d},\, \alpha\in\NN^{2d}.
\eeqs
Clearly, when $V=\CC$, this is nothing else but $\Gamma_{A_p,\rho}^{*,\infty}(\RR^{2d})$. In a similar fashion as for $\Gamma_{A_p,\rho}^{*,\infty}(\RR^{2d})$, one can introduce a Hausdorff locally convex topology on $\Gamma_{A_p,\rho}^{*,\infty}(\RR^{2d};\mathcal{L}_b(V))$, but we will not need this fact. For each $e\in V$ and $v'\in V'$ the function $w\mapsto v'(\mathbf{a}(w)e)$ is an element of $\Gamma_{A_p,\rho}^{*,\infty}(\RR^{2d})$. Now, given $\tau\in\RR$, we define the $\tau$-quantisation of $\mathbf{a}$ as
\beq\label{defofoperatv}
\Op_{\tau}(\mathbf{a})\boldsymbol{\varphi}(x)=\frac{1}{(2\pi)^d} \int_{\RR^d}\int_{\RR^d}e^{i(x-y)\xi}\mathbf{a}((1-\tau)x+\tau y,\xi)\boldsymbol{\varphi}(y)dyd\xi,\,\, \boldsymbol{\varphi}\in\SSS^*(\RR^d;V),
\eeq
and the integral should be interpreted as an iterated integral. In an analogous fashion as for the scalar valued case, one can prove that $\Op_{\tau}(\mathbf{a})\boldsymbol{\varphi}$ is a $V$-valued continuous function with ultrapolynomial growth of class $*$ (see the proof of \cite[Theorem 1]{BojanP}) and thus an element of $\SSS'^*(\RR^d;V)$. Fix a basis $\{e_k\}_k$ for $V$ with $\{e'_k\}_k$ being its dual basis and denote by $a_{k,j}\in \Gamma_{A_p,\rho}^{*,\infty}(\RR^{2d})$ the function $w\mapsto e'_k(\mathbf{a}(w)e_j)$. Then $\boldsymbol{\Omega}_{\{e_k\}_k}\Op_{\tau}(\mathbf{a})\boldsymbol{\varphi}= (\Op_{\tau}(a_{k,j}))_{k,j}\boldsymbol{\Omega}_{\{e_k\}_k}\boldsymbol{\varphi}$ and $\boldsymbol{\varphi}\mapsto (\Op_{\tau}(a_{k,j}))_{k,j}\boldsymbol{\Omega}_{\{e_k\}_k}\boldsymbol{\varphi}$, $\SSS^*(\RR^d;V)\rightarrow (\SSS^*(\RR^d))^n$, is continuous (cf. Lemma \ref{lemmaformatrixop}). Thus, $\Op_{\tau}(\mathbf{a})$ is a well defined and continuous operator from $\SSS^*(\RR^d;V)$ into itself. Since $(\Op_{\tau}(a_{k,j}))_{k,j}$ uniquely extends to a continuous operator on $(\SSS'^*(\RR^d))^n$, it follows that $\Op_{\tau}(\mathbf{a})$ uniquely extends to a continuous operator on $\SSS'^*(\RR^d;V)$; notice that $\Op_{\tau}(\mathbf{a})$ does not depend on the choice of $\{e_k\}_k$ (i.e., it is coordinate free) since it is given as extension by continuity of $\Op_{\tau}(\mathbf{a}):\SSS^*(\RR^d;V)\rightarrow\SSS'^*(\RR^d;V)$. Given $\mathbf{a}\in \Gamma_{A_p,\rho}^{*,\infty}(\RR^{2d};\mathcal{L}_b(V))$, a basis $\{e_k\}_k$ for $V$ and a basis $\{v'_k\}_k$ for $V'$, let $a_{k,j}(w)=v'_k(\mathbf{a}(w)e_j)$. We will call the operator $(\Op_{\tau}a_{k,j})_{k,j}$ the matrix representation of $\Op_{\tau}(\mathbf{a})$ with respect to $\{e_k\}_k$ and $\{v'_k\}_k$. Denoting by $\{v_k\}_k$ the dual basis (of $V$) of $\{v'_k\}_k$ we always have
\beq\label{transitioofop}
\Op_{\tau}(\mathbf{a})=\boldsymbol{\Omega}^{-1}_{\{v_k\}_k} (\Op_{\tau}(a_{k,j}))_{k,j}\boldsymbol{\Omega}_{\{e_k\}_k},\,\,\, \mbox{as operators on}\,\, \SSS^*(\RR^d;V)\,\, \mbox{and}\,\, \SSS'^*(\RR^d;V).
\eeq
\indent Let now $f$ be positive continuous function on $\RR^{2d}$ with ultrapolynomial growth of class $*$. Given $\mathbf{a}\in\Gamma_{A_p,\rho}^{*,\infty}(\RR^{2d};\mathcal{L}_b(V))$, we write $\mathbf{a}\precsim f$ if the following estimate holds true: for every $h>0$ there exists $C>0$ (resp. there exist $h,C>0$) such that
\beqs
\|D^{\alpha}\mathbf{a}(w)\|_{\mathcal{L}_b(V)}\leq Ch^{|\alpha|}A_{\alpha}f(w)\langle w\rangle^{-\rho|\alpha|},\,\, w\in\RR^{2d},\, \alpha\in\NN^{2d}.
\eeqs
Notice that $\mathbf{a}\precsim f$ if and only if for some (equivalently, for any) base $\{e_k\}_k$ of $V$ and $\{v'_k\}_k$ of $V'$ we have $\{a_{k,j}|\, k,j=1,\ldots,n\}\precsim f$, where $(a_{k,j})_{k,j}$ is the matrix representation of $\mathbf{a}$ with respect to $\{e_k\}_k$ and $\{v'_k\}_k$. If $\mathbf{a}\in\Gamma_{A_p,\rho}^{*,\infty}(\RR^{2d};\mathcal{L}_b(V))$, then $\det\mathbf{a}\in \Gamma_{A_p,\rho}^{*,\infty}(\RR^{2d})$. If additionally $\mathbf{a}\precsim f$, then $\det\mathbf{a}\precsim f^n$. If $f$ and $f_1$ are $\Gamma_{A_p,\rho}^{*,\infty}$-admissible and $\mathbf{a}\precsim f$, then (\ref{transitioofop}) together with Lemma \ref{lemmaformatrixop} imply that $\mathbf{a}^w$ restricts to a continuous operator from $H^*_{A_p,\rho}(f_1;V)$ into $H^*_{A_p,\rho}(f_1/f;V)$.

\begin{definition}
Let $f$ be $\Gamma_{A_p,\rho}^{*,\infty}$-admissible. We say that $\mathbf{a}\in \Gamma_{A_p,\rho}^{*,\infty}(\RR^{2d};\mathcal{L}_b(V))$ is $f$-elliptic if $\mathbf{a}\precsim f$ and there exists $B,c>0$ such that $|\det\mathbf{a}(w)|\geq c (f(w))^n$, $\forall w\in Q^c_B$.
\end{definition}

\begin{remark}\label{remfortheequivcinv}
There exists $c'_0\geq 1$, which only depends on $n=\mathrm{dim}\, V$ and $\|\cdot\|_V$, such that for any invertible $A\in\mathcal{L}(V)$ we have: $1/\|A\|_{\mathcal{L}_b(V)}\leq \|A^{-1}\|_{\mathcal{L}_b(V)}\leq c'_0\|A\|^{n-1}_{\mathcal{L}_b(V)}/|\det A|$. An easy consequence of this is that if $\mathbf{a}\in \Gamma_{A_p,\rho}^{*,\infty}(\RR^{2d};\mathcal{L}_b(V))$ satisfies $\mathbf{a}\precsim f$ for some $\Gamma_{A_p,\rho}^{*,\infty}$-admissible function $f$, then the condition $|\det\mathbf{a}(w)|\geq c (f(w))^n$, $\forall w\in Q^c_B$, is equivalent to the following one: there exist $B,c>0$ such that $\mathbf{a}(w)$ is invertible for all $w\in Q^c_B$ and $\|(\mathbf{a}(w))^{-1}\|_{\mathcal{L}_b(V)}\leq c/f(w)$, $\forall w\in Q^c_B$.
\end{remark}

\begin{remark}
If $\mathbf{a}\in \Gamma_{A_p,\rho}^{*,\infty}(\RR^{2d};\mathcal{L}_b(V))$ is $f$-elliptic for some $\Gamma_{A_p,\rho}^{*,\infty}$-admissible function $f$, then $\det\mathbf{a}\in \Gamma_{A_p,\rho}^{*,\infty}(\RR^{2d})$ is $f^n$-elliptic (cf. Lemma \ref{lskvpc135}).
\end{remark}

\begin{lemma}\label{parametrixvvalue}
Let $f$ be $\Gamma_{A_p,\rho}^{*,\infty}$-admissible and let $\mathbf{a}\in \Gamma_{A_p,\rho}^{*,\infty}(\RR^{2d};\mathcal{L}_b(V))$ be $f$-elliptic. Define $\mathbf{q}_0(w)=\mathbf{a}(w)^{-1}$ on $Q^c_B$ and inductively, for $j\in\ZZ_+$,
\beqs
\mathbf{q}_j(x,\xi)=-\sum_{s=1}^j\sum_{|\alpha+\beta|=s} \frac{(-1)^{|\beta|}}{\alpha!\beta!2^s}\partial^{\alpha}_{\xi} D^{\beta}_x \mathbf{q}_{j-s}(x,\xi) \partial^{\beta}_{\xi} D^{\alpha}_x \mathbf{a}(x,\xi)\mathbf{q}_0(x,\xi),\,\, (x,\xi)\in Q^c_B.
\eeqs
Then, for every $h>0$ there exists $C>0$ (resp. there exist $h,C>0$) such that
\beq\label{estofpara11}
\left\|D^{\alpha}_w \mathbf{q}_j(w)\right\|_{\mathcal{L}_b(V)}\leq C\frac{h^{|\alpha|+2j}A_{|\alpha|+2j}}{\langle w\rangle^{\rho(|\alpha|+2j)}f(w)},\,\, w\in Q^c_B,\,\alpha\in\NN^{2d},\, j\in\NN.
\eeq
One can extend $\mathbf{q}_0$ to an element of $\Gamma^{*,\infty}_{A_p,\rho}(\RR^{2d};\mathcal{L}_b(V))$ by modifying it on $Q_{B'}\backslash Q_B$, for $B'>B$.
Moreover, there exists $R>B'$ such that $\mathbf{q}=R(\sum_j\mathbf{q}_j)\in \Gamma_{A_p,\rho}^{*,\infty}(\RR^{2d};\mathcal{L}_b(V))$ is $1/f$-elliptic and $\mathbf{q}^w\mathbf{a}^w-\mathrm{Id}\in \mathcal{L}(\SSS'^*(\RR^d;V),\SSS^*(\RR^d;V))$.
\end{lemma}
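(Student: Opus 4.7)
The plan is to mimic, in the operator-valued setting, the scalar argument used in Proposition \ref{parametweyl} and Remark \ref{ktv957939}, with the only genuine new ingredients being the submultiplicativity of $\|\cdot\|_{\mathcal{L}_b(V)}$ and the bound $\|\mathbf{a}(w)^{-1}\|_{\mathcal{L}_b(V)}\leq c/f(w)$ on $Q^c_B$ provided by Remark \ref{remfortheequivcinv}. Since $\mathbf{a}\precsim f$, the derivatives $D^\alpha\mathbf{a}$ satisfy the same type of Gevrey bounds as in the scalar case, now measured in $\|\cdot\|_{\mathcal{L}_b(V)}$.

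The first step is to establish \eqref{estofpara11} by induction on $j$. For $j=0$, the bound on $\mathbf{q}_0$ itself is exactly Remark \ref{remfortheequivcinv}. Differentiating the identity $\mathbf{q}_0\mathbf{a}=\mathrm{Id}$ and solving for $D^\alpha\mathbf{q}_0$, one obtains inductively on $|\alpha|$ the required bound on $\|D^\alpha\mathbf{q}_0\|_{\mathcal{L}_b(V)}$ exactly as in the scalar case, since every factor one picks up is either $D^\beta\mathbf{a}$ (estimated through $\mathbf{a}\precsim f$) or $\mathbf{q}_0$ (already estimated). For $j\geq 1$, the recursion expressing $\mathbf{q}_j$ in terms of $\mathbf{q}_0,\dots,\mathbf{q}_{j-1}$ is formally identical to the scalar recursion from Proposition \ref{parametweyl}; the standard combinatorial bookkeeping on $(M.2)$-type factors for $A_p$ together with the submultiplicativity $\|XY\|\leq \|X\|\|Y\|$ yields \eqref{estofpara11}. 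This is the technical heart of the proof and the main (though routine) obstacle, because one has to control constants uniformly in $j$, $\alpha$ in both the Beurling and the Roumieu case.

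Next, one extends $\mathbf{q}_0$ to an element of $\Gamma^{*,\infty}_{A_p,\rho}(\RR^{2d};\mathcal{L}_b(V))$ by multiplying by an appropriate $\DD^{(A_p)}$- (resp. $\DD^{\{A_p\}}$-) cut-off equal to $1$ on $Q^c_{B'}$. Fixing bases $\{e_k\}_k$ of $V$ and $\{v'_k\}_k$ of $V'$, the sum $\sum_j\mathbf{q}_j$ decomposes into matrix entries $\sum_j (\mathbf{q}_j)_{k,l}\in FS^{*,\infty}_{A_p,\rho}(\RR^{2d};B')$. Applying Proposition \ref{subexistest} componentwise yields a single $R>B'$ for which $R(\sum_j\mathbf{q}_j)$, defined entry-wise, is bounded in $\Gamma^{*,\infty}_{A_p,\rho}(\RR^{2d})$, and reassembling the entries gives $\mathbf{q}=R(\sum_j\mathbf{q}_j)\in \Gamma^{*,\infty}_{A_p,\rho}(\RR^{2d};\mathcal{L}_b(V))$. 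That $\mathbf{q}$ is $1/f$-elliptic follows because on $Q^c_{B''}$ (for $B''$ sufficiently large) $\mathbf{q}$ differs from $\mathbf{q}_0=\mathbf{a}^{-1}$ by a symbol of order $\langle w\rangle^{-2\rho}/f(w)$; hence $\mathbf{q}$ is still invertible there with $|\det\mathbf{q}|\geq c(1/f)^n$, via the two-sided bound in Remark \ref{remfortheequivcinv}.

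Finally, to obtain $\mathbf{q}^w\mathbf{a}^w-\mathrm{Id}\in\mathcal{L}(\SSS'^*(\RR^d;V),\SSS^*(\RR^d;V))$, use \eqref{transitioofop} to translate the composition $\mathbf{q}^w\mathbf{a}^w$ into the matrix of compositions $(\sum_l \Op_{1/2}(q_{k,l})\Op_{1/2}(a_{l,j}))_{k,j}$. For each entry, Theorem \ref{weylq}(ii) combined with the subordination $\sum_l q_{k,l}\#a_{l,j}\sim \delta_{k,j}\mathbf{1}$ (which is the matrix form of $(\sum_j\mathbf{q}_j)\#\mathbf{a}=\mathbf{1}$, following from Proposition \ref{hyporingg} applied in the ring of formal series with values in $\mathcal{L}_b(V)$, or equivalently entry-by-entry) yields that the corresponding operator differs from $\delta_{k,j}\mathrm{Id}$ by a $*$-regularising operator on $\SSS^*(\RR^d)$. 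Lemma \ref{lemmaformatrixop} then combines these entry-wise $*$-regularising operators into a single element of $\mathcal{L}(\SSS'^*(\RR^d;V),\SSS^*(\RR^d;V))$, concluding the proof.
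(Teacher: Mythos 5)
Your proposal is correct and follows essentially the same route as the paper's proof: the inductive derivation of \eqref{estofpara11} from $\mathbf{q}_0\mathbf{a}=\mathbf{I}$ and the recursion (using submultiplicativity of $\|\cdot\|_{\mathcal{L}_b(V)}$ and Remark \ref{remfortheequivcinv}), the cut-off extension of $\mathbf{q}_0$, the reduction to matrix entries with Proposition \ref{subexistest} and Theorem \ref{weylq} to produce $\mathbf{q}=R(\sum_j\mathbf{q}_j)$ and the $*$-regularising remainder, and the determinant perturbation argument for $1/f$-ellipticity. The only cosmetic difference is that the paper invokes Theorem \ref{weylq} once to obtain $R$, the bounds $q^{(k,s)}_R\precsim 1/f$ and the composition formula simultaneously, and then checks by direct inspection that $\sum_{r}\bigl(\sum_j q^{(k,r)}_j\# a^{(r,s)}\bigr)_l=0$ for $l\geq1$ with the $l=0$ term equal to $\delta_{k,s}$ up to a compactly supported ultradifferentiable function, which is exactly your equivalence $\sum_l q_{k,l}\# a_{l,j}\sim\delta_{k,j}\mathbf{1}$.
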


\begin{proof} Remark \ref{remfortheequivcinv} proves (\ref{estofpara11}) for $j=0$ and $\alpha=0$; it also implies that $c'f(w)\leq \|\mathbf{a}(w)\|_{\mathcal{L}_b(V)}$, $\forall w\in Q^c_B$, for some $c'>0$. Let $\alpha\neq 0$. Differentiating  $\mathbf{q}_0(w)\mathbf{a}(w)=\mathbf{I}$, valid on $Q^c_B$ (where $\mathbf{I}$ is the identity operator on $V$), we have
\beqs
D^{\alpha}\mathbf{q}_0(w)=-\sum_{\substack{\beta\leq \alpha\\ \beta\neq 0}}{\alpha\choose\beta} D^{\alpha-\beta}\mathbf{q}_0(w)D^{\beta}\mathbf{a}(w)\mathbf{q}_0(w),\,\, w\in Q^c_B.
\eeqs
By induction on $|\alpha|$, employing the same technique as in the proof of \cite[Lemma 3.3]{CPP} one can prove (\ref{estofpara11}) for $j=0$, $\forall\alpha\in\NN^{2d}$. Now, by induction on $j$ and applying the same technique as in the proof of \cite[Lemma 3.4]{CPP} one can verify the validity of (\ref{estofpara11}) for all $j\in\NN$, $\alpha\in\NN^{2d}$.\\
\indent We extend $\mathbf{q}_0$ to an element of $\Gamma^{*,\infty}_{A_p,\rho}(\RR^{2d};\mathcal{L}_b(V))$ by modifying it on $Q_{B'}\backslash Q_B$, for some fixed $B'>B$. Notice that for each $R>B'$, $\mathbf{q}_R(w)=R(\sum_j \mathbf{q}_j)(w)$ belongs to $C^{\infty}(\RR^{2d};\mathcal{L}_b(V))$. Fix a basis $\{e_k\}_k$ for $V$ with $\{e'_k\}_k$ being its dual basis. Set
$$
q^{(k,s)}_j(w)=e'_k(\mathbf{q}_j(w)e_s),\, j\in\NN,\quad a^{(k,s)}(w)=e'_k(\mathbf{a}(w)e_s),\quad q^{(k,s)}_R(w)=e'_k(\mathbf{q}_R(w)e_s);
$$
clearly, $q^{(k,s)}_R=R(\sum_j q^{(k,s)}_j)$. The estimate (\ref{estofpara11}) implies that $\sum_j q^{(k,s)}_j\in FS_{A_p,\rho}^{*,\infty}(\RR^{2d};B)$ and $\{\sum_j q^{(k,s)}_j|\, k,s=1,\ldots,n\}\precsim 1/f$. Since $\{a^{(k,s)}|\, k,s=1,\ldots,n\}\precsim f$, Theorem \ref{weylq} gives the existence of $R>B'$ such that
\beq\label{vrstnp135}
\Op_{1/2}\left(R\left(\ssum q^{(k,s)}_j\# a^{(l,r)}\right)\right)=\Op_{1/2}\left(R\left(\ssum q^{(k,s)}_j\right)\right)\Op_{1/2}(a^{(l,r)})+T_{k,s,l,r}
\eeq
where $T_{k,s,l,r}$ is $*$-regularising; furthermore $q^{(k,s)}_R\in \Gamma^{*,\infty}_{A_p,\rho}(\RR^{2d})$ and $$\{q^{(k,s)}_R|\, k,s=1,\ldots,n\}\precsim_{1/f}\{\ssum q^{(k,s)}_j|\, k,s=1,\ldots,n\}$$ (cf. Proposition \ref{subexistest}) and consequently $q^{(k,s)}_R\precsim 1/f$. Incidently, this verifies that $\mathbf{q}_R\in \Gamma^{*,\infty}_{A_p,\rho}(\RR^{2d};\mathcal{L}_b(V))$ and $\mathbf{q}_R\precsim 1/f$. Since $|\det\mathbf{a}(w)|\leq c(f(w))^n$ we infer $|\det\mathbf{q}_0(w)|\geq c'/(f(w))^n$, $\forall w\in Q^c_B$.
Hence, $|\det\mathbf{q}_R(w)|\geq c''/(f(w))^n$, $\forall w\in Q^c_{B''}$, for some large enough $B''>B$.\footnote{There exists $\varepsilon=\varepsilon(n)>0$ such that for all $n\times n$ matrices $A$ whose all entries have modulus less then $\varepsilon$ it holds that $|\det (\mathbf{I}+A)|\geq 1/2$.} Consequently, $\mathbf{q}_R$ is $1/f$-elliptic. Notice that $\sum_{r=1}^n q^{(k,r)}_0a^{(r,s)}-\delta_{k,s}$ belongs to $\DD^{(A_p)}(\RR^{2d})$ in the $(M_p)$ case and in $\DD^{\{A_p\}}(\RR^{2d})$ in the $\{M_p\}$; here $\delta_{k,s}$ stands for the Kronecker delta. Furthermore, a direct inspection proves that for each $l\in\ZZ_+$ we have $\sum_{r=1}^n\left(\sum_j q^{(k,r)}_j\# a^{(r,s)}\right)_l=0$, for all $k,s=1,\ldots,n$. Thus, (\ref{vrstnp135}) together with (\ref{transitioofop}) proves that $\mathbf{q}_R^w\mathbf{a}^w-\mathrm{Id}$ belongs to $\mathcal{L}(\SSS'^*(\RR^d;V),\SSS^*(\RR^d;V))$.
\end{proof}

\begin{remark}
A direct consequence of this lemma is that if $\mathbf{a}$ is $f$-elliptic then $\mathbf{a}^w$ is globally regular, i.e. if $\mathbf{u}\in\SSS'^*(\RR^d;V)$ satisfies $\mathbf{a}^w\mathbf{u}\in\SSS^*(\RR^d;V)$ then $\mathbf{u}\in\SSS^*(\RR^d;V)$.
\end{remark}

\begin{remark}\label{rightparex}
If $f$ and $\mathbf{a}$ are as in Lemma \ref{parametrixvvalue} we can construct a right parametrix as follows. Define $\tilde{\mathbf{q}}_0(w)=\mathbf{a}(w)^{-1}$ on $Q^c_B$ and inductively, for $j\in\ZZ_+$,
\beqs
\tilde{\mathbf{q}}_j(x,\xi)=-\sum_{s=1}^j\sum_{|\alpha+\beta|=s} \frac{(-1)^{|\beta|}}{\alpha!\beta!2^s}\tilde{\mathbf{q}}_0(x,\xi) \partial^{\alpha}_{\xi} D^{\beta}_x \mathbf{a}(x,\xi) \partial^{\beta}_{\xi} D^{\alpha}_x \tilde{\mathbf{q}}_{j-s}(x,\xi),\,\, (x,\xi)\in Q^c_B,
\eeqs
In a completely analogous way as above, one can prove that $\tilde{\mathbf{q}}_j$, $j\in\NN$, satisfy (\ref{estofpara11}). Extending $\tilde{\mathbf{q}}_0$ to an element of $\Gamma^{*,\infty}_{A_p,\rho}(\RR^{2d};\mathcal{L}_b(V))$ (by modifying it on $Q_{B'}\backslash Q_B$, for $B'>B$), one can derive the existence of $R>B'$ such that $\tilde{\mathbf{q}}=R(\sum_j\tilde{\mathbf{q}}_j)\in \Gamma_{A_p,\rho}^{*,\infty}(\RR^{2d};\mathcal{L}_b(V))$ is $1/f$-elliptic and $\mathbf{a}^w\tilde{\mathbf{q}}^w-\mathrm{Id}\in \mathcal{L}(\SSS'^*(\RR^d;V),\SSS^*(\RR^d;V))$.
\end{remark}

\begin{corollary}\label{lemmaforinvindeforsp}
Let $f$ be $\Gamma_{A_p,\rho}^{*,\infty}$-admissible and $\mathbf{a}\in \Gamma_{A_p,\rho}^{*,\infty}(\RR^{2d};\mathcal{L}_b(V))$ be $f$-elliptic. Then, for any $\Gamma_{A_p,\rho}^{*,\infty}$-admissible function $f_1$, the operator $\mathbf{a}^w:H^*_{A_p,\rho}(f_1;V)\rightarrow H^*_{A_p,\rho}(f_1/f;V)$ is Fredholm and its index does not depend on $f_1$.
\end{corollary}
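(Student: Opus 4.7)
The plan is to prove both assertions by adapting to the matrix-valued setting the scalar arguments summarised in Section \ref{fredh}$(x)$, using Lemma \ref{parametrixvvalue} and Remark \ref{rightparex} in place of the scalar parametrix construction from Remark \ref{ktv957939}.

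First, for Fredholmness, I would apply Lemma \ref{parametrixvvalue} and Remark \ref{rightparex} to obtain left and right parametrices $\mathbf{q},\tilde{\mathbf{q}}\in\Gamma_{A_p,\rho}^{*,\infty}(\RR^{2d};\mathcal{L}_b(V))$, both $1/f$-elliptic, such that $\mathbf{q}^w\mathbf{a}^w-\mathrm{Id}$ and $\mathbf{a}^w\tilde{\mathbf{q}}^w-\mathrm{Id}$ belong to $\mathcal{L}(\SSS'^*(\RR^d;V),\SSS^*(\RR^d;V))$. The general mapping property noted just before Lemma \ref{parametrixvvalue} gives that $\mathbf{q}^w$ (resp. $\tilde{\mathbf{q}}^w$) maps $H^*_{A_p,\rho}(f_1/f;V)$ continuously into $H^*_{A_p,\rho}(f_1;V)$ (resp. the reverse). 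It remains to check that the two remainders are compact on the relevant Shubin-Sobolev spaces: via the matrix representation \eqref{transitioofop} and Lemma \ref{lemmaformatrixop}, this reduces to the scalar statement that any element of $\mathcal{L}(\SSS'^*(\RR^d),\SSS^*(\RR^d))$ is compact on every $H^*_{A_p,\rho}(g)$, which follows from Section \ref{fredh}$(iv)$ after realising such an operator as a Weyl quantisation whose symbol lies in $\SSS^*(\RR^{2d})$ (and hence majorises any admissible weight up to arbitrarily fast decay). Standard Fredholm theory then yields that $\mathbf{a}^w$ is Fredholm between the stated spaces.

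For the independence of the index from $f_1$, my approach mimics the scalar proof in Section \ref{fredh}$(x)$: I would establish
\[
\mathrm{ind}\,(\mathbf{a}^w_{|H^*_{A_p,\rho}(f_1;V)}) = \dim \ker \mathbf{a}^w - \dim \ker\,{}^t(\mathbf{a}^w),
\]
where both kernels are computed in $\SSS'^*$ of the appropriate $V$-valued or $V'$-valued space. By the global $\SSS^*$-regularity noted in the remark following Lemma \ref{parametrixvvalue}, we have $\ker(\mathbf{a}^w_{|H^*(f_1;V)})\subseteq\SSS^*(\RR^d;V)$, so this kernel is literally the same vector space $\{\mathbf{u}\in\SSS^*(\RR^d;V):\mathbf{a}^w\mathbf{u}=0\}$ for every admissible $f_1$. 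For the cokernel I would invoke the $V$-valued analogue of the duality from Section \ref{fredh}$(vi)$: under the pairing between $H^*(f_1/f;V)$ and $H^*(\tilde f/\tilde f_1;V')$ (with $\tilde f(x,\xi)=f(x,-\xi)$), the annihilator of $\mathrm{Im}(\mathbf{a}^w)$ equals $\ker({}^t\mathbf{a}^w_{|H^*(\tilde f/\tilde f_1;V')})$. Since ${}^t\mathbf{a}^w=\Op_{1/2}(\mathbf{a}(x,-\xi)^T)$, the transposed symbol is again $\tilde f$-elliptic, and the same regularity argument forces this kernel to sit inside $\SSS^*(\RR^d;V')$, independently of $f_1$. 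Hence both terms in the displayed index formula are independent of $f_1$.

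The main technical hurdle will be carefully setting up the $V$-valued versions of the scalar ingredients in Section \ref{fredh}---the duality pairing from $(vi)$, the mapping and compactness properties from $(iv)$, and the abstract Fredholm argument from $(x)$---together with the $V$-valued global regularity for ${}^t\mathbf{a}^w$. These reductions are in each case essentially bookkeeping via the coordinate trick \eqref{transitioofop} combined with Lemma \ref{lemmaformatrixop}, which expresses every matrix-valued statement as finitely many scalar-valued ones already established in Section \ref{fredh}.
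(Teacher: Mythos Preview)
Your proposal is correct and follows essentially the same approach as the paper: Fredholmness via the left and right parametrices from Lemma \ref{parametrixvvalue} and Remark \ref{rightparex}, and index independence by showing that both $\ker \mathbf{a}^w$ and $\ker {}^t(\mathbf{a}^w)=\ker \tilde{\mathbf{a}}^w$ (with $\tilde{\mathbf{a}}(x,\xi)={}^t\mathbf{a}(x,-\xi)$ being $\tilde f$-elliptic) lie in the respective $\SSS^*$ spaces by global regularity, hence are the same for every $f_1$. The paper is slightly terser on the compactness of the $*$-regularising remainders, but your extra justification via \eqref{transitioofop}, Lemma \ref{lemmaformatrixop}, and the fact that the symbol of a $*$-regularising operator lies in $\SSS^*(\RR^{2d})$ is exactly the routine unpacking of their ``direct consequence''.
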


\begin{proof} The fact that $\mathbf{a}^w$ restricts to a Fredholm operator from $H^*_{A_p,\rho}(f_1;V)$ to $H^*_{A_p,\rho}(f_1/f;V)$ for any $\Gamma_{A_p,\rho}^{*,\infty}$-admissible $f_1$ is a direct consequence of Lemma \ref{parametrixvvalue} and Remark \ref{rightparex}. For each $\Gamma_{A_p,\rho}^{*,\infty}$-admissible function $f_1$, denote by $\mathbf{A}_{f_1}$ the restriction of $\mathbf{a}^w$ to $H^*_{A_p,\rho}(f_1;V)$ with codomain $H^*_{A_p,\rho}(f_1/f;V)$. Then $\mathrm{ker}\,\mathbf{A}_{f_1}=\mathrm{ker}\,\mathbf{a}^w\subseteq\SSS^*(\RR^d;V)$. Clearly, $\mathrm{coker}\, \mathbf{A}_{f_1}$ is isomorphic to the kernel of the transposed ${}^t\mathbf{A}_{f_1}:(H^*_{A_p,\rho}(f_1/f;V))'\rightarrow (H^*_{A_p,\rho}(f_1;V))'$. Notice that ${}^t\mathbf{A}_{f_1}$ is the restriction of ${}^t(\mathbf{a}^w):\SSS'^*(\RR^d;V')\rightarrow \SSS'^*(\RR^d;V')$ to $(H^*_{A_p,\rho}(f_1/f;V))'$. For each $(x,\xi)\in\RR^{2d}$ define $\tilde{\mathbf{a}}(x,\xi)\in \mathcal{L}(V')$ as the transposed of $\mathbf{a}(x,-\xi):V\rightarrow V$. One easily verifies that $w\mapsto \tilde{\mathbf{a}}(w)$ belongs to $\Gamma^{*,\infty}_{A_p,\rho}(\RR^{2d};\mathcal{L}_b(V'))$ and is $\tilde{f}$-elliptic, where $\tilde{f}(x,\xi)=f(x,-\xi)$. By direct inspection one verifies that ${}^t(\mathbf{a}^w)(\boldsymbol{\varphi})=\tilde{\mathbf{a}}^w\boldsymbol{\varphi}$, $\forall \boldsymbol{\varphi}\in\SSS^*(\RR^d;V')$ (cf. (\ref{defofoperatv})). Hence ${}^t(\mathbf{a}^w)$ is the $\Psi$DO $\tilde{\mathbf{a}}^w$ whose kernel is a subset of $\SSS^*(\RR^d;V')$ since it is $\tilde{f}$-elliptic. This completes the proof of the corollary.
\end{proof}

Because of this corollary, from now on we will be cavalier and not mention the domain and codomain when speaking about the index of $\mathbf{a}^w$ with $f$-elliptic symbol $\mathbf{a}$.\\
\indent Before we prove the main theorem of this section, we need the following technical result.

\begin{lemma}
Let $g_0$ be a positive continuous function on $(0,\infty)$ and let $b_0$ be a smooth function on $(s,\infty)$ for some $s>0$ which satisfies the following estimate: for every $h>0$ there exists $C>0$ (resp. there exist $h,C>0$) such that
\beq\label{nkldcg135}
|\partial^k b_0(\lambda)|\leq Ch^kA_k g_0(\lambda)\langle \lambda\rangle^{-\rho k},\,\,\, \lambda>s,\,\, k\in\NN.
\eeq
Then the function $b:x\mapsto b_0(|x|)$ is smooth on $\{x\in\RR^d|\, |x|>s\}$ and it satisfies the following estimate: for every $h>0$ there exists $C>0$ (resp. there exist $h,C>0$) such that
\beq\label{vlkrtc157}
|\partial^{\alpha} b(x)|\leq Ch^{|\alpha|}A_{\alpha} g_0(|x|)\langle x\rangle^{-\rho |\alpha|},\,\,\, |x|>s,\,\, \alpha\in\NN^d.
\eeq
\end{lemma}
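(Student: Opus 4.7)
The plan is to combine two ingredients: (i) real-analyticity of $r(x):=|x|$ on $\{|x|>s\}$ with Cauchy-type bounds on its derivatives, and (ii) the multivariate Fa\`a di Bruno formula applied to $b=b_0\circ r$.

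First, I would establish that $r$ is real-analytic on $\{|x|>s\}$ and satisfies
$$
|\partial^\beta r(x)|\leq C_0 L_0^{|\beta|}|\beta|!\, |x|^{1-|\beta|},\quad |\beta|\geq 1,\ |x|>s,
$$
for some constants $C_0,L_0>0$. This follows because $r^2$ is a polynomial and $r(x)>s>0$ on the domain, so $r$ extends holomorphically to a polydisc of radius comparable to $|x|$ in $\mathbb{C}^d$; Cauchy's estimates then give the claim. Since $|x|>s$ also implies $\langle x\rangle\asymp_{s} |x|$, we may freely replace $|x|$ by $\langle x\rangle$ in what follows.

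Next, I would apply the multivariate Fa\`a di Bruno formula
$$
\partial^\alpha(b_0\circ r)(x)=\sum_{k=1}^{|\alpha|}\frac{(\partial^k b_0)(r(x))}{k!}\sum_{\substack{\alpha^1+\cdots+\alpha^k=\alpha\\ |\alpha^j|\geq 1}}\frac{\alpha!}{\alpha^1!\cdots\alpha^k!}\prod_{j=1}^k\partial^{\alpha^j}r(x),
$$
and substitute the two estimates. Factoring a $|x|^{k-|\alpha|}$ out of the product of $r$-derivatives and using $\rho\leq 1$ together with $|x|>s$, the surplus factor $|x|^{k-|\alpha|}$ is absorbed into $\langle x\rangle^{-\rho(|\alpha|-k)}$ (up to constants), which combines with the $\langle x\rangle^{-\rho k}$ coming from $\partial^k b_0$ to give the required $\langle x\rangle^{-\rho|\alpha|}$.

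It then remains to bound $\sum_{k=1}^{|\alpha|}\frac{h^k A_k}{k!}\sum \frac{\alpha!\prod_j|\alpha^j|!}{\prod_j\alpha^j!}L_0^{|\alpha|}$ by $C h'^{|\alpha|}A_{\alpha}$ for any prescribed $h'>0$ (resp. for some fixed $h'>0$). Here I would use: (a) the elementary bound $\prod_j|\alpha^j|!/\prod_j\alpha^j!\leq 1$ after re-indexing; (b) that the number of ordered compositions $\alpha^1+\cdots+\alpha^k=\alpha$ with $|\alpha^j|\geq 1$ is bounded by $k^{|\alpha|}$; (c) the relation $A_k\cdot |\alpha|!\leq c_0 H^{|\alpha|+k} A_{|\alpha|}$ coming from $(M.2)$ and $|\alpha|!\leq A_{|\alpha|}$ (which follows from $A_0=A_1=1$ and $(M.4)$, giving $p!\leq A_p$). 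Absorbing $k^{|\alpha|}$, $H^{|\alpha|+k}$ and $L_0^{|\alpha|}$ into a geometric factor and using that $h$ can be taken arbitrarily small (resp. is fixed) produces the desired estimate.

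The main technical obstacle is the combinatorial bookkeeping in step three: one must verify that the dependence on $k$ (arising from the $k^{|\alpha|}$ factor and from $A_k$) can be absorbed into a single $h'^{|\alpha|}A_\alpha$, with $h'$ of the correct character (arbitrarily small in the Beurling case, fixed in the Roumieu case). This is where $(M.2)$ for $A_p$, combined with $A_p\geq p!$, is essential, and it is the only non-routine ingredient beyond the Fa\`a di Bruno expansion.
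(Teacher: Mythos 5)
Your overall route is the same as the paper's: Cauchy estimates for the derivatives of $|x|$ obtained from the holomorphic extension of the square root, followed by the multivariate Fa\`a di Bruno formula, with the elementary reductions ($\prod_j|x|^{1-|\alpha^j|}=|x|^{k-|\alpha|}$, $\langle x\rangle\asymp_s|x|$ on $|x|>s$, and $\rho\le 1$) handled correctly. The problem is the combinatorial absorption step, precisely the one you flag as the only non-routine ingredient: as written, it does not close. First, (a) is backwards: $\prod_j|\alpha^j|!/\prod_j\alpha^j!\ge 1$ (each factor is a multinomial coefficient); the usable inequality is $\alpha!/(\alpha^1!\cdots\alpha^k!)\le |\alpha|!/(|\alpha^1|!\cdots|\alpha^k|!)$, which bounds each summand by $|\alpha|!$. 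Second, once you bound each summand by $|\alpha|!$ (or even by $\alpha!$) and then multiply by your count $k^{|\alpha|}$ from (b), you overshoot by a full factorial: at $k=|\alpha|$ the resulting factor $|\alpha|!\,k^{|\alpha|}/k!\approx e^{|\alpha|}|\alpha|!$ must be compared with the target $C^{|\alpha|}A_{|\alpha|}$, and the only lower bound available is $A_{|\alpha|}\ge|\alpha|!$, so a factor of order $|\alpha|!$ remains. One must either keep the multinomial weights (the inner sums of Fa\`a di Bruno carry exactly enough cancellation, as in the paper's appeal to Lemma 7.4 of the reference cited as [PP1], which bounds the whole weighted sum by $2^{(d+1)|\alpha|}$) or replace $k^{|\alpha|}$ by a genuinely geometric bound on the number of decompositions into \emph{nonzero} parts, e.g. $\prod_i\binom{\alpha_i+k-1}{k-1}\le 2^{(d+1)|\alpha|}$ using $k\le|\alpha|$. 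Third, (c) is false: with $k=|\alpha|$ it would force $|\alpha|!\le c_0H^{2|\alpha|}$; condition $(M.2)$ gives an upper bound for $A_{p+q}$, not the inequality you need. The correct ingredients are $(M.1)$ (log-convexity, giving $A_kA_{|\alpha|-k}\le A_{|\alpha|}$) together with $(|\alpha|-k)!\le A_{|\alpha|-k}$ and $|\alpha|!/k!\le 2^{|\alpha|}(|\alpha|-k)!$.

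There is also a gap specific to the Beurling case. Your closing remark that ``$h$ can be taken arbitrarily small'' does not produce the required $h'^{|\alpha|}$: in the Fa\`a di Bruno sum $h$ enters only as $h^k$, and $k$ can be $1$ (the $k=1$ term is $(\partial b_0)(|x|)\,\partial^\alpha|x|$, of size $\sim h\,|\alpha|!\,L_0^{|\alpha|}g_0(|x|)\langle x\rangle^{-\rho|\alpha|}$), so no smallness in $|\alpha|$ comes from $h$ alone. To obtain, for every $h'>0$, a bound $Ch'^{|\alpha|}A_{\alpha}$ one needs $p!\prec A_p$, i.e. $p!\,L^p\le C_LA_p$ for \emph{every} $L>0$; this follows from $(M.1)$ and $(M.3)'$ for $A_p$ (and is exactly how the paper proceeds, writing $h_1^{r}=h_1^{|\alpha|}h_1^{-(|\alpha|-r)}$ and absorbing $h_1^{-(|\alpha|-r)}(|\alpha|-r)!\le C'A_{|\alpha|-r}$), whereas your toolkit only invokes $p!\le A_p$ from $(M.4)$, which suffices for the Roumieu case but not for the Beurling one. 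So the analytic skeleton is right, but steps (a)--(c) need to be replaced by the correct multinomial inequality, a geometric (not $k^{|\alpha|}$) count, the $(M.1)$-based splitting $A_kA_{|\alpha|-k}\le A_{|\alpha|}$, and, in the Beurling case, the $p!\prec A_p$ redistribution argument.
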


\begin{proof} Clearly, $b$ is smooth on $\{x\in\RR^d|\, |x|>s\}$. The estimate in the lemma is trivial when $d=1$. When $d\geq 2$, the proof of the estimate relies on the Fa\`a di Bruno formula applied to the composition of $x\mapsto |x|$ and the real and imaginary part of $b_0$. Using the formulation as in \cite[Corollary 2.10]{faadib}, for $0\neq\alpha\in\NN^d$ and $|x|>s$, we infer
\beqs
|\partial^{\alpha}b(x)|\leq \sum_{r=1}^{|\alpha|}|(\partial^rb_0)(|x|)|\sum_{p(\alpha,r)}\alpha!\prod_{j=1}^{|\alpha|} \frac{\left|\partial^{\alpha^{(j)}}|x|\right|^{k_j}}{k_j! \left(\alpha^{(j)}!\right)^{k_j}},
\eeqs
where the set $p(\alpha,r)$ is as explained in \cite[Corollary 2.10]{faadib}. To derive a precise estimate on the derivatives of $|x|$, we consider the function $z\mapsto \sqrt{z_1^2+\ldots+z_d^2}$. It is analytic on $U=\{z\in\CC^d|\, |\mathrm{Re}\,z|>|\mathrm{Im}\,z|\}$ when we take the principal branch of the square root since $\mathrm{Re}\, (z_1^2+\ldots+z_d^2)>0$. For $0\neq x\in\RR^d$, we apply the Cauchy integral formula to this analytic function on the distinguished boundary of the polydisc $\{z\in\CC^d|\, |z_j-x_j|\leq |x|/(3\sqrt{d}),\, j=1,\ldots,d\}\subseteq U$ and a straightforward calculation yields
\beqs
|\partial^{\alpha}|x||\leq (3d)^{|\alpha|+1}\alpha! |x|^{1-|\alpha|},\,\,\, x\in\RR^d\backslash\{0\},\,\, \alpha\in\NN^d.
\eeqs
In the $(M_p)$ case, let $h>0$ be arbitrary but fixed and let $C_1>0$ be the constant for which (\ref{nkldcg135}) holds true with $h_1=h/(9\cdot 2^{d+1}d^2(1+s^{-1}))$. In the $\{M_p\}$ case, let $C_1,h_1>0$ be the constants for which (\ref{nkldcg135}) holds true. Since $\langle x\rangle\leq (1+s^{-1})|x|$ when $|x|>s$, we can estimate as follows
\beqs
|\partial^{\alpha}b(x)|&\leq& C_1\sum_{r=1}^{|\alpha|}h^r_1A_rg_0(|x|)\langle x\rangle^{-\rho r} \sum_{p(\alpha,r)}\alpha!\prod_{j=1}^{|\alpha|} \frac{(3d)^{|\alpha^{(j)}|k_j+k_j}|x|^{k_j-|\alpha^{(j)}|k_j}}{k_j!}\\
&\leq& C_1(3d(1+s^{-1}))^{|\alpha|}g_0(|x|)\langle x\rangle^{-\rho |\alpha|}\sum_{r=1}^{|\alpha|}(3dh_1)^rA_r \sum_{p(\alpha,r)}|\alpha|!\prod_{j=1}^{|\alpha|} \frac{1}{k_j!}\\
&=&\frac{C_1(9d^2(1+s^{-1})h_1)^{|\alpha|}g_0(|x|)}{\langle x\rangle^{\rho |\alpha|}} \sum_{r=1}^{|\alpha|}\frac{(|\alpha|-r)!A_r}{(3dh_1)^{|\alpha|-r}} {|\alpha|\choose r}\sum_{p(\alpha,r)} \frac{r!}{k_1!\cdot\ldots\cdot k_{|\alpha|}!}.
\eeqs
Because of  $(M.3)'$ for $A_p,$ there exists $C'>0$ such that $(3dh_1)^{-k}k!\leq C'A_k$, $\forall k\in\NN$. Thus, $(3dh_1)^{r-|\alpha|}(|\alpha|-r)!A_r\leq C'A_{\alpha}$. Now, \cite[Lemma 7.4]{PP1} gives
\beqs
\sum_{r=1}^{|\alpha|}{|\alpha|\choose r}\sum_{p(\alpha,r)} \frac{r!}{k_1!\cdot\ldots\cdot k_{|\alpha|}!}\leq 2^{|\alpha|(d+1)},
\eeqs
which yields the desired estimate (recall, $h_1=h/(9\cdot 2^{d+1}d^2(1+s^{-1}))$ in the $(M_p)$ case).
\end{proof}

Now we can state and prove the main result of this section.

\begin{theorem}\label{thminint}
Let $f$ be $\Gamma_{A_p,\rho}^{*,\infty}$-admissible and $\mathbf{a}\in\Gamma_{A_p,\rho}^{*,\infty}(\RR^{2d};\mathcal{L}_b(V))$ be $f$-elliptic. Assume that there exist a positive $f_0\in C(\RR)$ and $c,C>0$ such that
\beq\label{bfortheadmfun}
cf_0(|w|)\leq f(w)\leq Cf_0(|w|),\,\,\, \forall w\in\RR^{2d}.
\eeq
Then, for every $\Gamma_{A_p,\rho}^{*,\infty}$-admissible function $f_1$, $\mathbf{a}^w$ restricts to a Fredholm operator from $H^*_{A_p,\rho}(f_1;V)$ into $H^*_{A_p,\rho}(f_1/f;V)$ and its index does not depend on $f_1$. Furthermore,
\beq\label{intforml}
\mathrm{ind}\, \mathbf{a}^w=-\frac{(d-1)!}{(2d-1)!(2\pi i)^d}\int_{\partial \mathbb{B}^{2d}_s}\mathrm{tr}\,(\mathbf{a}^{-1}d\mathbf{a})^{2d-1},
\eeq
where $\partial \mathbb{B}^{2d}_s$ is the boundary of $\mathbb{B}^{2d}_s=\{w\in\RR^{2d}|\, |w|\leq s\}$ and $s\geq 2B$ is arbitrary. The orientation on $\partial \mathbb{B}^{2d}_s$ is the one induced by $\RR^{2d}$, where the latter is oriented by the nonvanishing $2d$-form $d\xi_1\wedge dx^1\wedge\ldots\wedge d\xi_d\wedge dx^d$.
\end{theorem}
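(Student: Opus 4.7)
The Fredholm property of $\mathbf{a}^w\colon H^*_{A_p,\rho}(f_1;V)\to H^*_{A_p,\rho}(f_1/f;V)$ and the independence of its index on $f_1$ follow immediately from Corollary~\ref{lemmaforinvindeforsp} applied in the $V$-valued setting. The substance of the theorem is therefore the integral formula (\ref{intforml}), and the plan is to reduce to the classical finite-order Fedosov--H\"ormander formula by a homotopy argument, exploiting crucially the radial comparison (\ref{bfortheadmfun}).

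Set $\omega=\mathbf{a}^{-1}d\mathbf{a}$ on $\{|w|>B\}$. The Maurer--Cartan relation $d\omega=-\omega\wedge\omega$ together with the cyclicity of the trace shows that $\mathrm{tr}\,\omega^{2d-1}$ is a closed $(2d-1)$-form, hence by Stokes the right-hand side of (\ref{intforml}) is independent of $s\ge 2B$ and depends only on the homotopy class of the map $\mathbf{a}|_{\partial\mathbb{B}^{2d}_s}\colon\mathbb{S}^{2d-1}\to GL(V)$. This is the topological side of the argument.

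Next I would construct a classical finite-order elliptic symbol $\mathbf{a}_1$ of some order $N$, lying in a standard Shubin class $\Gamma^{N}_{1}(\RR^{2d};\mathcal{L}_b(V))$, that satisfies $\mathbf{a}_1=\mathbf{a}$ on $\partial\mathbb{B}^{2d}_s$ and is invertible outside a compact set. The essential radiality encoded in (\ref{bfortheadmfun}) makes such an extension possible: one extends $\mathbf{a}|_{\partial\mathbb{B}^{2d}_s}$ along rays, multiplied by a smooth positive scalar factor of $|w|$ behaving like $|w|^N$ at infinity, and patches smoothly to $\mathbf{I}$ near the origin. Since $\mathbf{a}$ and $\mathbf{a}_1$ coincide on $\partial\mathbb{B}^{2d}_s$, the boundary integrals in (\ref{intforml}) for the two symbols are identical.

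It then remains to establish $\mathrm{ind}\,\mathbf{a}^w=\mathrm{ind}\,\mathbf{a}_1^w$, after which the classical Fedosov--H\"ormander formula \cite{fedosov1,fedosov2,hor-1} applied to the finite-order operator $\mathbf{a}_1^w$ closes the argument. For this equality I would build a continuous family $\{\mathbf{a}_t\}_{t\in[0,1]}$ of elliptic symbols interpolating between $\mathbf{a}$ and $\mathbf{a}_1$, each $f_t$-elliptic with respect to a suitable $\Gamma_{A_p,\rho}^{*,\infty}$-admissible $f_t$, and invoke the local constancy of the Fredholm index along such paths; the interpolation is performed in an annular region where both symbols are invertible, enlarging the growth rate continuously from $f$ to polynomial. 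This last step is the main obstacle of the argument: the deformation must be engineered so that Lemma~\ref{parametrixvvalue} produces parametrices with estimates uniform in $t$, guaranteeing that each $\mathbf{a}_t^w$ remains Fredholm on a common ambient space and that the Fredholm inverse depends continuously on $t$ in operator norm, thereby forcing the index to be constant along the homotopy.
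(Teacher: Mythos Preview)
Your reduction to a finite-order symbol via a homotopy $\{\mathbf{a}_t\}$ is a genuinely different route from the paper's, and the obstacle you flag at the end is real: once the growth of $\mathbf{a}_t$ varies with $t$, the operators $\mathbf{a}_t^w$ do not act between a fixed pair of Banach spaces, so the usual local constancy of the Fredholm index does not apply directly. You would have to either renormalise each $\mathbf{a}_t^w$ by composing with a $t$-dependent isomorphism back to a fixed space, or set up a parametrised Fredholm theory over a varying family of Shubin--Sobolev spaces. Neither is impossible, but neither is supplied, and ``uniform estimates in Lemma~\ref{parametrixvvalue}'' alone do not produce a norm-continuous path of Fredholm operators on a single space.

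The paper avoids this entirely by a multiplication trick rather than a homotopy. Using the radial comparison (\ref{bfortheadmfun}) together with the preceding technical lemma, one constructs a \emph{scalar} positive symbol $b\in\Gamma^{*,\infty}_{A_p,\rho}(\RR^{2d})$ that is $1/f$-elliptic and, crucially, radial for $|w|\ge s/2$. Then $\mathbf{b}^w=(b\mathbf{I})^w$ has index zero (it equals its formal adjoint), so $\mathrm{ind}\,\mathbf{a}^w=\mathrm{ind}\,\mathbf{b}^w\mathbf{a}^w$. The symbolic calculus gives $\mathbf{b}^w\mathbf{a}^w=(b\mathbf{a})^w+\text{compact}$, and $b\mathbf{a}$ is now a bounded symbol in the H\"ormander class $S(1,\langle w\rangle^{-2\rho}|dw|^2)$, so the classical Fedosov--H\"ormander formula applies to $(b\mathbf{a})^w$ directly. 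Finally, radiality of $b$ forces the pullback of $db$ to $\partial\mathbb{B}^{2d}_s$ to vanish, whence $(b\mathbf{a})^{-1}d(b\mathbf{a})=\mathbf{a}^{-1}d\mathbf{a}$ as forms on the sphere, and the integrals coincide.

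The comparison: the paper replaces your continuous deformation by a single algebraic step (composition with an index-zero operator), which buys exactly the avoidance of the varying-space issue. Your radial-extension idea for $\mathbf{a}_1$ is in the same spirit as the paper's construction of the radial $b$, but the paper applies it to a \emph{scalar} auxiliary symbol and uses multiplicativity of the index, which is technically much lighter than tracking Fredholmness along a path of matrix symbols with changing growth.
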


\begin{remark}
The assumption (\ref{bfortheadmfun}) says that $f$ is ``almost radial at infinity''.\\
\indent Of course, the orientation on $\RR^{2d}$ used in the integral formula is the canonical one when $\RR^{2d}$ is viewed as the cotangent bundle of $\RR^d$ and equipped with the canonical symplectic form $d(\sum_{j=1}^n \xi_j dx^j)=\sum_{j=1}^d d\xi_j\wedge dx^j$. This is the orientation used by H\"ormander \cite{hor-1} and Fedosov \cite{fedosov2}. On the contrary, in Fedosov \cite{fedosov1} $\RR^{2d}$ is supplied with the orientation induced by the standard orientation on $\CC^d$ by the identification of $(x,\xi)$ with $x+i\xi$, i.e. the orientation induced by
\beq\label{stfofcom}
dx^1\wedge d\xi^1\wedge\ldots\wedge dx^d\wedge d\xi^d;
\eeq
consequently, there is an extra factor of $(-1)^d$ in the integral formula. As pointed out by H\"ormander \cite[p. 422]{hor-1} and Fedosov \cite[p. 320]{fedosov3}, the above integral formula agrees with the one by Atiyah-Singer \cite[Theorem 2.12]{as3}; the orientation used there corresponds to using the orientation induced by (\ref{stfofcom}) in our case.
\end{remark}

\begin{proof} The facts that $\mathbf{a}^w:H^*_{A_p,\rho}(f_1;V)\rightarrow H^*_{A_p,\rho}(f_1/f;V)$ is Fredholm and that its index does not depend on $f_1$ follow from Corollary \ref{lemmaforinvindeforsp}. It remains to prove the integral formula. Clearly, it is enough to prove it for $(a^w_{k,j})_{k,j}$, where $(a^w_{k,j})_{k,j}$ is the matrix representation of $\mathbf{a}^w$ with respect to a fixed basis $\{e_k\}_k$ of $V$ and its dual basis $\{e'_k\}_k$ of $V'$ (cf. (\ref{transitioofop})). To make the notation less cumbersome, we still denote the matrix of symbols $(a_{k,j})_{k,j}$ by $\mathbf{a}$ and the operator matrix $(a^w_{k,j})_{k,j}$ by $\mathbf{a}^w$. Furthermore, for any matrix of symbols $\mathbf{b}=(b_{k,j})_{k,j}$ and any $\Gamma_{A_p,\rho}^{*,\infty}$-admissible function $g$, the notation $\mathbf{b}\precsim g$ will just mean $\{b_{k,j}|\, k,j=1,\ldots,n\}\precsim g$. Thus, in this notation, we still have $\mathbf{a}\precsim f$.\\
\indent Define $\tilde{f}:\RR^{2d}\rightarrow (0,\infty)$ by $\tilde{f}(w)=f_0(|w|)$. Clearly $\tilde{f}$ is positive and continuous and because of (\ref{bfortheadmfun}), both $\tilde{f}$ and $1/\tilde{f}$ are of ultrapolynomial growth of class $*$. Of course, $\det\mathbf{a}$ is $\tilde{f}^n$-elliptic and thus $\tilde{f}$ is $\Gamma_{A_p,\rho}^{*,\infty}$-admissible (cf. Lemma \ref{lskvpc135}).\\
\indent By the same technique as in the proof of Lemma \ref{lskvpc135}, one can find a $1/\tilde{f}$-elliptic positive symbol $\tilde{b}\in\Gamma_{A_p,\rho}^{*,\infty}(\RR^{2d})$. Then $b_0:\lambda\mapsto \tilde{b}(\lambda,0,\ldots,0)$ satisfies (\ref{nkldcg135}) on $(0,\infty)$ with $1/f_0$ in place of $g_0$. Consequently, $b:w\mapsto b_0(|w|)$ satisfies (\ref{vlkrtc157}) when $|w|>s/4$ with $1/f_0$ in place of $g_0$, where $s>0$ is arbitrary but fixed. We modify $b$ on $\{w\in\RR^{2d}|\, |w|<s/2\}$ so that it becomes positive $1/\tilde{f}$-elliptic symbol in $\Gamma_{A_p,\rho}^{*,\infty}(\RR^{2d})$; equivalently, $b$ is $1/f$-elliptic. The constructed $b$ has the additional property of being a radial function when $|w|\geq s/2$. This will become important later. Because of Corollary \ref{lemmaforinvindeforsp}, $\mathbf{a}^w$ restricts to a Fredholm operator $(L^2(\RR^d))^n\rightarrow (H^*_{A_p,\rho}(1/f))^n$ and $\mathbf{b}^w=(b\mathbf{I})^w$ restricts to a Fredholm operator $(H^*_{A_p,\rho}(1/f))^n\rightarrow (L^2(\RR^d))^n$, where $\mathbf{I}$ stands for the $n\times n$ identity matrix. The $(k,j)$-entry in the operator matrix of $\mathbf{b}^w\mathbf{a}^w$ is $b^wa^w_{k,j}$ which, in view of Theorem \ref{weylq}, is equal to $(R(b\# a_{k,j}))^w+T_{k,j}$ for some large enough $R>0$ and a $*$-regularising operator $T_{k,j}$. Since $R(b\# a_{k,j})=ba_{k,j}+a'_{k,j}$ with $a'_{k,j}\in \Gamma_{A_p,\rho}^{*,\infty}(\RR^{2d})$ satisfying $a'_{k,j}\precsim \langle \cdot\rangle^{-2\rho}$, we infer $\mathbf{b}^w\mathbf{a}^w=(b\mathbf{a})^w+\mathbf{a}'^w+\mathbf{T}$ where $\mathbf{T}=(T_{k,j})_{k,j}$ and $\mathbf{a}'=(a'_{k,j})_{k,j}$. As $\mathbf{a}'\precsim \langle \cdot\rangle^{-2\rho}$, $\mathbf{a}'^w$ restricts to a compact operator $(L^2(\RR^d))^n\rightarrow (L^2(\RR^d))^n$ (cf. Lemma \ref{lemmaformatrixop}). Clearly, $b\mathbf{a}\precsim 1$ and $\det (b\mathbf{a})$ is $1$-elliptic. Hence, Corollary \ref{lemmaforinvindeforsp} yields that it is a Fredholm operator $(L^2(\RR^d))^n\rightarrow (L^2(\RR^d))^n$. Since $\mathbf{b}^w$ coincides with its formal adjoint and ${}^t(\mathbf{b}^w)\boldsymbol{\varphi}= \overline{\mathbf{b}^w\overline{\boldsymbol{\varphi}}}$, $\boldsymbol{\varphi}\in (\SSS^*(\RR^d))^n$, it follows $\mathrm{ind}\, \mathbf{b}^w=0$ (cf. the proof of Corollary \ref{lemmaforinvindeforsp}). The properties of the index give $\mathrm{ind}\, \mathbf{a}^w=\mathrm{ind}\, \mathbf{b}^w\mathbf{a}^w=\mathrm{ind}\, (b\mathbf{a})^w$. This reduces the problem to finite order operators and we can use the Fedosov-H\"ormander integral formula \cite[Theorem 7.3]{hor-1} (see also Fedosov \cite{fedosov1,fedosov2}) to conclude
\beqs
\mathrm{ind}\, \mathbf{a}^w=\mathrm{ind}\, (b\mathbf{a})^w=-\frac{(d-1)!}{(2d-1)!(2\pi i)^d}\int_{\partial \mathbb{B}^{2d}_s}\mathrm{tr}\,((b\mathbf{a})^{-1}d(b\mathbf{a}))^{2d-1},
\eeqs
where $s\geq 2B$ is arbitrary and $\mathbb{B}^{2d}_s=\{w\in\RR^{2d}|\ |w|\leq s\}$; the orientation on $\partial \mathbb{B}^{2d}_s$ is the one described in the statement of the theorem. (The assumptions of \cite[Theorem 7.3]{hor-1} are fulfilled since $b\mathbf{a}$ belongs to the H\"ormander class $S(1,g)$ with $g$ being the $\sigma$-temperate metric $g_w=\langle w\rangle^{-2\rho}|dw|^2$ with symplectic dual $g^{\sigma}_w=\langle w\rangle^{2\rho}|dw|^2$; of course, $(b\mathbf{a})^{-1}$ exists and is bounded when $|w|\geq s$.) As $b$ is radial when $|w|\geq s/2$, the pullback of $db$ to $\partial \mathbb{B}^{2d}_s$ vanishes. Hence $(b\textbf{a})^{-1}d(b\textbf{a})=\textbf{a}^{-1}d\textbf{a}$ as forms on $\partial \mathbb{B}^{2d}_s$ and the validity of (\ref{intforml}) easily follows.
\end{proof}

\begin{remark}
If $f$ is $\Gamma_{A_p,\rho}^{*,\infty}$-admissible functions which satisfies (\ref{bfortheadmfun}), $a\in \Gamma_{A_p,\rho}^{*,\infty}(\RR^{2d})$ is $f$-elliptic symbol and $d\geq 2$ then (\ref{intforml}) implies $\mathrm{ind}\, a^w=0$. When $d=n=1$, as pointed out by H\"ormander \cite[p. 422]{hor-1}, the index of $a^w$ is the winding number of $a$ considered as a map from $\partial \mathbb{B}^2_s$ into $\CC\backslash\{0\}$.
\end{remark}

\end{document}